\documentclass{amsart}

\usepackage{amsmath}
\usepackage{amsthm}
\usepackage{enumerate}
\usepackage[shortlabels]{enumitem}
\usepackage{amssymb}
\usepackage[utf8]{inputenc}
\usepackage[colorlinks  = true,
            citecolor   = blue,
            linkcolor   = blue,
            bookmarks   = true,
            linktocpage = true]{hyperref}
\usepackage[all]{xy}
\usepackage{tikz-cd}
\usepackage[textsize=scriptsize,color=yellow]{todonotes}


\title{A Bivariant Theory for the Cuntz Semigroup}

\author{Joan Bosa}
\address{School of Mathematics and Statistics, University of Glasgow, 15 University Gardens, G12 8QW, Glasgow, UK}
\email{joan.bosa@glasgow.ac.uk}
\email{g.tornetta.1@research.gla.ac.uk}
\email{joachim.zacharias@glasgow.ac.uk}

\author{Gabriele Tornetta}

\author{Joachim Zacharias}

\thanks{\emph{Supported by:}  EPSRC Grant EP/I019227/2. The first author holds a Beatriu de Pinos Fellowship (BP-A 00123)}
\subjclass[2010]{Primary 46L10, 46L35; Secondary 06F05, 19K14, 46L30, 46L80}

\date{\today}



\newcommand{\A}[1]{#1}
\newcommand{\Csalg}{C$^*$-algebra}
\newcommand{\Cs}{C$^*$}

\newcommand{\bb}[1]{\mathbb{#1}}
\newcommand{\IN}{\bb N}

\newcommand{\CC}{\bb C}

\newcommand{\dd}{^{**}}

\newcommand{\id}{\operatorname{id}}
\newcommand{\Id}{\operatorname{Id}}
\newcommand{\hoplus}{\operatorname{{\hat{\oplus}}}}
\newcommand{\SOT}{\text{\normalfont\scshape sot}}
\newcommand{\supp}{\operatorname{supp}}
\newcommand{\rk}{\operatorname{rk}}
\newcommand{\ev}{\operatorname{ev}}
\newcommand{\Lf}{\operatorname{Lf}}
\newcommand{\Mf}{\operatorname{Mf}}

\newcommand{\Ad}{\operatorname{Ad}}

\newcommand{\cc}{\subset\hskip-.5em\subset}

\newcommand{\cue}{\sim_{\text{Cu}}}

\newcommand{\au}{\approx_{\text{a.u.}}}

\newcommand{\cat}[1]{\text{\normalfont\sffamily #1}}

\newcommand{\mor}[1]{\operatorname{#1}}

\newcommand{\Hom}{\mor{Hom}}

\newcommand{\Cu}{\cat{Cu}}
\newcommand{\op}{^{\operatorname{op}}}
\newcommand{\loc}{_{\text{\sffamily loc}}}

\newcommand{\seq}[1]{\left\{{#1}_n\right\}_{n\in\IN}}
\newcommand{\norm}[1]{\left\Vert{#1}\right\Vert}

\newcommand{\eps}{\epsilon}
\newcommand{\Mi}{{M_\infty}}
\newcommand{\ess}{\text{\upshape ess}}
\newcommand{\Tau}{\text{\upshape T}}


\theoremstyle{plain}
\newtheorem{lemma}{Lemma}[section]
\newtheorem{theorem}[lemma]{Theorem}
\newtheorem{corollary}[lemma]{Corollary}
\newtheorem{proposition}[lemma]{Proposition}
\newtheorem*{proposition*}{Proposition}
\newtheorem*{theorem*}{Theorem}
\newtheorem*{definition*}{Definition}
\newtheorem*{claim*}{Claim}

\theoremstyle{definition}
\newtheorem{definition}[lemma]{Definition}
\newtheorem{example}[lemma]{Example}

\theoremstyle{remark}
\newtheorem{remark}[lemma]{Remark}

\newcommand{\e}{\epsilon}
\renewcommand{\_}{{\ \cdot\ }}


\newcommand{\sth}{\cite[Theorem 2.3]{wz2009}}

\begin{document}

	\maketitle
	\begin{abstract}
We introduce a bivariant version  of the Cuntz
semigroup  as equivalence classes of order zero maps generalizing the ordinary Cuntz semigroup. The theory has many 
properties formally analogous to KK-theory including a composition product.
We establish basic properties, like additivity, stability and continuity, 
and study categorical aspects in the setting of local \Cs-algebras. 
We determine the bivariant Cuntz semigroup for numerous examples such as when the second algebra is a Kirchberg algebra, and Cuntz homology for compact 
Hausdorff spaces which provides a complete invariant. 
Moreover, we establish identities when tensoring with  strongly
self-absorbing \Cs-algebras. Finally, we show
that the bivariant Cuntz semigroup of the present work can be used to classify
all unital and stably finite \Cs-algebras.

	\end{abstract}

	\setcounter{tocdepth}{1}
	\tableofcontents
\section{Introduction}

The Cuntz semigroup was introduced in the pioneering work \cite{cuntz78} of Joachim Cuntz in the 1970's as a \Cs-analogue of the Murray-von Neumann semigroup of projections in von Neumann algebras, 
replacing equivalence classes of projections by suitable equivalence classes of positive elements in the union of all matrix iterations of the algebra.  In a suitable sense, it may be regarded as a refinement of the $K_0$-group, since projections are particular positive elements. It is designed as a tool for studying algebras with traces, and for those algebras it is typically a very rich object (see \cite{ce} for further details). 
Therefore, it is not surprising that there has been a renewed and growing recent interest in the Cuntz semigroup as a classification tool
within Elliott's Classification Programme which aims at classifying simple nuclear \Cs-algebras using K-theoretic and trace space data, information which the Cuntz semigroup contains. 
Moreover, Toms' famous examples of non-isomorphic simple nuclear \Cs-algebras with the same Elliott invariant can be distinguished by their Cuntz semigroups \cite{toms}.
In a subsequent work, Perera and Toms formulated a classification conjecture in the setting of the much larger class of simple
exact \Cs-algebras using the Cuntz semigroup \cite{pereratoms}. Currently, no counterexamples to this conjecture are known. Thus, the Cuntz semigroup is a strong potential tool for classification, within and beyond the Elliott Programme.

In recent years research on the Cuntz semigroup intensified at different levels (see e.g. \cite{ABPP14, ERS11, robert}). In \cite{cei2008}, the lack of continuity of the original Cuntz semigroup functor $W(\_)$ is resolved by using its stabilized version and changing the target category slightly. More precisely, a richer category of abstract monoids, called $\Cu$, is defined, and it is shown that all Cuntz semigroups of stable algebras $\Cu(A)=W(A\otimes K)$ belong to it. Note that the role of stabilization and the Cuntz semigroup was clarified (\cite{ABP11}) defining a ``completion'' on $W(A)$. In this new setting, the functor $\Cu(\_)$ is sequentially continuous, which is shown by an alternative description as classes of Hilbert modules. 
Expanding on this line of research, Antoine, Perera and Thiel have recently shown in \cite{apt2014} that the functor $W(\_)$ is continuous under arbitrary inductive limits, provided that both, the domain and codomain categories are suitably chosen. In particular, they consider the category of {\it local}\Cs-algebras as the domain and a new category, called $\cat W$, as codomain.

The main purpose of the present paper is to introduce and study a bivariant version of the Cuntz semigroup. Our main motivation stems from Kasparov's bivariant KK-theory. This theory contains K-theory and K-homology as special cases. Moreover, the Kasparov product is a central and very powerful tool, leading together with the great flexibility of KK-theory to various far reaching applications in \Cs-algebra theory as well as geometry and topology. KK-theory is also of key importance in the Classification Programme. For instance, the Kirchberg-Phillips classification for purely infinite \Cs-algebras can be regarded as an entirely KK-theoretical result asserting that two Kirchberg algebras are isomorphic if and only if they are KK-equivalent, i.e. there exists an invertible element in the corresponding KK-group (\cite{KP002,KP00}).
It can be reasonably expected that the bivariant Cuntz semigroup should play a similar role for 
the classification of stably finite \Cs-algebras and indeed we provide a classification theorem along those lines in section 6 where we prove that unital stably finite \Cs-algebras are isomorphic if and only if there exists a (strictly) invertible element in the bivariant Cuntz semigroup. 
This implies the known fact that the ordinary Cuntz semigroup provides a complete invariant for a certain class of \Cs-algebras such as AI-algebras and  inductive limits of one-dimensional non-commutative CW complexes \cite{robert}. 
  
The definition we propose is based on equivalence classes of completely positive maps of order zero, c.p.c. order zero for short. These maps, which could also be called orthogonality preserving, were introduced and fully characterized in \cite{wz2009}, by Winter and the third named author based on previous related concepts. For instance $*$-homomorphisms are c.p.c. order zero maps.  It is shown in \cite{wz2009} that this class of maps induce maps between the Cuntz semigroups and trace spaces of the corresponding algebras (cf. \cite[Corollary 3.5]{wz2009}). 
Just like in KK-theory, the resulting bivariant Cuntz semigroup contains the ordinary Cuntz semigroup by specializing the first variable to $\CC$. Specializing the second variable to $\CC$ leads to a contravariant functor which we term \emph{Cuntz homology}.

Our bivariant theory admits  a product given essentially by composition and can be regarded as a refinement of KK-theory: the classes of projections in the ordinary Cuntz semigroup correspond in the bivariant setting to classes of $*$-homomorphisms from the first algebra to the stabilization of the second. The subsemigroup given by those appears as an unstable version of KK-theory. Moreover,  we show some functorial properties for the bivariant Cuntz semigroup, analogous to the ones that the  ordinary  Cuntz semigroup possesses, and  establish further properties analogous to those of KK-theory, like additivity, functoriality, continuity etc. Next we investigate which categorical setting is most
appropriate for our theory. To this end, we show that every bivariant Cuntz semigroup for a pair of \emph{local} \Cs-algebras is an object of the category $\cat W$ considered in \cite{apt2014}. Continuity, however,
appears to remain a special feature of the ordinary  Cuntz semigroup, since the bifunctor that emerges from our setting seems to be continuous only in very special cases. This, indeed, opens up for a Cuntz analogue of the notion of
KK-semiprojectivity of \cite{dadarlat}, but this aspect is not addressed in the present paper.

Alongside with the analogue of the ordinary definition of the Cuntz semigroup we
also give a bivariant extension of the Hilbert module picture described in \cite{cei2008}. This has the bearings of Kasparov cycles for KK-theory, but with a more
suitable set of axioms to accommodate the different nature of the
equivalence relation. It is then a natural question whether the bivariant object
that arises this way is an element of the category $\Cu$. To answer this
question affirmatively we would have to show, among other properties, that the stabilized
bivariant Cuntz semigroup is closed under suprema. We do not study this question in these notes, but a possible approach to it could be provided by a generalization of the open projection picture
for the Cuntz semigroup described in \cite{ort, btzop}.

The explicit computation of a Cuntz semigroup turns out to be a rather hard
problem, and the bivariant theory discussed here, being an extension of the
former, appears even harder to determine. There are two special cases though where such a computation
can be done by exploiting some fundamental results: when the codomain is a Kirchberg algebra 
and the domain is an exact \Cs-algebra, and when the first algebra is commutative and the second is just the set of scalars $\CC$ (Cuntz homology).
Moreover, we prove an absorption result for  strongly self-absorbing \Cs-algebras which is very useful in obtaining general identities for the bivariant Cuntz semigroup.


\subsection*{Outline of the paper} The present work is organized as follows. \textbf{Section 1} provides an introduction to the motivations behind this paper
and a quick overview of the results, alongside this outline and a list
of notation.

In \textbf{Section 2} we give the main definitions that constitute the bivariant
theory of the Cuntz semigroup that we are  presenting. In order to do so we
need to extend some well-known results concerning c.p.c. order zero maps to the
setting of local \Cs-algebras in the sense of \cite{apt2014}, together with some
other technical results that are used throughout. These are employed to
investigate the properties of additivity, functoriality and stability of the
bifunctor introduced within this section. Following the lead of
\cite{cei2008} we also define a stabilized version of the bivariant Cuntz
semigroup together with an equivalent module picture that closely resembles
Kasparov's formulation of KK-theory.

\textbf{Section 3} is  devoted to some further categorical aspects. We
investigate whether the bivariant Cuntz semigroup as defined in the previous
section is an object of the category $\cat W$ described in \cite{apt2014} and
study any possible continuity properties. 

In \textbf{Section 4} we introduce the analogue of Kasparov's product for the
bivariant Cuntz semigroup together with the resulting notion of Cuntz equivalence
between \Cs-algebras.

In  \textbf{Section 5}
we determine our explicit examples of bivariant Cuntz semigroups.
We show that if the first variable is an exact \Cs-algebra and the second is a Kirchberg algebra 
then the bivariant Cuntz semigroup  is isomorphic to the two-sided ideal
lattice of the algebra in the first argument. This generalizes the well-known
result that the Cuntz semigroup of a Kirchberg algebra is $\{0,\infty\}$, i.e.
the two-sided ideal lattice of $\CC$.
Next we consider the class of strongly
self-absorbing \Cs-algebras, which were studied systematically in \cite{tw} and
play an important r\^ole in the Classification Programme. We study the behavior of the bivariant Cuntz semigroup when
its arguments are tensored by such \Cs-algebras. An isomorphism theorem then allows
to identify certain bivariant Cuntz semigroups with ordinary ones. Finally, using spectral theory we obtain a description of
Cuntz homology for compact
metrizable Hausdorff spaces. This object turns out to be a complete invariant
for such spaces.

In \textbf{Section 6} we use the composition product and the notion of strict Cuntz equivalence, which involves a scale condition, to provide a classification result for the bivariant Cuntz semigroup. Specifically, we show that any two unital and stably finite
\Cs-algebras are isomorphic if and only if they are \emph{strictly}
Cuntz-equivalent.


\subsection*{Notation} In this paper we have employed standard notation whenever possible. However, for
the reader's convenience, we provide a brief summary of some possibly
non-standard notation that has been employed throughout the present work.

\begin{enumerate}[align=parleft]
\item[$\Delta$]
Diagonal map $\Delta : A \to A\oplus A$. It embeds $A$ into $A\oplus
A$ \emph{diagonally} by $\Delta(a) := a\oplus a$, for any $a\in A$.

\item[$\oplus$]
Direct sum. For two maps $\phi:A\to B$ and $\psi:C\to
D$ we have $\phi\oplus\psi :A\oplus C\to B\oplus D$ defined as
$(\phi\oplus\psi)(a\oplus c) := \phi(a)\oplus\psi(c)$.

\item[$\hoplus$]
Direct sum of maps precomposed with $\Delta$. For maps
$\phi:A\to B$ and $\psi:A\to C$, $\phi\hoplus\psi:A\to B\oplus C$
is given by $(\phi\oplus\psi)\circ\Delta$.

\item[$\Subset$]
Finite subset of.

\item[$\phi^{(n)}$]
$n$-th ampliation of the map $\phi$. That is, $\phi\otimes\id_{M_n}$ for any $n\in\IN\cup\{\infty\}$.

\item[$K$] The \Cs-algebra of compact operators on a infinite-dimensional separable Hilbert space.

\end{enumerate}

\subsection*{Acknowledgement} The authors would like to thank Francesc Perera, Hannes Thiel, Stuart White and  Wilhelm Winter as well as many other colleagues for many helpful discussions. We are particularly grateful to Wilhelm for providing us with some unpublished notes on the subject of the present article.


\section{Definitions and Properties}

In this work we make use of the notion of local \Cs-algebras in the sense
of \cite{apt2014}, i.e. a pre-\Cs-algebra $A$ is \emph{local} if there is an
arbitrary family of \Cs-subalgebras $\{A_i\}_{i\in I}$ of $A$ with the property
that for any $i,j\in I$ there is $k\in I$ such that $A_i,A_j\subset A_k$ and
$A=\bigcup_{i\in I} A_i$. Equivalently a \Cs-algebra $A$ is local if the
\Cs-algebra generated by a finite subset $F$ of $A$ in the completion of $A$ is
contained in $A$. In particular, every local \Cs-algebra in this sense is closed
under continuous functional calculus of its normal elements. The reason why we
want to consider such a structure is because we make use of the infinite matrix
algebra $M_\infty(A)$ over a \Cs-algebra $A$ throughout, which is a typical
example of a local \Cs-algebra.

At this point we make the blanket assumption that all the \Cs-algebras we
consider are separable, unless otherwise stated.

\begin{proposition}\label{prop:cpclocal} Let $A$ and $B$ be local \Cs-algebras,
$\tilde A$ and $\tilde B$ their respective completions, and $\phi:A\to B$ be a
c.p.c. order zero map. Then there exists a unique c.p.c. order zero extension
$\tilde\phi:\tilde A\to\tilde B$ of $\phi$.
\end{proposition}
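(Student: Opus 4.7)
The plan is to first extend $\phi$ by continuity and then verify that the extension inherits the c.p.c.\ and order zero properties. Uniqueness is automatic from continuity and the density of $A$ in $\tilde A$.

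For existence: since any c.p.c.\ map between pre-\Cs-algebras is a contraction, $\phi$ extends uniquely to a linear contraction $\tilde\phi\colon\tilde A\to\tilde B$. For complete positivity I would first observe that $M_n(A)_+$ is dense in $M_n(\tilde A)_+$: given $a\in M_n(\tilde A)_+$, approximate $a^{1/2}\in M_n(\tilde A)$ by $x_k\in M_n(A)$, so that $x_k^*x_k\in M_n(A)_+$ converges to $a$. Since $\phi^{(n)}$ sends $M_n(A)_+$ into $M_n(\tilde B)_+$ and the latter cone is closed, the extension $\tilde\phi^{(n)}$ is positive for every $n$.

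The main step is preservation of the order zero condition. Given $a,b\in\tilde A_+$ with $ab=0$, using the filtered union $A=\bigcup_{i\in I}A_i$ together with closure of each $A_i$ under continuous functional calculus, one finds, for any $\epsilon>0$, a single index $i$ and elements $a',b'\in(A_i)_+$ with $\|a-a'\|,\|b-b'\|<\epsilon$; hence $\|a'b'\|\to 0$ as $\epsilon\to 0$. Viewing $\phi|_{A_i}$ as a c.p.c.\ order zero map from the \Cs-algebra $A_i$ into $\tilde B$, the Winter--Zacharias structure theorem \sth\ furnishes a $*$-homomorphism $\pi_i\colon A_i\to M(C^*(\phi(A_i)))$ and a positive contraction $h_i$ commuting with $\pi_i(A_i)$ such that $\phi(x)=h_i\pi_i(x)$ for all $x\in A_i$. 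Consequently,
\[
\phi(a')\phi(b')=h_i\pi_i(a')h_i\pi_i(b')=h_i^2\pi_i(a'b'),
\]
so $\|\phi(a')\phi(b')\|\le\|a'b'\|\to 0$, and therefore $\tilde\phi(a)\tilde\phi(b)=0$.

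The main obstacle is precisely this last step, since the Winter--Zacharias structure theorem is formulated for \Cs-algebras rather than pre-\Cs-algebras. The filtered union structure built into the definition of a local \Cs-algebra is what bridges this gap, allowing the theorem to be invoked on each $A_i$ and the resulting norm estimate to be passed to the limit in $\tilde B$.
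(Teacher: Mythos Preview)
Your proposal is correct and follows essentially the same approach as the paper: extend $\phi$ by continuity, then verify the order zero property by restricting to a \Cs-subalgebra of $A$ containing the approximants, applying the Winter--Zacharias structure theorem there to obtain $\phi(a')\phi(b')=h\phi(a'b')$, and passing to the limit. The only cosmetic differences are that the paper uses the finite-generation characterization of locality (taking $A_n=C^*(a_n,b_n)\subset A$) rather than the directed-family characterization, and that you spell out the complete positivity of the extension while the paper takes it as given.
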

\begin{proof} Let $\tilde\phi$ be the c.p.c. extension of $\phi$ to the
completions. One needs to check that orthogonality of elements in the completion
is preserved by $\tilde\phi$. For any pair of positive contractions
$a,b\in\tilde A^+$ with the property $ab = 0$, take sequences $\seq a,\seq
b\subset A_1$, the unit ball of $A$, with the property that $a_n\to a$ and
$b_n\to b$. Because $A$ is a local \Cs-algebra, the \Cs-algebras $A_n$ generated
by $\{a_n,b_n\}$ inside $\tilde A$ are contained in $A$; hence, one can consider
the restrictions $\phi_n:=\phi|_{A_n}$. These are c.p.c. order zero maps over
\Cs-algebras. By the structure theorem \sth , there are positive contractions
$h_n$ and $*$-homomorphisms $\pi_n$ such that $\phi_n(a) = h_n\pi_n(a)$ for any
$a\in A_n$ and $n\in\IN$. By construction $\tilde\phi$ extends each $\phi_n$, so one has the identity
	$$\tilde\phi(a_n)\tilde\phi(b_n) = h_n\tilde\phi(a_nb_n).$$
By the joint continuity of the norm and the boundedness of $\tilde\phi$, one then gets
	$$\tilde\phi(a)\tilde\phi(b) = 0,$$
which shows that the extension $\tilde\phi$ preserves orthogonality of positive
elements and, therefore, has the order zero property.
\end{proof}

\begin{remark} We sometimes refer to the $*$-homomorphism arising from a
c.p.c. order zero map $\phi$ through \sth\ as the \emph{support
$*$-homo\-mor\-phism} of $\phi$. It will usually be denoted by $\pi_\phi$. Note that in a representation $\phi (a)  = h \pi (a)$, the element $h$ and the homomorphism $\pi$ are not always unique but they are uniquely determined by requiring that the support projections of $\phi$, $\pi$ and  $h$  are all equal. Often we will assume that tacitly.
\end{remark}

From the above result one can conclude that the structure theorem \cite[Theorem
2.3]{wz2009} generalizes to c.p.c. order zero maps between local \Cs-algebras,
as shown by the following

\begin{corollary}\label{cor:structure} Let $A$ and $B$ be local \Cs-algebras,
and let $\phi:A\to B$ be a c.p.c. order zero map. There is a positive
element $h\in \mathcal M(C^*(\phi(A)))$ and a $*$-homomorphism $\pi:A\to\mathcal
M(C^*(\phi(A)))\cap\{h\}'$ such that $\norm\phi = \norm h$ and $\phi(a) =
h\pi(a)$ for any $a\in A$.
\end{corollary}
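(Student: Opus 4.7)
The plan is to reduce the statement to the \Cs-algebra version of the structure theorem by passing to completions. First I would use Proposition \ref{prop:cpclocal} to lift $\phi$ to a unique c.p.c. order zero extension $\tilde\phi : \tilde A \to \tilde B$ between the completions. Since $\tilde A$ and $\tilde B$ are genuine \Cs-algebras, \sth\ applies directly to $\tilde\phi$ and produces a positive element $h\in\mathcal M(C^*(\tilde\phi(\tilde A)))$ and a $*$-homomorphism $\tilde\pi:\tilde A\to\mathcal M(C^*(\tilde\phi(\tilde A)))\cap\{h\}'$ with $\norm{\tilde\phi}=\norm h$ and $\tilde\phi(a)=h\tilde\pi(a)$ for every $a\in\tilde A$.

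Next I would show that the ambient multiplier algebra for $\tilde\phi$ already coincides with the one for $\phi$, so that no extra passage to the completion is needed in the conclusion. Since $\tilde\phi$ is norm-continuous and $A$ is dense in $\tilde A$, the image $\phi(A)$ is dense in $\tilde\phi(\tilde A)$. Hence the \Cs-subalgebras of $\tilde B$ generated by $\phi(A)$ and by $\tilde\phi(\tilde A)$ coincide; this common \Cs-subalgebra is what is meant by $C^*(\phi(A))$, and consequently $\mathcal M(C^*(\phi(A)))=\mathcal M(C^*(\tilde\phi(\tilde A)))$. Setting $\pi:=\tilde\pi|_A$ and keeping $h$ as above, the conclusions $\norm\phi=\norm{\tilde\phi}=\norm h$, $\pi(A)\subset\mathcal M(C^*(\phi(A)))\cap\{h\}'$ and $\phi(a)=h\pi(a)$ follow at once for every $a\in A$.

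I do not anticipate any real obstacle: essentially all the work has been absorbed by Proposition \ref{prop:cpclocal}, and the only other ingredient is a straightforward density argument matching the two generated \Cs-subalgebras together with their multiplier algebras. If desired, the uniqueness remark following Proposition \ref{prop:cpclocal} can be recorded here as well, by requiring that the support projections of $\phi$, $\pi$ and $h$ all agree, which pins $h$ and $\pi$ down uniquely.
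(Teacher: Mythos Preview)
Your proposal is correct and follows essentially the same route as the paper: extend $\phi$ to the completions via Proposition~\ref{prop:cpclocal}, apply \sth\ to $\tilde\phi$, and restrict the resulting support $*$-homomorphism to $A$. The density argument identifying $C^*(\phi(A))$ with $C^*(\tilde\phi(\tilde A))$ is a helpful clarification that the paper leaves implicit, but the underlying strategy is identical.
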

\begin{proof} It suffices to apply \cite[Theorem 2.3]{wz2009} to the extension
provided by Proposition \ref{prop:cpclocal} and restrict the support
$*$-homomorphism $\pi_{\tilde\phi}$ to $A$ in order to obtain the sought
$*$-homomorphism $\pi$.
\end{proof}

Following the ideas given in the introduction, we define a notion of comparison
of c.p.c. order zero maps. First of all we record the following result, whose
proof is routine and therefore omitted.

\begin{proposition}\label{prop:subeq} Let $A$ and $B$ be local \Cs-algebras and
$\phi,\psi:A\to B$ be two c.p.c. order zero maps. The following are equivalent.
	\begin{enumerate}[i.]
		\item $\exists\seq b\subset B\quad\text{such that}\quad \norm{b_n^*\psi(a)b_n-\phi(a)}\to0$
		for any $a\in A$;
		\item $\forall F\Subset A,\eps > 0\quad\exists\, b\in B\quad\text{such that}\quad
		\norm{b^*\psi(a)b-\phi(a)}<\eps$ for any $a\in F$.
	\end{enumerate}
\end{proposition}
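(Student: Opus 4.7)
The direction (i) $\Rightarrow$ (ii) is immediate: given $F \Subset A$ and $\eps > 0$, pointwise convergence at each of the finitely many elements of $F$ allows us to choose a single index $n$ with $\|b_n^*\psi(a)b_n - \phi(a)\| < \eps$ uniformly on $F$, and we set $b := b_n$.

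For the converse (ii) $\Rightarrow$ (i), my plan is to diagonalize against a countable dense subset of $A$, which exists by the blanket separability assumption in force throughout this section. Fix such a subset $\{a_k\}_{k \in \IN}$ and, for each $n$, apply (ii) to $F_n := \{a_1,\ldots,a_n\}$ with tolerance $1/n$ to produce $b_n \in B$ satisfying
$$\|b_n^*\psi(a_k)b_n - \phi(a_k)\| < 1/n \quad \text{for all } k \leq n.$$
This immediately secures pointwise convergence $b_n^*\psi(a_k)b_n \to \phi(a_k)$ on the dense subset $\{a_k\}$.

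Upgrading the conclusion to all of $A$ proceeds by the standard density argument: for arbitrary $a \in A$ and $\delta > 0$, pick $a_k$ with $\|a - a_k\| < \delta$, and estimate
$$\|b_n^*\psi(a)b_n - \phi(a)\| \leq \|b_n\|^2 \|a-a_k\| + \|b_n^*\psi(a_k)b_n - \phi(a_k)\| + \|a-a_k\|$$
using contractivity of $\phi$ and $\psi$. The main obstacle is keeping $\{\|b_n\|\}$ uniformly bounded, so that the first term can be made small by choosing $\delta$ small independently of $n$; without such a bound the density argument fails. I would arrange this by adjoining a fixed strictly positive contraction $e \in A$ to every $F_n$, which forces $\|b_n^*\psi(e)b_n - \phi(e)\| < 1/n$, and then, if necessary, replacing $b_n$ by an appropriate functional-calculus cut-down derived from $\psi(e)$ that preserves the approximation on $F_n$ while controlling $\|b_n\|$ in terms of $\|\phi(e)\|$ plus a small constant. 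With this refinement the diagonal sequence $\{b_n\}$ witnesses (i), completing the argument.
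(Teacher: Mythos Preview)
The paper omits the proof entirely, declaring it ``routine'', so there is no argument to compare against; your approach is certainly the natural one and presumably what the authors have in mind.

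Your direction (i)$\Rightarrow$(ii) is correct. For (ii)$\Rightarrow$(i) the diagonalisation against a countable dense set is also the right move, and you are right that without a uniform bound on the maps $\Theta_n(a)=b_n^*\psi(a)b_n$ the density argument collapses. However, your proposed repair is left too vague to count as a proof: knowing only that $\|b_n^*\psi(e)b_n\|$ is bounded does not, by any evident functional calculus in $\psi(e)$, produce a replacement $c_n$ with $\|c_n\|$ bounded \emph{and} $c_n^*\psi(a)c_n$ still close to $\phi(a)$ on $F_n$. (For instance, $c_n=\psi(e)^{1/2}b_n$ is bounded, but $c_n^*\psi(a)c_n=b_n^*\psi(e)^{1/2}\psi(a)\psi(e)^{1/2}b_n$ involves $h_\psi^2$ rather than $h_\psi$ and is no longer an instance of $b_n^*\psi(\cdot)b_n$.)

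A clean way to close the gap in the unital case: include $1_A$ in every $F_n$. Then for self-adjoint $a$ with $\|a\|\le 1$ one has $-\psi(1_A)\le\psi(a)\le\psi(1_A)$, hence
\[
\|b_n^*\psi(a)b_n\|\le\|b_n^*\psi(1_A)b_n\|\le\|\phi(1_A)\|+1,
\]
so the maps $\Theta_n$ are uniformly bounded by a constant independent of $n$, and the $3\varepsilon$ estimate goes through without ever needing $\|b_n\|$ itself to be bounded. In the non-unital case one can pass to the unique c.p.c.\ order zero extensions $\phi^{(+)},\psi^{(+)}:A^+\to B^+$ (cf.\ the argument preceding Corollary~\ref{cor:eps}), run the same bound with $1_{A^+}$, and restrict back. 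Spelling this out would make your argument complete.
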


\begin{definition} Let $A$ and $B$ be local \Cs-algebras and $\phi,\psi:A\to B$
be c.p.c. order zero maps. Then $\phi$ is said to be (Cuntz-)subequivalent to
$\psi$, in symbols $\phi\precsim\psi$, if any of the conditions of Proposition
\ref{prop:subeq} holds.
\end{definition}

It is left to the reader to check that the above relation defines a pre-order
among c.p.c. order zero maps between local \Cs-algebras. The antisymmetrization
yields an equivalence relation $\sim$, that is $\phi\sim\psi$ if
$\phi\precsim\psi$ and $\psi\precsim\phi$.

\begin{definition}\label{def:bcu} Let $A$ and $B$ be local \Cs-algebras. The
bivariant Cuntz semigroup $W(A,B)$ of $A$ and $B$ is the set of equivalence
classes
		$$W(A,B) = \{\phi:A\to M_\infty(B)\ |\ \phi\text{ is c.p.c. order
		zero}\}/\sim,$$
	endowed with the binary operation $+:W(A,B)\times W(A,B)\to W(A,B)$ given by%
		$$[\phi] + [\psi] = [\phi\hoplus\psi].$$
\end{definition}

In \cite[Proposition 2.5]{apt2009} it is shown that Cuntz comparison of positive
elements from a commutative \Cs-algebra $A$ reduces to the containment relation
of the corresponding support projections. Specifically, if $X$ is a compact
Hausdorff space and $f,g\in C(X)$ are positive functions, then $f\precsim g$ if
and only if $\supp f\subseteq \supp g$. We now provide the analogue of this result
for the Cuntz comparison of c.p.c. order zero maps.

\begin{proposition} Let $X$ be a compact Hausdorff space, $A$ be a unital
\Cs-algebra and $\phi,\psi:A\to C(X)$ be c.p.c. order zero maps. Then,
$\phi\precsim\psi$ if and only if $\supp\phi(1)\subseteq\supp\psi(1)$ and
$\pi_\phi(a) = \pi_\psi(a)\chi_{\supp\phi(1)}$ for any $a\in A$.
\end{proposition}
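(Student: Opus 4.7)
The plan is to exploit the structure theorem (Corollary \ref{cor:structure}) in its pointwise form on $C(X)$. Writing $h_\phi := \phi(1)$ and $h_\psi := \psi(1)$, I identify $\supp\phi(1)$ with the open cozero set $\{x\in X : h_\phi(x) > 0\}$ (the interpretation under which \cite[Proposition 2.5]{apt2009} holds), on which the support $*$-homo\-mor\-phism is pointwise determined by $\pi_\phi(a)(x) = \phi(a)(x)/h_\phi(x)$. I extend $\pi_\phi(a)$ and $\pi_\psi(a)$ by $0$ outside their respective cozero sets, so that both sides of the claimed identity $\pi_\phi(a) = \pi_\psi(a)\chi_{\supp\phi(1)}$ are bounded functions on all of $X$. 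Throughout I use the identities $\phi(a) = h_\phi\pi_\phi(a)$ and $\psi(a) = h_\psi\pi_\psi(a)$.

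For the forward direction, fix $a\in A$ and $\eps > 0$. Proposition \ref{prop:subeq} supplies $b\in C(X)$ with $\norm{b^*\psi(1)b - \phi(1)} < \eps$ and $\norm{b^*\psi(a)b - \phi(a)} < \eps$ simultaneously. Commutativity of $C(X)$ turns $b^*\psi(c)b$ into $|b|^2 h_\psi \pi_\psi(c)$; substituting the first approximation into the second yields $\norm{h_\phi\pi_\psi(a) - h_\phi\pi_\phi(a)} < 2\eps$, while the first alone gives the classical sub-equivalence $\phi(1)\precsim\psi(1)$, hence $\supp\phi(1)\subseteq\supp\psi(1)$ by \cite[Proposition 2.5]{apt2009}. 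Evaluating pointwise and letting $\eps\to 0$ forces $\pi_\phi(a)(x) = \pi_\psi(a)(x)$ on $\{h_\phi > 0\}$, which, together with the extension-by-zero convention, is precisely the sought identity.

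For the converse, given $F\Subset A$ and $\eps' > 0$, I need $b\in C(X)$ with $\norm{b^*\psi(a)b - \phi(a)} < \eps'$ for all $a\in F$. The hypothesis gives $\phi(a) = h_\phi\pi_\phi(a) = h_\phi\pi_\psi(a)$, so the problem reduces to producing $c\in C(X)^+$ with $\norm{c\,h_\psi - h_\phi} < \eps$ for a prescribed $\eps > 0$: then $b := c^{1/2}$ satisfies $\norm{b^*\psi(a)b - \phi(a)} = \norm{(c\,h_\psi - h_\phi)\pi_\psi(a)} \leq \eps\norm{a}$, and choosing $\eps < \eps'/\max_{a\in F}\norm{a}$ concludes. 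To construct $c$, I apply the R\o rdam cut-off trick: the compact set $\{h_\phi\geq\eps\}$ lies in $\{h_\phi > 0\}\subseteq\{h_\psi > 0\}$, so $h_\psi\geq\delta$ there for some $\delta > 0$; Urysohn's lemma yields $\chi\in C(X,[0,1])$ equal to $1$ on $\{h_\phi\geq\eps\}$ and to $0$ on $\{h_\psi\leq\delta/2\}$. Setting $c := \chi^2(h_\phi - \eps)_+/h_\psi$ where $\chi > 0$ and $c := 0$ elsewhere defines a continuous non-negative function whose product with $h_\psi$ equals $\chi^2(h_\phi - \eps)_+$, differing from $h_\phi$ by at most $\eps$ in sup-norm.

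The main obstacle to watch is interpreting $\supp\phi(1)$ correctly: the scalar characterization from \cite[Proposition 2.5]{apt2009} only holds for the \emph{open} cozero set (the closed-support version fails already for $f\equiv 1$, $g(x) = x$ on $[0,1]$), so $\chi_{\supp\phi(1)}$ in the target identity must be read as the indicator of this open set. Once that convention, and the corresponding extension-by-zero convention for $\pi_\phi, \pi_\psi$, are fixed, both implications reduce to the pointwise manipulations outlined above.
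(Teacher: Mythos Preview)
Your argument is correct and, for the forward implication, essentially identical to the paper's: both use the structure decomposition, specialize to $a=1$ to obtain $\supp\phi(1)\subseteq\supp\psi(1)$, and then compare $|b|^2\psi(1)\pi_\psi(a)$ with $\phi(1)\pi_\phi(a)$ to force $\pi_\phi(a)=\pi_\psi(a)$ on the cozero set of $h_\phi$. Your explicit triangle-inequality bookkeeping and your careful discussion of the open-support convention for $\supp\phi(1)$ are clarifications the paper leaves implicit.

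The only difference is the converse: the paper dismisses it as ``trivial'' in one line, whereas you build the witness $c$ by hand via Urysohn and a cut-off. Your construction is correct, but note that it amounts to reproving the scalar comparison $\phi(1)\precsim\psi(1)$; you could shorten it by invoking \cite[Proposition~2.5]{apt2009} directly to obtain $f_n\in C(X)$ with $|f_n|^2\psi(1)\to\phi(1)$, and then observe that $|f_n|^2\psi(a)=|f_n|^2\psi(1)\pi_\psi(a)\to\phi(1)\pi_\psi(a)=\phi(1)\pi_\phi(a)=\phi(a)$, using your hypothesis $\pi_\phi(a)=\pi_\psi(a)\chi_{\supp\phi(1)}$ and the boundedness of $\pi_\psi(a)$.
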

\begin{proof} One implication is trivial, so let us focus on the converse. If $\phi\precsim\psi$, then there
exists $\seq f\subset C(X)$ such that
	$$\norm{|f_n|^2\psi(a)-\phi(a)}\to0\qquad\forall a\in A.$$
In particular, this holds for $a=1$, which implies
	$$\norm{|f_n|^2\psi(1)-\phi(1)}\to0\quad \iff
	\quad\phi(1)\precsim\psi(1)\quad\iff\quad\supp\phi(1)\subseteq\supp\psi(1).$$
By using \sth\ on $\phi$ and $\psi$, one has
	$$\norm{|f_n|^2\psi(1)\pi_\psi(a)-\phi(1)\pi_\phi(a)}\to0\qquad\forall a\in
	A,$$
which, together with the previous condition between the images of the unit of $A$, implies that
$\pi_\phi(a)=\pi_\psi(a)\chi_{\supp\phi(1)}$ for any $a\in A$.
\end{proof}

One can also introduce an order structure on the set $W(A,B)$, where $A$ and $B$
are any local \Cs-algebras, by setting $[\phi]\leq[\psi]$ whenever the two
c.p.c. order zero maps $\phi,\psi:A\to B$ are such that $\phi\precsim\psi$. Thus,
the bivariant Cuntz semigroup $(W(A,B),+,\leq)$ equipped with this order
relation becomes an \emph{ordered} Abelian monoid. With the following result we
justify the use of the word ``\emph{semigroup}'' in Definition \ref{def:bcu}.

\begin{proposition} Let $A$ and $B$ be any local \Cs-algebras. Then
$(W(A,B),+,\leq)$ is a positively ordered Abelian monoid.
\end{proposition}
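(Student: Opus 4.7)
The strategy is to verify each of the defining properties of a positively ordered abelian monoid in turn: that the operation $+$ is well-defined on equivalence classes, that $(W(A,B),+)$ is a commutative monoid with unit $[0]$, that $\leq$ is a partial order compatible with $+$, and finally the positivity condition $[\phi]\leq[\phi]+[\psi]$ for all $[\phi],[\psi]$. Throughout I will freely identify $M_\infty(B)\oplus M_\infty(B)$ with its block-diagonal image in $M_\infty(B)$, so that $\phi\hoplus\psi$ does land in $M_\infty(B)$ as required by Definition \ref{def:bcu}.

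For well-definedness and order-compatibility of $+$, the key move is that if $\phi\precsim\phi'$ is witnessed by $\seq b\subset M_\infty(B)$ and $\psi\precsim\psi'$ by $\seq c$, then the block-diagonal elements $b_n\oplus c_n$ witness $\phi\hoplus\psi\precsim\phi'\hoplus\psi'$, because
\[
(b_n\oplus c_n)^*(\phi'\hoplus\psi')(a)(b_n\oplus c_n) = b_n^*\phi'(a)b_n\ \oplus\ c_n^*\psi'(a)c_n
\]
converges in norm to $\phi(a)\oplus\psi(a)=(\phi\hoplus\psi)(a)$ for each $a\in A$. Applying this simultaneously in both arguments yields both well-definedness of $+$ and the statement that $[\phi_1]\leq[\psi_1]$, $[\phi_2]\leq[\psi_2]$ imply $[\phi_1]+[\phi_2]\leq[\psi_1]+[\psi_2]$. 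Commutativity and associativity of $+$ follow by conjugating with the obvious permutation unitaries in $M_\infty(B)$ that swap and regroup blocks (these yield exact equalities after conjugation, hence Cuntz equivalences via constant sequences), and the zero map $0:A\to M_\infty(B)$ is a neutral element since $\phi\hoplus 0 = \phi\oplus 0$ is Cuntz equivalent to $\phi$ (witnessed by the isometry embedding the first block).

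Reflexivity and transitivity of $\leq$ are immediate from Proposition \ref{prop:subeq}, antisymmetry holds by construction of $\sim$ as the antisymmetrization of $\precsim$, so $\leq$ is a genuine partial order. For the positivity property $[\phi]\leq[\phi]+[\psi]$, I would use an approximate unit $\{e_\lambda\}$ for the hereditary \Cs-subalgebra generated by $\phi(A)$ in $M_\infty(B)$ and set $b_\lambda := e_\lambda^{1/2}\oplus 0 \in M_\infty(B)\oplus M_\infty(B)\subset M_\infty(B)$; then
\[
b_\lambda^*(\phi\hoplus\psi)(a)b_\lambda = e_\lambda^{1/2}\phi(a)e_\lambda^{1/2}\ \oplus\ 0 \ \longrightarrow\ \phi(a)\oplus 0,
\]
uniformly on finite subsets of $A$, using that $\phi$ is contractive and $\{e_\lambda\}$ is an approximate unit in the hereditary subalgebra containing all the $\phi(a)$. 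Condition (ii) of Proposition \ref{prop:subeq} is then satisfied, giving $\phi\oplus 0\precsim\phi\hoplus\psi$; combined with $\phi\sim\phi\oplus 0$ noted above, this yields $[\phi]\leq[\phi]+[\psi]$.

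The main bookkeeping obstacle, rather than a deep obstruction, will be carefully justifying the block-diagonal identifications and the existence of the approximate unit argument at the level of $M_\infty(B)$ (which is a local \Cs-algebra, not a \Cs-algebra), but since each finite subset $F\Subset A$ has image $\phi(F)$ contained in some $M_k(B)$ for $k$ large enough, one can always work inside a genuine \Cs-algebra for the approximation, and then return to $M_\infty(B)$ for the conclusion. This is precisely the kind of routine verification that earns the phrase ``proof is routine and therefore omitted''.
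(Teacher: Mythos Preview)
Your proof is correct and follows essentially the same approach as the paper's (very terse) argument, but it is considerably more detailed: you explicitly verify well-definedness, commutativity and associativity via permutation unitaries, order-compatibility of $+$, and positivity, whereas the paper simply asserts that $+$ is well-defined, that $\phi\hoplus\psi\sim\psi\hoplus\phi$ gives commutativity, and that the zero map is neutral with $0\precsim\phi$.

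One small observation on efficiency: for positivity the paper merely checks $[0]\leq[\phi]$, which is immediate by taking the zero sequence as witness (since $0^*\phi(a)0=0$). You instead prove the ostensibly stronger statement $[\phi]\leq[\phi]+[\psi]$ directly via an approximate-unit argument. This is fine, but unnecessary: since you have already established that $\leq$ is compatible with $+$, the implication $[0]\leq[\psi]\Rightarrow[\phi]=[\phi]+[0]\leq[\phi]+[\psi]$ gives your conclusion for free, and avoids the (admittedly minor) bookkeeping with approximate units in the local \Cs-algebra $M_\infty(B)$.
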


\begin{proof} It is clear that $+$ is well-defined. It follows from
$\phi\hoplus\psi\sim\psi\hoplus\phi$ that $W(A,B)$ is Abelian. The class of the
zero map gives the neutral element with respect to $+$, and, moreover,
$0\precsim\phi$ for any c.p.c. order zero map $\phi:A\to
M_\infty(B)$, so $[0]\leq[\phi]$.
\end{proof}

Recall that every Abelian semigroup $S$ can be equipped with the algebraic
ordering relation, that is, $x\leq y$ in $S$ if there exists $z\in S$ for which
$x+z = y$. The order $\leq$ defined above extends the algebraic one. Indeed, if
$x,y\in W(A,B)$ are such that $x + z = y$ for some $z\in W(A,B)$, then any
representatives $\alpha,\beta,\gamma$ of $x,y,z$ respectively are obviously such
that $[\alpha]+[\gamma]=[\beta]$ by definition. This implies 
	$$\exists\seq b\subset M_\infty(B)\text{ such that }\norm{b_n^*\beta(a)b_n -
	(\alpha\hoplus\gamma)(a)}\to0\quad\forall a\in\A A.$$
Taking the sequence $b_n':=(u_n\otimes e_{11})b_n$, where $\seq u$ is an approximate
unit for $M_\infty(B)$, one has
	$$\norm{{b'_n}^*\beta(a)b'_n-\alpha(a)\otimes e_{11}}\to0$$
for any $a\in A$, whence $x\leq y$. However, like in the case of the ordinary
Cuntz semigroup, this order rarely agrees with the algebraic one. The following
example sheds some light on the relation between the bivariant Cuntz semigroup
just defined and the well-established Cuntz semigroup $W(\_)$.

\begin{example}\label{Ex:RecoverCuntz} Let $B$ be a \Cs-algebra, and let $\phi:\CC\to M_\infty(B)$ be a
c.p.c. order zero map. By \cite[Theorem 2.3]{wz2009}, there exists a positive
element $h\in M_\infty(B)^+$ such that
		$$\phi(z) = zh,\qquad\forall z\in\CC.$$
	Therefore, we can identify the set of c.p.c. order zero maps between $\CC$ and
	$M_\infty(B)$ with the positive cone of $M_\infty(B)$. If $\phi,\psi:\CC\to
	M_\infty(B)$ are c.p.c. order zero maps associated to the positive elements
	$h_\phi, h_\psi\in M_\infty(B)^+$ respectively, then $\phi\precsim\psi$
	in $W(\mathbb{C},B)$ if and only if $h_\phi\precsim h_\psi$ in $W(B)$. Hence,
	the map $\phi\mapsto h_\phi$ yields an isomorphism between $W(\CC,B)$ and
	$W(B)$.
\end{example}

The following technical result is a special instance of a more general result by
Handelman \cite{handelman} that applies to generic elements $s,t$ of a
\Cs-algebra that satisfy to $s^*s\leq t^*t$.

\begin{lemma}\label{lem:handelman} Let $A$ be a \Cs-algebra, and let $a,b\in
A^+$ be such that $a\leq b$. Then, there exists a sequence $\seq z\subset A$ of
contractions such that $z_nb^{\frac12}\to a^{\frac12}$.
\end{lemma}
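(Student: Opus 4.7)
The plan is to produce $z_n$ explicitly via continuous functional calculus on $b$. Define $f_n(t) := t^{1/2}/(t+\tfrac1n)$ for $t\in [0,\norm{b}]$; since $f_n$ is continuous and vanishes at $0$, the element $f_n(b)$ lies in $A$, and we set $z_n := a^{\frac12} f_n(b)$, which therefore belongs to $A$ as well.

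The first thing to check is that each $z_n$ is a contraction. Using $z_n^* z_n = f_n(b)\,a\,f_n(b)$ together with the hypothesis $a \leq b$, we estimate
\[
\norm{z_n}^2 = \norm{f_n(b)\,a\,f_n(b)} \leq \norm{f_n(b)\,b\,f_n(b)} = \norm{b\, f_n(b)^2},
\]
and the spectral theorem computes the right-hand side as $\sup_{t\geq 0} t^2/(t+\tfrac1n)^2 \leq 1$. Next, a direct computation with the identity $f_n(b)\, b^{\frac12} = g_n(b)$, where $g_n(t) := t/(t+1/n)$, yields $z_n b^{\frac12} = a^{\frac12} g_n(b)$, so it suffices to prove $\norm{a^{\frac12}(1 - g_n(b))} \to 0$.

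Setting $h_n(t) := 1 - g_n(t) = 1/(nt+1)$, a second application of $a \leq b$ gives
\[
\norm{a^{\frac12} h_n(b)}^2 = \norm{h_n(b)\,a\,h_n(b)} \leq \norm{h_n(b)\,b\,h_n(b)} = \norm{b\,h_n(b)^2},
\]
and elementary calculus shows that the scalar maximum of $t/(nt+1)^2$ on $[0,\infty)$ is attained at $t = 1/n$ with value $1/(4n)$, which tends to $0$. The main conceptual obstacle is precisely this last step: the functional calculus element $h_n(b)$ converges to the complement of the support projection of $b$ only in the strong operator sense, not in norm, so a naive strong-convergence argument would fail. The inequality $a \leq b$ is what saves the day, allowing one to dominate the operator quantity $a^{\frac12} h_n(b)$ by the scalar-computable expression $b\, h_n(b)^2$, thereby upgrading strong convergence to norm convergence.
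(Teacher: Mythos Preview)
Your proof is correct and is essentially the same as the paper's: your $z_n = a^{1/2}f_n(b)$ with $f_n(t)=t^{1/2}/(t+\tfrac1n)$ is exactly the paper's formula $z_n = a^{\frac12}b^{\frac12}(b+\tfrac1n)^{-1}$. The paper merely records this formula and asserts that functional calculus verifies the required properties, whereas you actually carry out those verifications, so your write-up is a more complete version of the same argument.
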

\begin{proof} By functional calculus, one can check that the sought sequence
$\seq z$ is given by
	$$z_n = a^{\frac12}b^{\frac12}(b+\tfrac1n)^{-1},\qquad\forall n\in\IN,$$
and that each $z_n$ is such that $\norm{z_n}\leq 1$ for any $n\in\IN$.
\end{proof}

\begin{proposition}\label{prop:samesupp} Let $A$ and $B$ be local \Cs-algebras.
If $\phi,\psi:A\to B$ are two c.p.c. order zero maps with the same support
$*$-homomorphism $\pi:A\to\mathcal M(C)$, $C\subset B$, and such that $h_\phi\leq
h_\psi$ in $\mathcal M(C)$, then $\phi\precsim\psi$.
\end{proposition}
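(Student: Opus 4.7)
The plan is to combine the structure theorem (Corollary~\ref{cor:structure}) with Handelman's lemma (Lemma~\ref{lem:handelman}). First, using Corollary~\ref{cor:structure} and the hypothesis that $\phi,\psi$ share the support $*$-homomorphism $\pi$, I would write $\phi(a)=h_\phi\pi(a)$ and $\psi(a)=h_\psi\pi(a)$ for every $a\in A$, with $\pi(a)$ commuting with $h_\phi$ and $h_\psi$, and hence (by continuous functional calculus in $\mathcal M(C)$) with $h_\phi^{1/2}$ and $h_\psi^{1/2}$. In particular $\psi(a)=h_\psi^{1/2}\pi(a)h_\psi^{1/2}$.

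Next I would apply Lemma~\ref{lem:handelman} to $h_\phi\leq h_\psi$ inside the \Cs-algebra $\mathcal M(C)$, obtaining contractions $z_n$ with $\|z_nh_\psi^{1/2}-h_\phi^{1/2}\|\to 0$. The explicit Handelman sequence $z_n=h_\phi^{1/2}\cdot h_\psi^{1/2}(h_\psi+\tfrac1n)^{-1}$ in fact lies in $\mathcal M(C)\cdot C\subseteq C\subseteq B$, since the function $t\mapsto t^{1/2}(t+\tfrac1n)^{-1}$ vanishes at $0$ and so $h_\psi^{1/2}(h_\psi+\tfrac1n)^{-1}\in C^*(h_\psi)\subseteq C$. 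Setting $b_n:=z_n^*\in B$ and using the commutation of $\pi(a)$ with $h_\psi^{1/2}$, I would compute
\[
b_n^*\psi(a)b_n \;=\; (z_nh_\psi^{1/2})\,\pi(a)\,(z_nh_\psi^{1/2})^*,
\]
and appeal to joint continuity of multiplication on bounded sets together with $\|\pi(a)\|\leq\|a\|$ to conclude that this converges in norm to $h_\phi^{1/2}\pi(a)h_\phi^{1/2}=h_\phi\pi(a)=\phi(a)$ for every $a\in A$. This verifies condition (i) of Proposition~\ref{prop:subeq}, so $\phi\precsim\psi$.

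The conceptual move is aligning Handelman's square-root comparison precisely with the structural decomposition, so that the support $*$-homomorphism $\pi(a)$ is sandwiched between two factors that converge to $h_\phi^{1/2}$. The only technical snag I anticipate is ensuring the $b_n$ land in $B$ rather than merely in $\mathcal M(C)$, but the explicit Handelman formula, combined with the vanishing of $t^{1/2}(t+\tfrac1n)^{-1}$ at $0$, settles this automatically without any separate approximate-unit argument.
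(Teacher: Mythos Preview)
Your overall strategy matches the paper's: apply Lemma~\ref{lem:handelman} to $h_\phi\leq h_\psi$ in $\mathcal M(C)$ to get contractions $z_n$ with $z_n h_\psi^{1/2}\to h_\phi^{1/2}$, then observe that $z_n\psi(a)z_n^*=(z_nh_\psi^{1/2})\pi(a)(z_nh_\psi^{1/2})^*\to\phi(a)$.

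There is, however, a genuine gap in the step where you claim the witnessing sequence lies in $B$. You assert $C^*(h_\psi)\subseteq C$, but the hypotheses only give $h_\psi\in\mathcal M(C)$, not $h_\psi\in C$. The vanishing of $t\mapsto t^{1/2}(t+\tfrac1n)^{-1}$ at $0$ places $h_\psi^{1/2}(h_\psi+\tfrac1n)^{-1}$ in the non-unital \Cs-algebra generated by $h_\psi$ \emph{inside $\mathcal M(C)$}, which need not be contained in $C$. A concrete instance: take $A=c_0$, $B=K$, and let $\psi:c_0\to K$ be the diagonal embedding; then $h_\psi=1\in\ell^\infty=\mathcal M(c_0)$ and your $z_n$ is a nonzero scalar multiple of the identity, hence not in $K$. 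So the very ``technical snag'' you anticipate is not settled by the explicit formula.

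The paper's proof fixes exactly this point by multiplying the $z_n\in\mathcal M(C)$ by an approximate unit of $C$: since $C$ is an ideal in $\mathcal M(C)$ and $z_n\psi(a)z_n^*\in C$, cutting down by an approximate unit yields a sequence in $C\subseteq B$ that still witnesses $\phi\precsim\psi$. This approximate-unit step is precisely what you attempted to bypass, and it is necessary in the non-unital case.
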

\begin{proof} Lemma \ref{lem:handelman} extends easily to local \Cs-algebras
because they are closed under functional calculus. Therefore, there exists a
sequence of contractions $\seq z$ in $\mathcal M(C)$ such that $z_nh_\psi^{\frac12}\to
h_\phi^{\frac12}$. By using an approximate unit of $C$, one may assume that
$\seq z\subset C$, with the property that $\norm{z_n\psi(a)z_n^*-\phi(a)}\to0$
for any $a\in A$. Hence, $\phi\precsim\psi$.
\end{proof}

As shown in \cite[Corollary 3.2]{wz2009}, one can perform continuous functional
calculus on any c.p.c. order zero map $\phi:A\to B$ between \Cs-algebras by
setting
	$$f(\phi)(a) = f(h_\phi)\pi_\phi(a),\qquad\forall a\in A,$$
where $h_\phi$ and $\pi_\phi$ are given by \cite[Theorem 2.3]{wz2009}, and $f$
is any function in $C_0((0,1])$. This result
generalizes to c.p.c. order zero maps between local \Cs-algebras.

\begin{proposition} Let $A,B$ be local \Cs-algebras, and let $\phi:A\to B$ be a
c.p.c. order zero map. For any function $f\in C_0((0,1])$, the map $f(\phi):A\to
B$ given by
		$$f(\phi)(a) := f(h)\pi(a)\qquad\forall a\in A,$$
	where $h$ and $\pi$ arise from Proposition \ref{prop:cpclocal}, is a c.p.c. order
	zero map between local \Cs-algebras.
\end{proposition}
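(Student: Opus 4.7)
The plan is to reduce to the corresponding result \cite[Corollary 3.2]{wz2009} for c.p.c. order zero maps between \Cs-algebras by applying it to the extension $\tilde\phi: \tilde A \to \tilde B$ of $\phi$ provided by Proposition \ref{prop:cpclocal}.

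Concretely, \cite[Corollary 3.2]{wz2009} supplies a c.p.c. order zero map $f(\tilde\phi): \tilde A \to \tilde B$ with $f(\tilde\phi)(a) = f(\tilde h)\tilde\pi(a)$, where $\tilde h$ and $\tilde\pi$ are the structural data for $\tilde\phi$. By the uniqueness statement of Corollary \ref{cor:structure}, the data $h, \pi$ associated with the original map $\phi$ are the restrictions of $\tilde h, \tilde\pi$, so the prescription $f(\phi)(a) := f(h)\pi(a)$ coincides with the restriction $f(\tilde\phi)|_A$. Complete positivity, contractivity, and the order zero property then survive restriction, as these are all pointwise $*$-algebraic conditions.

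What remains is to show $f(\phi)(a) \in B$ for each $a \in A$. By linearity one reduces to $a \in A^+$ with $\|a\|\leq 1$, and the locality of $A$ yields $a^{1/n} \in A$, whence $\phi(a^{1/n}) \in B$ for every $n$. The identity
\[
    \phi(a^{1/n})^{n} = \bigl(h\pi(a^{1/n})\bigr)^{n} = h^{n}\pi(a^{1/n})^{n} = h^{n}\pi(a),
\]
which uses that $h$ commutes with $\pi(A)$ and that $\pi$ is a $*$-homomorphism, shows that $p(h)\pi(a)$ is a finite linear combination of elements of $B$, hence lies in $B$, for every polynomial $p$ vanishing at $0$.

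The main obstacle is carrying this over to a general $f \in C_0((0,1])$: approximating $f$ uniformly on $[0,1]$ by polynomials $p_N$ with $p_N(0)=0$ gives $f(h)\pi(a) = \lim_N p_N(h)\pi(a)$ as a norm limit in $\tilde B$ of elements of $B$, and since $B$ itself is not complete one must work a little harder to place the limit back in $B$. I would do so by invoking the local structure of $B$: all approximating elements $p_N(h)\pi(a)$ belong to the \Cs-subalgebra of $\tilde B$ generated by the finite subset $\{\phi(a^{1/n}) : 1\leq n \leq N\}\subset \phi(A)\subseteq B$, which by the definition of a local \Cs-algebra is contained in some $B_j$ from the defining family for $B$; since $B_j$ is norm-closed in $\tilde B$, the limit remains inside $B_j\subseteq B$.
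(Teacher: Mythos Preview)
Your overall strategy---reduce to the completed map $\tilde\phi$, verify that $f(\phi)$ inherits the c.p.c.\ order zero property by restriction, and then use locality of $B$ to place $f(h)\pi(a)$ back inside $B$ via polynomial approximation---is exactly the route the paper takes. The identity $\phi(a^{1/n})^n = h^n\pi(a)$ is also the key computational input in the paper's argument.

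There is, however, a genuine gap in your last step. You argue that each approximant $p_N(h)\pi(a)$ lies in the \Cs-algebra generated by the finite set $\{\phi(a^{1/n}) : 1\le n\le N\}$, hence in some $B_{j}$ from the defining family, and then conclude that the limit stays in that $B_j$. But the generating set, and therefore the index $j$, depends on $N$; as the degree of $p_N$ grows you are forced to enlarge the finite set, so you only get $p_N(h)\pi(a)\in B_{j_N}$ with $j_N$ varying. Nothing in the definition of a local \Cs-algebra guarantees a single $B_j$ containing all of these, so the norm limit cannot be placed in $B$ by this reasoning alone.

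What is missing is a \emph{fixed} finite subset $F\subset B$ with $h^n\pi(a)\in C^*(F)$ for \emph{all} $n$; then $C^*(F)\subset B$ by locality, it is norm-closed, and the limit argument goes through. The paper produces such an $F$ by passing to the $*$-homomorphism $\rho_\phi:C_0((0,1])\otimes A\to B$ attached to $\phi$ (so $\rho_\phi(t^n\otimes a)=h^n\pi(a)$) and taking $F=\{\rho_\phi(t\otimes e),\phi(a)\}$ for a strictly positive $e\in C^*(a)$; one then checks that $C^*(F)$ contains the whole image $\rho_\phi(C_0((0,1])\otimes C^*(a))$, in particular every $h^n\pi(a)$. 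A closely related choice that works for the same reason is $F=\{\phi(a),\phi(a^{1/2})\}$: the functions $(s,\lambda)\mapsto s\lambda$ and $(s,\lambda)\mapsto s\lambda^{1/2}$ separate points of $(0,1]\times(\sigma(a)\setminus\{0\})$ and vanish nowhere, so by Stone--Weierstrass $C^*(t\otimes a,\,t\otimes a^{1/2})=C_0((0,1])\otimes C^*(a)$, whence $C^*(\phi(a),\phi(a^{1/2}))$ contains $\rho_\phi(t^n\otimes a)=h^n\pi(a)$ for every $n$. Either fix closes the gap, and once you have it your argument is complete.
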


\begin{proof} When $A$ is a \Cs-algebra the result follows from \cite{wz2009}
directly, so we assume that $A$ is not complete with respect to the topology of
the \Cs-norm. For any positive contraction $a\in A^+$, the \Cs-algebra
$A_a:=C^*(a)$ inside the completion of $A$ is $\sigma$-unital. Hence, the image
of the restriction of $\phi$ onto $A_a$ is contained inside a finitely generated
\Cs-subalgebra, say $B_\phi(a)$, of the completion of $B$, which is contained in
$B$ itself. To see that $B_\phi(a)$ is finitely generated, consider the
$*$-homomorphism $\rho_\phi:C_0((0,1])\otimes A\to B$ associated to $\phi$ according to \cite[Corollary
3.1]{wz2009}. Since $A_a$ is $\sigma$-unital, there exists a strictly positive
contraction $e\in A_a$. By setting $G_a:=\{\rho_\phi(t\otimes e),\phi(a)\}$, where
$t$ is the generator of $C_0((0,1])$, one sees that
	$$\phi(A_a) = \rho_\phi(t\otimes A_a) \subset \rho_\phi(C_0((0,1])\otimes A_a)=C^*(G_a),$$
so that we can take $B_\phi(a) := C^*(G_a)\subset B$.

Then, we will use the claim that for any polynomial $p$ of one variable and with zero constant term and degree $d>1$, $p(\phi)(a)$ belongs to $B_\phi(a)$. Indeed, for any such $p$, one can find another polynomial $P_p$ of zero constant term and of $m_d + 1$ variables, where $m_d = \lceil\log_2(d)\rceil$, such that
	$$p(\phi)(a) := p(h)\pi(a) =
		P_p(\phi(a),\phi(a^{\frac12}),\ldots,\phi(a^{\frac1{2^{m_d}}})) \in
		B_\phi(a).$$
	For example, if $p(x) = x^2$ then $p(\phi)(a) = h^2\pi(a) = h^2\pi(a^{\frac12})^2$, whence
		$$\phi^2(a) = \phi(a^\frac12)^2,$$
	so that $P_{x^2}(y_1, y_2) = y_2^2$. Similarly, if $p(x) = x^3$ then $p(\phi)(a) = h^3\pi(a) =	h\pi(a^{\frac12})h^2\pi(a^{\frac14})^2$, i.e.
		$$\phi^3(a) = \phi(a^{\frac12})\phi(a^{\frac14})^2.$$
	Hence, $P_{x^3}(y_1, y_2, y_3) = y_2y_3^2$. More generally one can verify that a suitable choice of polynomials $P_{x^k}$ is
		$$P_{x^k}(y_1,\ldots, y_{m_k+1}) = y_{m_k}^{2^{m_k}-k} y_{m_k+1}^{2k-2^{m_k}}$$
	for any $k\in\IN$, so that for
		$$p(x) = \sum_{k=1}^n a_kx^k,\qquad a_n\neq 0$$
	one has
		$$P_p(y_1,\ldots, y_{m_n+1}) = \sum_{k=1}^n a_ky_{m_k}^{2^{m_k}-k} y_{m_k+1}^{2k-2^{m_k}}.$$
 	Therefore, by approximating any function $f\in C_0((0,1])$ with polynomials $\{p_n\}$ having	zero constant term, one can set
		$$f(\phi)(a) := \lim_{n\to\infty}p_n(h)\phi(a)\in B_\phi(a),$$
	whose extension by linearity to $A$ defines the sought c.p.c. order zero map.
\end{proof}

\begin{corollary} Let $A$ and $B$ be local \Cs-algebras, $\phi:A\to B$ be a
c.p.c. order zero map and $f\in C_0((0,1])$ such that $x-f(x)\geq 0$ for all
$x\in(0,1]$. Then $f(\phi)\precsim\phi$.
\end{corollary}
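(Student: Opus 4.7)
The plan is to reduce the statement directly to Proposition \ref{prop:samesupp} via the structure theorem for c.p.c.\ order zero maps on local \Cs-algebras.

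First I would apply Corollary \ref{cor:structure} to $\phi$, obtaining a positive contraction $h \in \mathcal{M}(C^*(\phi(A)))$ and a $*$-homomorphism $\pi : A \to \mathcal{M}(C^*(\phi(A))) \cap \{h\}'$ such that $\phi(a) = h\pi(a)$ for every $a \in A$. By the definition of functional calculus on c.p.c.\ order zero maps established in the previous proposition, we then have $f(\phi)(a) = f(h)\pi(a)$ for every $a \in A$, and $f(\phi)$ is itself a c.p.c.\ order zero map whose representation uses the same support $*$-homomorphism $\pi$ as $\phi$, only with $h$ replaced by $f(h)$.

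Second, I would observe that, since $\phi$ is contractive, $h$ is a positive contraction and hence $\mathrm{sp}(h) \subset [0,1]$; moreover $f(0) = 0$ because $f \in C_0((0,1])$. The hypothesis $x - f(x) \geq 0$ on $(0,1]$ then translates, via continuous functional calculus applied to $h$ inside $\mathcal{M}(C^*(\phi(A)))$, into the operator inequality $f(h) \leq h$. Thus the positive elements arising from the structure theorem for $f(\phi)$ and $\phi$ sit in the same multiplier algebra and are comparable in the required way.

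Finally I would invoke Proposition \ref{prop:samesupp} with $\phi$ in place of $\psi$ and $f(\phi)$ in place of $\phi$: the two maps share the support $*$-homomorphism $\pi$ and satisfy $h_{f(\phi)} = f(h) \leq h = h_\phi$, so the proposition yields $f(\phi) \precsim \phi$. No step here is particularly delicate; the only care to be exercised is in checking that $f(\phi)$ admits the same $\pi$ as support $*$-homomorphism (even though $f(h)$ may have a strictly smaller support projection than $h$, the representation $f(\phi)(a) = f(h)\pi(a)$ with $\pi$ commuting with $f(h)$ is still valid and is all that Proposition \ref{prop:samesupp} requires).
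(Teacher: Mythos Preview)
Your proof is correct and follows essentially the same route as the paper: use the structure decomposition $\phi = h\pi$, observe that $f(\phi) = f(h)\pi$ shares the support $*$-homomorphism, deduce $f(h)\leq h$ from the hypothesis on $f$, and invoke Proposition~\ref{prop:samesupp}. The paper's argument is just a compressed version of what you wrote.
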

\begin{proof} This result follows immediately from Proposition
\ref{prop:samesupp} since the maps $\phi$ and $f(\phi)$ share the same support
$*$-homomorphism, in the sense that $f(\phi)(a) = f(h_\phi)\pi_\phi(a)$ for any $a\in A$. Indeed, $f(\phi)=f(h)\pi_\phi$, and $f(h)\leq h$ in
$\mathcal M(C)$, where $C:=C^*(\phi(A))$.
\end{proof}

Let $f_\eps\in C_0((0,1])$ be the function defined by
	$$f_\eps(x) = \begin{cases}0 & x\in(0,\eps]\\ x-\eps &
	x\in(\eps,1],\end{cases}$$
that is, $f_\eps(x) = (x-\eps)_+$. For convenience we introduce the notation
$\phi_\eps$ by setting $\phi_\eps:=f_\eps(\phi)$ for any c.p.c. order zero map
$\phi:A\to B$ between the local \Cs-algebras $A$ and $B$.

\begin{corollary}\label{cor:cfcforcpc} Let $A$ and $B$ be local \Cs-algebras,
and let $\phi:A\to B$ be a c.p.c. order zero map. Then $\phi_\eps\precsim\phi$
for any $\eps > 0$.
\end{corollary}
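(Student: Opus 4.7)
The plan is to apply the preceding corollary directly. That corollary says that for any $f\in C_0((0,1])$ satisfying $x-f(x)\geq 0$ on $(0,1]$, one has $f(\phi)\precsim\phi$. So the entire task reduces to checking this pointwise inequality for the specific function $f_\eps$.

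First I would split on cases: if $x\in(0,\eps]$ then $f_\eps(x)=0$ and $x-f_\eps(x)=x>0$; if $x\in(\eps,1]$ then $f_\eps(x)=x-\eps$ and $x-f_\eps(x)=\eps\geq 0$. In both cases the inequality $x-f_\eps(x)\geq 0$ holds on $(0,1]$. Moreover $f_\eps\in C_0((0,1])$, as $f_\eps(0^+)=0$ and $f_\eps$ is continuous.

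Having verified the hypothesis, I would then simply invoke the preceding corollary with $f=f_\eps$ to conclude $\phi_\eps=f_\eps(\phi)\precsim\phi$. There is no genuine obstacle here; the only thing to be careful about is ensuring that $f_\eps$ belongs to $C_0((0,1])$ so that the functional calculus developed in the previous proposition applies, but this is immediate from the explicit piecewise formula.
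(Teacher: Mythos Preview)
Your proof is correct and is essentially the same as the paper's. The paper's proof simply notes that $f_\eps(h)\leq h$ for any positive contraction $h$, which is exactly the condition $x-f_\eps(x)\geq 0$ on $(0,1]$ that you verify before invoking the preceding corollary.
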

\begin{proof} This follows from the fact that, for any contractive positive
element $h$ of a \Cs-algebra, one has $f_\eps(h)\leq h$ for any $\eps > 0$.
\end{proof}

\begin{lemma} Let $A$ and $B$ be local \Cs-algebras, and let $\phi:A\to B$ be a
c.p.c. order zero map. Then $\phi((a-\eps)_+)\geq(\phi(a)-\eps)_+$ for any $\eps
> 0$ and $a\in A^+$.
\end{lemma}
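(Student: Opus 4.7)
The plan is to use the structure theorem for c.p.c.\ order zero maps between local \Cs-algebras (Corollary~\ref{cor:structure}) to reduce the inequality to a pointwise scalar statement, which then follows from elementary functional calculus.

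First I would apply Corollary~\ref{cor:structure} to $\phi$, obtaining a positive contraction $h\in\mathcal M(C^*(\phi(A)))$ and a $*$-homomorphism $\pi:A\to\mathcal M(C^*(\phi(A)))$ commuting with $h$, such that $\phi(a)=h\pi(a)$ for all $a\in A$. Because $\pi$ is a $*$-homomorphism, it commutes with continuous functional calculus, so
\[
\phi((a-\eps)_+) \;=\; h\,\pi((a-\eps)_+) \;=\; h\,(\pi(a)-\eps)_+,
\]
while on the other side
\[
(\phi(a)-\eps)_+ \;=\; (h\,\pi(a)-\eps)_+.
\]
Thus the claim reduces to showing $h(\pi(a)-\eps)_+\geq(h\pi(a)-\eps)_+$ inside $\mathcal M(C^*(\phi(A)))$.

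Since $h$ and $\pi(a)$ are commuting positive elements, they generate a commutative \Cs-subalgebra of $\mathcal M(C^*(\phi(A)))$. Identifying this subalgebra via Gelfand with continuous functions on its spectrum, the inequality becomes the pointwise statement: for every $s\in[0,1]$ and $t\geq 0$,
\[
s(t-\eps)_+\;\geq\;(st-\eps)_+.
\]
This is verified by splitting into cases. If $t\leq\eps$, then $(t-\eps)_+=0$ and, since $s\leq 1$, also $st\leq\eps$, so both sides vanish. If $t>\eps$ and $st\leq\eps$, the right-hand side vanishes and the left-hand side equals $s(t-\eps)\geq 0$. Finally, if $st>\eps$, then $s(t-\eps)=st-s\eps\geq st-\eps=(st-\eps)_+$ because $s\leq 1$.

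The only subtlety here is making sure the structure theorem is available in the local setting and that the functional calculus we use (for $(a-\eps)_+$ in $A$ and for the positive commuting pair $h,\pi(a)$ in the multiplier algebra) is justified; both are by now in place thanks to Proposition~\ref{prop:cpclocal} and Corollary~\ref{cor:structure}. Once this is noted, the proof is essentially a two-line commutative functional calculus argument, so I do not expect any serious obstacle.
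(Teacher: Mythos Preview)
Your proof is correct and follows essentially the same approach as the paper: both invoke the structure decomposition $\phi=h\pi$ from Corollary~\ref{cor:structure} and reduce the inequality to a commutative functional-calculus statement in the \Cs-algebra generated by the commuting positive elements $h$ and $\pi(a)$. The only cosmetic difference is that the paper reaches the identification $\phi((a-\eps)_+)=(\phi(a)-\eps h)_+$ via the order-zero property of the unitized extension $\phi^{(+)}$ applied to $a-\eps 1_{A^+}$, whereas you obtain it directly from $\pi$ commuting with functional calculus and then verify the pointwise scalar inequality $s(t-\eps)_+\geq(st-\eps)_+$ explicitly; the underlying computation is the same.
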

\begin{proof} By Proposition \ref{prop:cpclocal}, one has the decomposition
$\phi = h\pi$, where the positive element $h$ arises from the image of the unit
of the minimal unitization of $A$ through the unique c.p.c. order zero extension
$\phi^{(+)}:A^+\to B\dd$ of $\phi$. In particular, since $\norm\phi = \norm
h\leq 1$, one has
	\begin{align*}
		\phi^{(+)}(a-\eps 1_{A^+}) &= \phi(a) - \eps h\\
			&\geq\phi(a) - \eps1_{\mathcal{M}(C^*(\phi(A)))}.
	\end{align*}
	Considering the commutative C*-algebra generated by both sides of the above inequality, it follows that
	$(\phi(a)-\eps)_+\leq\phi^{(+)}(a-\eps1_{A^+})_+$. Moreover, from the
	positivity of $\phi$, one obtains the desired result, i.e.
	$\phi^{(+)}(a-\eps1_{A^+})_+ = \phi^{(+)}((a-\eps)_+) = \phi((a-\eps)_+)$.
\end{proof}

Observe that equality is attained when $\phi$ is a $*$-homomorphism rather than
a c.p.c. order zero map. We shall get back to this point in the next section,
where we introduce the notion of compact elements within the bivariant Cuntz
semigroup.

As a straightforward consequence of the already cited result of Handelman, i.e.
Lemma \ref{lem:handelman}, we have the following.
\begin{corollary}\label{cor:eps} Let $A$ and $B$ be local \Cs-algebras, and let
$\phi:A\to B$ be a c.p.c. order zero map. Then
$(\phi(a)-\eps)_+\precsim\phi((a-\eps)_+)$ for any $\eps > 0$ and $a\in A^+$.
\end{corollary}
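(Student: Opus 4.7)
The plan is to combine the preceding lemma, which gives the pointwise inequality $\phi((a-\eps)_+)\geq(\phi(a)-\eps)_+$ of positive elements in $B$, with Handelman's Lemma (Lemma \ref{lem:handelman}). The observation is that the conclusion we want, $(\phi(a)-\eps)_+\precsim \phi((a-\eps)_+)$, is a Cuntz comparison of positive elements in $B$, and any domination relation $x\leq y$ between positive elements of a \Cs-algebra yields $x\precsim y$ via Handelman's Lemma.

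More concretely, I would argue as follows. Set $y := \phi((a-\eps)_+)$ and $x := (\phi(a)-\eps)_+$, both positive elements of $B$. The previous lemma asserts $x\leq y$. Applying Lemma \ref{lem:handelman} to the pair $(x,y)$ produces a sequence of contractions $\seq z\subset B$ (the formula used there, $z_n=x^{1/2}y^{1/2}(y+\tfrac1n)^{-1}$, is well-defined in $B$ since $B$ is local and hence closed under the functional calculus used) with $z_n y^{1/2}\to x^{1/2}$. Then
\[
z_n y z_n^* = \bigl(z_n y^{1/2}\bigr)\bigl(z_n y^{1/2}\bigr)^*\longrightarrow x^{1/2}x^{1/2}=x,
\]
which is precisely the statement that $x\precsim y$ in the Cuntz sense for positive elements.

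There is no substantial obstacle here: the lemma does all the analytic work, and Handelman's lemma provides the standard passage from algebraic ordering to Cuntz subequivalence. The only minor care required is to verify that Handelman's sequence $z_n$ remains in $B$ rather than in its completion, but since $B$ is local and closed under continuous functional calculus on normal elements, this is immediate from the explicit formula for $z_n$.
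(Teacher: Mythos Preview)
Your proof is correct and is exactly the argument the paper has in mind: the preceding lemma gives the inequality $(\phi(a)-\eps)_+\leq\phi((a-\eps)_+)$, and Lemma~\ref{lem:handelman} converts this into Cuntz subequivalence, with the locality of $B$ ensuring the witnessing sequence stays in $B$. The paper states only that the corollary is a straightforward consequence of Handelman's lemma, so you have simply spelled out the details.
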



\subsection{Additivity}

We now proceed to describe the behaviour of the bivariant Cuntz semigroup under
finite direct sums and show that it describes an additive bifunctor from the category of local \Cs-algebras to that of
partially ordered Abelian monoids.

Let $A_1$ and $A_2$ be local \Cs-algebras. Given two c.p.c. order zero maps
$\phi_1:A_1\to B$ and $\phi_2:A_2\to B$, their direct sum $\phi_1\oplus\phi_2$
is easily seen to be a c.p.c. order zero map. For the converse of this statement
we have the following.

\begin{lemma}\label{lem:cpcds} Let $A_1,A_2,B$ be local \Cs-algebras, and let
$\phi:A_1\oplus A_2\to B$ be a c.p.c. order zero map. Then, there are c.p.c.
order zero maps $\phi_1:A_1\to B$ and $\phi_2:A_2\to B$ such that
	\begin{enumerate}[i.]
		\item $\phi_1(A_1)\cap\phi_2(A_2) = \{0\}$;
		\item $\phi_1(a_1) + \phi_2(a_2) = \phi(a_1\oplus a_2)$.
	\end{enumerate}
	That is, $\phi = \phi_1\oplus\phi_2$.
\end{lemma}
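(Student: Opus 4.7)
The natural candidates are the obvious restrictions: define
\[
\phi_1 : A_1 \to B, \quad \phi_1(a) := \phi(a \oplus 0),
\qquad
\phi_2 : A_2 \to B, \quad \phi_2(a) := \phi(0 \oplus a).
\]
The plan is then to verify in turn that (a) each $\phi_i$ is c.p.c.\ order zero, (b) their ranges are orthogonal in $B$ (from which (i) will follow), and (c) they recompose to $\phi$ via (ii). Only (a) and (b) require any thought; (c) is immediate from the linearity of $\phi$ applied to $a_1 \oplus a_2 = (a_1 \oplus 0) + (0 \oplus a_2)$.

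For (a), observe that the inclusion $\iota_1 : A_1 \to A_1 \oplus A_2$, $a \mapsto a \oplus 0$, is an injective $*$-homomorphism of local \Cs-algebras, and $\phi_1 = \phi \circ \iota_1$. Since the composition of a c.p.c.\ order zero map with a $*$-homomorphism is again c.p.\ and contractive, and since $\iota_1$ preserves orthogonality of positive elements, $\phi_1$ inherits the order zero property directly from $\phi$. The same argument applies to $\phi_2$.

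For (b), take any $a_1 \in A_1^+$ and $a_2 \in A_2^+$. Then $(a_1 \oplus 0)(0 \oplus a_2) = 0$ in $A_1 \oplus A_2$, and since $\phi$ is order zero we obtain
\[
\phi_1(a_1)\,\phi_2(a_2) \;=\; \phi(a_1 \oplus 0)\,\phi(0 \oplus a_2) \;=\; 0.
\]
Polarizing in each variable and using that $\phi_i$ is $*$-preserving extends this to $\phi_1(A_1)\,\phi_2(A_2) = \{0\}$. For (i), suppose $x \in \phi_1(A_1) \cap \phi_2(A_2)$. Because $\phi_1(A_1)$ is $*$-invariant, $x^* \in \phi_1(A_1)$, and $x \in \phi_2(A_2)$, so the orthogonality just proved gives $x^*x = 0$ and hence $x = 0$.

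There is no real obstacle here; the only mild subtlety is checking that orthogonality of the positive cones of $\phi_1(A_1)$ and $\phi_2(A_2)$ genuinely forces the intersection of the linear spans to be trivial, which is handled by the $*$-invariance step above. The rest is routine bookkeeping with the direct sum decomposition.
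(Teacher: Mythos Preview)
Your proof is correct and follows essentially the same route as the paper: define $\phi_1$ and $\phi_2$ as the restrictions $\phi_1(a_1)=\phi(a_1\oplus 0)$, $\phi_2(a_2)=\phi(0\oplus a_2)$, use linearity for (ii), and use the order zero property on the orthogonal pair $(a_1\oplus 0)$, $(0\oplus a_2)$ to force the intersection in (i) to be trivial via $\|x\|^2=\|x^*x\|=0$. The paper applies the orthogonality directly to $\phi(a_1^*\oplus 0)\phi(0\oplus a_2)$ without isolating the positive case first, whereas you polarize from positives and invoke $*$-invariance; both are fine, and your version is slightly more explicit about why the restrictions are themselves c.p.c.\ order zero.
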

\begin{proof} Define the maps $\phi_1:A_1\to B$ and $\phi_2:A_2\to B$ as
		$$\phi_1(a_1) := \phi(a_1\oplus
		0)\qquad\text{and}\qquad\phi_2(a_2):=\phi(0\oplus a_2)$$
	respectively. Clearly one has $\phi(a_1\oplus a_2) = \phi_1(a_1)+\phi_2(a_2)$.
	Suppose there exists an element $b\in\phi(A_1\oplus A_2)$ such that
	$b=\phi_1(a_1)=\phi_2(a_2)$ for some $a_1\in A_1$ and $a_2\in A_2$. By the
	\Cs-identity, we have
		\begin{align*}
			\norm b^2 &= \norm{b^*b}\\
				&= \norm{\phi_1(a_1)^*\phi_2(a_2)}\\
				&= \norm{\phi(a_1^*\oplus 0)\phi(0\oplus a_2)}\\
				&= 0,
		\end{align*}
	where we made use of the fact that $(a_1^*\oplus 0)\perp (0\oplus a_2)$ and
	that $\phi$ has the order zero property. Therefore, $\norm b=0$, whence $b=0$,
	i.e. $\phi_1(A_1)\cap\phi_2(A) = \{0\}$.
\end{proof}

\begin{theorem} For any triple of local \Csalg s $A_1$, $A_2$ and $B$, the
semigroup isomorphism
		$$W(A_1\oplus A_2, B)\cong W(A_1,B)\oplus W(A_2, B)$$
	holds.
\end{theorem}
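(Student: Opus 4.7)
The plan is to construct a pair of mutually inverse semigroup maps $F,G$ between $W(A_1\oplus A_2,B)$ and $W(A_1,B)\oplus W(A_2,B)$. The forward map is given directly by Lemma \ref{lem:cpcds}: for $[\phi]\in W(A_1\oplus A_2, B)$, set $F[\phi]:=([\phi_1],[\phi_2])$, where $\phi_i(a_i):=\phi(a_i\oplus 0)$ or $\phi(0\oplus a_i)$ are the c.p.c.\ order zero components supplied by that lemma (applied to the local \Cs-algebra $M_\infty(B)$). Well-definedness on equivalence classes and order preservation are immediate from Proposition \ref{prop:subeq}: any sequence $b_n$ witnessing $\phi\precsim\psi$ also witnesses $\phi_i\precsim\psi_i$ upon restriction of test elements to the subalgebras $A_i\oplus 0$, $0\oplus A_i$.

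For the reverse direction, I fix isometries $V_1,V_2$ in the multiplier algebra of $M_\infty(B)$ with $V_1^*V_2=0$ and $V_i^*V_i=1$, arising from any identification $M_\infty\cong M_\infty\oplus M_\infty$ induced by partitioning $\IN$ into two infinite subsets. Given $([\phi_1],[\phi_2])$, define
\[
\Phi(a_1\oplus a_2):=V_1\phi_1(a_1)V_1^*+V_2\phi_2(a_2)V_2^*,
\]
which is c.p.c.\ order zero by the orthogonality of the ranges of $V_1$ and $V_2$, and set $G([\phi_1],[\phi_2]):=[\Phi]$. Well-definedness is checked similarly: if $b_n^{(i)}$ implements $\phi_i\precsim\psi_i$, then $c_n:=V_1b_n^{(1)}V_1^*+V_2b_n^{(2)}V_2^*$ implements the corresponding Cuntz comparison between the $\Phi$'s, the cross terms disappearing by the orthogonality relations satisfied by the $V_i$.

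The two non-routine verifications are the compositions. $F\circ G$ returns $([V_i\phi_i(\_)V_i^*])_{i=1,2}$, which equals $([\phi_i])_{i=1,2}$ because the constant sequences $V_i^*$ and $V_i$ implement the Cuntz equivalences $V_i\phi_i(\_)V_i^*\sim\phi_i$. For $G\circ F$, I would use that the components $\phi_1,\phi_2$ from Lemma \ref{lem:cpcds} have orthogonal images $\phi_1(A_1)\perp\phi_2(A_2)$ inside $M_\infty(B)$. Choosing approximate units $e_n^{(i)}$ for $C^*(\phi_i(A_i))$, the sequences $V_1e_n^{(1)}+V_2e_n^{(2)}$ and $e_n^{(1)}V_1^*+e_n^{(2)}V_2^*$ implement the two required Cuntz comparisons between $\phi=\phi_1+\phi_2$ and $V_1\phi_1(\_)V_1^*+V_2\phi_2(\_)V_2^*$: by orthogonality all mixed terms vanish, and one is left with the desired pointwise limits. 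Additivity of $F$ then follows from the fact that the decomposition of Lemma \ref{lem:cpcds} is natural, so that $(\phi\hoplus\psi)_i$ is Cuntz equivalent to $\phi_i\hoplus\psi_i$.

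The main obstacle I anticipate is not conceptual but bookkeeping: managing the identification of $M_\infty(B)\oplus M_\infty(B)$ with a subalgebra of $M_\infty(B)$ via the isometries $V_i$, and ensuring that the various limits and orthogonality relations needed in the $G\circ F$ step interact correctly with Proposition \ref{prop:subeq}. This is essentially the stability of the bivariant Cuntz semigroup under corner absorption, which should be accessible by the same truncation and approximate-unit techniques already developed in this section.
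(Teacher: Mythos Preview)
Your proposal is correct and follows essentially the same approach as the paper: both rely on Lemma~\ref{lem:cpcds} for the decomposition and on the identification $M_\infty(B)\oplus M_\infty(B)\cong M_\infty(B)$ (you via isometries $V_i$, the paper via $M_2(M_\infty(B))\cong M_\infty(B)$). The only cosmetic difference is that the paper defines the single map $\sigma([\phi_1]\oplus[\phi_2])=[\phi_1\oplus\phi_2]$ and proves it is bijective and additive directly, whereas you construct both $F$ and $G$ and verify they are mutual inverses; the ingredients and verifications are the same.
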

\begin{proof} Let $\sigma : W(A_1,B)\oplus W(A_2, B)\to W(A_1\oplus A_2,B)$ be
the map given by
		$$\sigma([\phi_1]\oplus[\phi_2]) = [\phi_1\oplus\phi_2].$$
	By Lemma \ref{lem:cpcds} the map $\sigma$ is surjective. To prove injectivity we show that $\phi_1\oplus\phi_2\precsim\psi_1\oplus\psi_2$
	implies $\phi_k\precsim\psi_k$, $k=1,2$. By hypothesis, there exists a
	sequence $\seq b\subset M_\infty(B)$ such that
	$\norm{b_n^*(\psi_1(a_1)\oplus\psi_2(a_2))b_n-\phi_1(a_1)\oplus\phi_2(a_2)}
	\to 0$ for every $a_1\in A_1,a_2\in A_2$. Considering $M_2(M_\infty(B))\cong
	M_\infty(B)$, the sequence $b_n$ has the structure
		$$b_n = \sum_{i,j=1}^2b_{n,ij}\otimes e_{ij},$$
	where $b_{n,ij}\in M_\infty(B)$ for any $i,j=1,2$, and $\{e_{ij}\}_{i,j=1,2}$
	form the standard basis of matrix units of $M_2$. Thus, for $a_2 = 0$, one
	finds that $b_{n,11}^*\psi_1(a_1)b_{n,11}\to \phi_1(a_1)$ in norm for any
	$a_1\in A_1$, i.e. $\phi_1\precsim\psi_1$. A similar argument with $a_1=0$
	leads to $\phi_2\precsim\psi_2$.
	
	To check that $\sigma$ preserves the semigroup operations, it suffices to show
	that
	$(\phi_1\hoplus\psi_1)\oplus(\phi_2\hoplus\psi_2) \sim
	(\phi_1\oplus\phi_2)\hoplus(\psi_1\oplus\psi_2)$.
	A direct computation reveals that such equivalence is witnessed by the
	sequence $\seq c\subset M_4(M_\infty(B))$ given by $c_n:= u_n\otimes(e_{11} +
	e_{44} + e_{23} + e_{32})$, where $\seq u\subset M_\infty(B)$ is an
	approximate unit.
\end{proof}

We observe that countable additivity in the first argument can be obtained when
the algebra in the second argument is stable. This is because the countable
direct sum of maps with codomain in $M_\infty(B)$ could take values in $B\otimes
K$ instead, by the definition of the direct sum. In this case, an isomorphism
between $W(\bigoplus_{n\in\IN}A_n,B)$ and $\bigoplus_{n\in\IN}W(A_n,B)$ is
provided by the map
	$$\sigma\left(\bigoplus_{n\in\IN}[\phi_n]\right):=
	\left[\bigoplus_{n\in\IN}\frac1{2^n}\phi_n\right].$$

\begin{theorem} For any triple of local \Cs-algebras $A$, $B_1$ and $B_2$, the
semigroup isomorphism
		$$W(A, B_1\oplus B_2)\cong W(A,B_1)\oplus W(A, B_2)$$
	holds.
\end{theorem}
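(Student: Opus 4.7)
The natural approach is to exploit the canonical decomposition $M_\infty(B_1\oplus B_2)\cong M_\infty(B_1)\oplus M_\infty(B_2)$. Let $\pi_k\colon B_1\oplus B_2\to B_k$ ($k=1,2$) be the canonical coordinate projection; since each $\pi_k$ is a $*$-homomorphism, its ampliation to $M_\infty$ is a $*$-homomorphism, and in particular c.p.c. order zero. Thus for any c.p.c. order zero map $\phi\colon A\to M_\infty(B_1\oplus B_2)$, the compositions $\phi_k:=\pi_k\circ\phi\colon A\to M_\infty(B_k)$ are c.p.c. order zero, and under the identification above one has $\phi=\phi_1\oplus\phi_2$.

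I would define
\[
\tau\colon W(A,B_1\oplus B_2)\longrightarrow W(A,B_1)\oplus W(A,B_2),\qquad [\phi]\longmapsto [\pi_1\circ\phi]\oplus[\pi_2\circ\phi],
\]
and its candidate inverse $\sigma([\phi_1]\oplus[\phi_2]):=[\phi_1\oplus\phi_2]$ analogous to the previous theorem. The identities $\tau\circ\sigma=\mathrm{id}$ and $\sigma\circ\tau=\mathrm{id}$ are then immediate from the above decomposition, once we verify well-definedness.

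For well-definedness of $\tau$ on Cuntz classes, suppose $\phi\precsim\psi$ witnessed by a sequence $\{b_n\}\subset M_\infty(B_1\oplus B_2)$ satisfying $\|b_n^*\psi(a)b_n-\phi(a)\|\to 0$ for all $a\in A$. Writing $b_n=b_{n,1}\oplus b_{n,2}$ in $M_\infty(B_1)\oplus M_\infty(B_2)$, one gets
\[
b_n^*\psi(a)b_n-\phi(a)=\bigl(b_{n,1}^*\psi_1(a)b_{n,1}-\phi_1(a)\bigr)\oplus\bigl(b_{n,2}^*\psi_2(a)b_{n,2}-\phi_2(a)\bigr),
\]
whose norm is the maximum of the two summand norms, so each $\phi_k\precsim\psi_k$, as required. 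Well-definedness of $\sigma$ is even simpler: a witnessing sequence $\{b_{n,k}\}$ for $\phi_k\precsim\psi_k$ in each coordinate combines into $b_n:=b_{n,1}\oplus b_{n,2}$ witnessing $\phi_1\oplus\phi_2\precsim\psi_1\oplus\psi_2$. Order preservation of $\tau$ in the reverse direction is verified by the same coordinate-wise argument.

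Finally, to see that $\tau$ respects the monoid structure, observe that composition with the fixed $*$-homomorphism $\pi_k$ commutes with the $\hoplus$ construction, namely $\pi_k\circ(\phi\hoplus\psi)=(\pi_k\circ\phi)\hoplus(\pi_k\circ\psi)$, so $\tau([\phi]+[\psi])=\tau([\phi])+\tau([\psi])$. The only substantive point is the identification $M_\infty(B_1\oplus B_2)\cong M_\infty(B_1)\oplus M_\infty(B_2)$, which is straightforward but is where the argument really hinges; fortunately no completion issue arises because the decomposition is purely algebraic.
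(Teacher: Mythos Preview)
Your argument is correct and follows essentially the same route as the paper: both hinge on the identification $M_\infty(B_1\oplus B_2)\cong M_\infty(B_1)\oplus M_\infty(B_2)$, define the inverse map $[\phi_1]\oplus[\phi_2]\mapsto[\phi_1\hoplus\phi_2]$ (what you write as $\phi_1\oplus\phi_2$ is the paper's $\hoplus$, since both maps share domain $A$), and check compatibility with Cuntz subequivalence coordinate-wise. Your presentation is in fact slightly more explicit than the paper's, which omits the verification that the map respects addition; just be careful to write $\pi_k^{(\infty)}$ for the ampliation to $M_\infty$ and to use the paper's $\hoplus$ notation consistently.
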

\begin{proof} Since $M_\infty(B_1\oplus B_2)$ is isomorphic to
$M_\infty(B_1)\oplus M_\infty(B_2)$, one has that for every c.p.c. order zero
map $\phi:A\to M_\infty(B_1\oplus B_2)$ there are c.p.c. order zero maps
$\phi_k:A\to M_\infty(B_k)$, $k=1,2$ such that $\phi$ can be identified, up to
isomorphism, with $\phi_1\hoplus\phi_2$\footnote{such maps are given by $\phi_k
:= \pi_k^{(\infty)}\circ\phi$, where $\pi_k^{(\infty)}$ is the
$\infty$-ampliation of the natural projection $\pi_k:B_1\oplus B_2\to B_k$, for $k=1,2$, that
is, $\pi_k^{(\infty)}:= \pi_k\otimes\id_{M_\infty}$.}. This shows that the map
$\rho: W(A,B_1)\oplus W(A,B_2)\to W(A,B_1\oplus B_2)$ given by
		$$\rho([\phi_1]\oplus[\phi_2]) := [\phi_1\hoplus\phi_2]$$
is surjective. Injectivity comes from the fact that
$\phi_1\hoplus\phi_2\precsim\psi_1\hoplus\psi_2$ implies $\phi_1\precsim\psi_1$
and $\phi_2\precsim\psi_2$.
\end{proof}


\subsection{Functoriality} We proceed by showing that $W(\_,\_)$ can be viewed as a
functor from the bicategory ${\cat{C}^*\loc}\op\times\cat{C}^*\loc$ to
$\cat{OrdAMon}$, where $\cat{C}^*\loc$ denotes the category of local
\Cs-algebras, and $\cat{OrdAMon}$ that of ordered Abelian monoids. Further
categorical aspects are confined to Section \ref{sec:cats}, where it is shown
that the target category can be enriched with extra structure.

\begin{proposition} Let $B$ be a local \Cs-algebra. Then, $W(\ \cdot\ ,B)$ is a
contravariant functor from the category of local \Cs-algebras to that of ordered
Abelian monoids.
\end{proposition}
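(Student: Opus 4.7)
The plan is to define the action of $W(\_,B)$ on morphisms and then verify the functorial and monotonicity axioms. Given a morphism $f:A_1\to A_2$ of local \Cs-algebras (i.e. a $*$-homomorphism), I would set
$$W(f,B):W(A_2,B)\to W(A_1,B),\qquad W(f,B)([\phi]) := [\phi\circ f].$$
The first thing to verify is that $\phi\circ f$ is a legitimate representative: since $f$ is a $*$-homomorphism, it is c.p.c. and orthogonality preserving, so the composition of $\phi$ (c.p.c. order zero into $M_\infty(B)$) with $f$ is again c.p.c. order zero $A_1\to M_\infty(B)$.

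Next I would show that this assignment descends to equivalence classes and preserves the order. If $\phi\precsim\psi$ with witnessing sequence $\seq b\subset M_\infty(B)$ satisfying $\norm{b_n^*\psi(a)b_n-\phi(a)}\to 0$ for all $a\in A_2$, then the same sequence $\seq b$ witnesses $\phi\circ f\precsim \psi\circ f$, since applying the inequality at the elements $f(a)$, $a\in A_1$, gives $\norm{b_n^*\psi(f(a))b_n-\phi(f(a))}\to0$. In particular, $\phi\sim\psi$ implies $\phi\circ f\sim\psi\circ f$, so $W(f,B)$ is well-defined, and the order $\leq$ on $W(A_2,B)$ is preserved.

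For the monoid structure, I would observe that for any $\phi,\psi:A_2\to M_\infty(B)$ one has the identity
$$(\phi\hoplus\psi)\circ f = (\phi\oplus\psi)\circ \Delta_{A_2}\circ f = (\phi\oplus\psi)\circ(f\oplus f)\circ\Delta_{A_1} = (\phi\circ f)\hoplus(\psi\circ f),$$
because $\Delta_{A_2}\circ f = (f\oplus f)\circ\Delta_{A_1}$. Thus $W(f,B)([\phi]+[\psi]) = W(f,B)([\phi])+W(f,B)([\psi])$, and clearly $W(f,B)$ sends the class of the zero map to itself, so it is an ordered monoid homomorphism. Contravariant functoriality, $W(\id_A,B)=\id_{W(A,B)}$ and $W(g\circ f,B) = W(f,B)\circ W(g,B)$, then follows immediately from associativity of composition: $(\phi\circ g)\circ f = \phi\circ(g\circ f)$.

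There is no serious obstacle to this proof; everything reduces to checking that composition with a $*$-homomorphism preserves the structural properties (order zero, Cuntz comparison via conjugation, the map $\hoplus$). The only point that deserves a brief remark is the naturality of $\Delta$ used in verifying additivity; everything else is essentially formal from the definitions of $\precsim$ and of $\hoplus$ given in the notation list.
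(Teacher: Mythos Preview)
Your proof is correct and follows essentially the same approach as the paper: define $W(f,B)$ by precomposition with $f$, and use the same witnessing sequence $\seq b\subset M_\infty(B)$ to show that Cuntz subequivalence (hence the order and well-definedness) is preserved. You are in fact more thorough than the paper, which omits the explicit verification of additivity via the naturality of $\Delta$ and of the functor axioms on identities and compositions.
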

\begin{proof} Let $A,A'$ be arbitrary local \Cs-algebras. Consider a
$*$-homomorphism $f\in\Hom(A,A')$ and a c.p.c. order zero map
$\psi':A'\to M_\infty(B)$. Define $f^*(\psi')$ in such a way that the
diagram
	$$\xymatrix@R=0.5em@C=3em{%
		A  \ar[dr]^-{f^*(\psi')}\ar[dd]_{f} &             \\
		                                    & M_\infty(B) \\
		A' \ar[ur]_-{\psi'}                 & 
	}$$
commutes, i.e. $f^*(\psi') := \psi'\circ f$. Then $f^*(\psi)$ is a c.p.c. 
order zero between $A$ and $M_\infty(B)$. Therefore, $f^*$
defines a pull-back between c.p.c. order zero maps which can be projected onto
equivalence classes from the corresponding bivariant Cuntz semigroups by setting
	$$W(f,B)([\psi']) = [f^*(\psi')],\qquad\text{ for all } [\psi']\in W(A',\A
	B).$$
This yields a well-defined map. It implies that for every $*$-homomorphism
$f$ there exists a semigroup homomorphism $W(f,B)$ such that the following
diagram
	$$\xymatrix{%
		A           \ar[r]^f\ar[d]_{W(\ \cdot\ ,B)}  &
		A'          \ar[d]^{W(\ \cdot\ ,B)}            \\
		W(A,B)                                          &
		W(A', B) \ar[l]_{W(f,B)}
	}$$
commutes. To see that such map preserves the order consider another c.p.c. order
zero map $\phi':A'\to M_\infty(B)$ with $\phi'\precsim\psi'$. Then there exists
a sequence $\seq b\subset M_\infty(B)$ such that
$\norm{b_n^*\psi'(a)b_n-\phi'(a)}\to0$ for any $a\in A'$. In particular this is
true if $a$ is restricted to $f(A)\subset A'$, whence $f^*(\phi)\precsim f^*(\psi)$.
\end{proof}

\begin{proposition} Let $A$ be a local \Cs-algebra. Then, $W(A,\ \cdot\ )$ is a
covariant functor from the category of local \Cs-algebras to that of ordered
Abelian monoids.
\end{proposition}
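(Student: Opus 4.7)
The plan is to construct $W(A,g)$ by \emph{push-forward} along a $*$-homomorphism $g:B\to B'$, in direct analogy with the contravariant case just established, and then to verify compatibility with the structure (order, addition, composition). Concretely, for every c.p.c. order zero map $\phi:A\to M_\infty(B)$ I would set
$$g_*(\phi) := g^{(\infty)}\circ\phi: A\longrightarrow M_\infty(B'),$$
where $g^{(\infty)} = g\otimes\id_{M_\infty}$, and then define $W(A,g)([\phi]) := [g_*(\phi)]$.

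First I would verify that $g_*(\phi)$ is again c.p.c. order zero. Since $g^{(\infty)}$ is a $*$-homomorphism between local \Cs-algebras, it is in particular c.p.c., and composition of c.p.c. maps is c.p.c.; for the order zero property, if $a,a'\in A^+$ satisfy $aa'=0$, then $\phi(a)\phi(a')=0$ by the order zero property of $\phi$, and hence $g_*(\phi)(a)g_*(\phi)(a') = g^{(\infty)}(\phi(a)\phi(a')) = 0$.

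Next I would show well-definedness and order preservation. If $\phi\precsim\psi$ in $W(A,B)$, pick $\seq b\subset M_\infty(B)$ with $\norm{b_n^*\psi(a)b_n - \phi(a)}\to 0$ for every $a\in A$. Applying the $*$-homomorphism $g^{(\infty)}$ (which is contractive) yields
$$\norm{g^{(\infty)}(b_n)^*g_*(\psi)(a)g^{(\infty)}(b_n) - g_*(\phi)(a)} \longrightarrow 0,$$
so $g_*(\phi)\precsim g_*(\psi)$. This shows simultaneously that $[g_*(\phi)]$ is independent of the representative and that $W(A,g)$ is order-preserving. Additivity follows because $g^{(\infty)}$ is a $*$-homomorphism, hence respects $\hoplus$: writing out $(\phi\hoplus\psi)(a) = \phi(a)\oplus\psi(a)$ inside $M_2(M_\infty(B))\cong M_\infty(B)$ and applying $g^{(\infty)}$ componentwise gives $g_*(\phi\hoplus\psi) = g_*(\phi)\hoplus g_*(\psi)$, so $W(A,g)$ is a morphism of ordered Abelian monoids.

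Finally, functoriality $W(A,\id_B) = \id_{W(A,B)}$ and $W(A,g\circ h) = W(A,g)\circ W(A,h)$ are immediate from $(\id_B)^{(\infty)} = \id$ and the composition identity $(g\circ h)^{(\infty)} = g^{(\infty)}\circ h^{(\infty)}$. The only mild subtlety is checking that the ampliation $g^{(\infty)}$ restricts correctly to the \emph{local} \Cs-algebras $M_\infty(B)$ and $M_\infty(B')$; this is automatic since $g^{(\infty)}$ sends finite matrices over $B$ to finite matrices over $B'$ of the same size. No step requires a genuine new idea beyond those already used for $W(\cdot,B)$; the whole argument is dual to the contravariant case, with pre-composition replaced by post-composition.
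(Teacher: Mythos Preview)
Your proposal is correct and follows essentially the same approach as the paper: define $g_*(\phi)=g^{(\infty)}\circ\phi$, then push the witnessing sequence $\seq b$ through $g^{(\infty)}$ (using contractivity) to obtain $g_*(\phi)\precsim g_*(\psi)$. If anything, your write-up is more thorough than the paper's, which leaves the verification of additivity and the functoriality axioms implicit.
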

\begin{proof} Let $B$ and $B'$ be arbitrary local \Cs-algebras. Take a
$*$-homomorphism $g\in\Hom(B, B')$ and a
c.p.c. order zero $\psi$ between $A$ and $M_\infty(\A
B)$. Define $g_*(\psi)$ such that the diagram
	$$\xymatrix@R=0.5em@C=3em{%
		     & M_\infty(B)\ar[dd]^{g^{(\infty)}} \\
		A \ar[ru]^-{\psi}\ar[rd]_-{g_*(\psi)} & \\
		     & M_\infty(B')
	}$$
commutes, i.e. define
	$$g_*(\psi) := g^{(\infty)}\circ\psi.$$
Such map is clearly completely positive with the order zero property and
well-defined; therefore, the above line defines a push-forward between c.p.c.
order zero maps that gives rise to the semigroup homomorphism
	$$W(A, g)([\psi]) = [g_*(\psi)],\qquad\text{ for all}[\psi]\in W(A,B).$$
If $W(A,\ \cdot\ )$ denotes the functor $B\mapsto W(A, B)$, where $B$
is any local \Cs-algebra, then the above definition implies that the diagram
	$$\xymatrix{%
		B           \ar[r]^g\ar[d]_{W(A,\ \cdot\ )}  &
		B'          \ar[d]^{W(A,\ \cdot\ )}            \\
		W(A,B)   \ar[r]^{W(A, g)}                 &
		W(A, B')
	}$$
commutes. To see that such map preserves the order consider another c.p.c. order
zero map $\phi:A\to M_\infty(B)$ such that $\phi\precsim\psi$. Then there exists
a sequence $\seq b\subset M_\infty(B)$ such that
$\norm{b_n^*\psi(a)b_n-\phi(a)}\to0$ for any $a\in A'$. Since $g$ is necessarily
contractive, the sequence $\{g^{(\infty)}(b_n)\}_{n\in\IN}\subset M_\infty(B')$
witnesses the relation $g_*(\phi)\precsim g_*(\psi)$.
\end{proof}


\subsection{Stability} It follows directly from the definition of the bivariant
Cuntz semigroup that matrix stability holds trivially on the second argument,
namely
	$$W(A,M_n(B))\cong W(A,B),\qquad\forall n\in\IN,$$
for any pair of local \Cs-algebras $A$ and $B$. This is just a special instance
of the more general stability property
	$$W(A,\Mi(B))\cong W(A,B),$$
which also follows from the definition of the bifunctor $W(\_,\_)$. On the other hand,
matrix stability on the first argument is not as immediate. In order to
establish this property, we first record some technical results.

\begin{proposition}\label{prop:compositions} Let $A$, $B$ and $C$ be local
\Cs-algebras, and let $\phi,\psi:A\to B$, $\eta,\theta:B\to C$ be c.p.c. order
zero maps such that $\phi\precsim\psi$ and $\eta\precsim\theta$. Then
$\eta\circ\phi\precsim\eta\circ\psi$ and $\eta\circ\phi\precsim\theta\circ\phi$.
\end{proposition}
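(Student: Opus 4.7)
The second subequivalence $\eta\circ\phi\precsim\theta\circ\phi$ is essentially immediate from the definition. Any sequence $\seq c\subset C$ witnessing $\eta\precsim\theta$ satisfies $\norm{c_n^*\theta(y)c_n-\eta(y)}\to 0$ for every $y\in B$, and specializing to $y=\phi(a)$ with $a\in A$ gives the required approximation for the composed maps.

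For the non-trivial part $\eta\circ\phi\precsim\eta\circ\psi$, the plan is to work with formulation (ii) of Proposition~\ref{prop:subeq}. Fix a finite $F\Subset A$ and $\eps>0$; by hypothesis there exists $b\in B$ with $\norm{b^*\psi(a)b-\phi(a)}<\eps/3$ for every $a\in F$. Applying the contractive map $\eta$ yields $\norm{\eta(b^*\psi(a)b)-\eta(\phi(a))}<\eps/3$. Using the decomposition $\eta=h_\eta\pi_\eta$ provided by Corollary~\ref{cor:structure}, with $h_\eta\in\mathcal{M}(C^*(\eta(B)))$ and $\pi_\eta:B\to\mathcal{M}(C^*(\eta(B)))\cap\{h_\eta\}'$, the commutation of $h_\eta$ with $\pi_\eta(B)$ allows one to rewrite
\[
\eta(b^*\psi(a)b)=h_\eta\pi_\eta(b)^*\pi_\eta(\psi(a))\pi_\eta(b)=\pi_\eta(b)^*\eta(\psi(a))\pi_\eta(b),
\]
and therefore $\norm{\pi_\eta(b)^*\eta(\psi(a))\pi_\eta(b)-\eta(\phi(a))}<\eps/3$ uniformly on $F$.

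The remaining task is to replace the abstract multiplier $\pi_\eta(b)$ by an honest element of $C$ while preserving the sandwich estimate. Pick an approximate unit $\{e_\lambda\}\subset C^*(\eta(B))$; since each $\eta(\psi(a))$ lies in $C^*(\eta(B))$, one can take $\lambda$ large enough that
\[
\norm{\pi_\eta(b)^*e_\lambda\eta(\psi(a))e_\lambda\pi_\eta(b)-\pi_\eta(b)^*\eta(\psi(a))\pi_\eta(b)}\leq\norm{\pi_\eta(b)}^2\norm{e_\lambda\eta(\psi(a))e_\lambda-\eta(\psi(a))}<\eps/3
\]
for every $a\in F$. The product $e_\lambda\pi_\eta(b)$ then belongs to $C^*(\eta(B))\subset\tilde C$; since $C$ is local and $\eta(B)\subset C$, every element of $C^*(\eta(B))$ is a norm limit of polynomials in finitely many elements of $\eta(B)$, all of which are automatically in $C$. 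Hence $e_\lambda\pi_\eta(b)$ can be approximated arbitrarily well in norm by some $c\in C$; choosing this approximation sufficiently fine makes $\norm{c^*\eta(\psi(a))c-(e_\lambda\pi_\eta(b))^*\eta(\psi(a))(e_\lambda\pi_\eta(b))}<\eps/3$ throughout $F$. A triangle inequality then gives $\norm{c^*\eta(\psi(a))c-\eta(\phi(a))}<\eps$ for all $a\in F$, and Proposition~\ref{prop:subeq} delivers $\eta\circ\phi\precsim\eta\circ\psi$.

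The principal technical obstacle is precisely the last step: $\pi_\eta(b)$ is a priori only a multiplier of $C^*(\eta(B))$, and one must push it inside the local \Cs-algebra $C$ without losing the approximate intertwining. The combination of an approximate unit of $C^*(\eta(B))$ (to bring everything back into the algebra) with the density of $C$ in its completion (which follows from locality since polynomials in $\eta(B)$ live in $C$) is what makes the argument work uniformly on finite subsets of $A$.
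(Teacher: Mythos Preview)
Your proof is correct and follows essentially the same strategy as the paper: both use the structure decomposition $\eta=h_\eta\pi_\eta$ to rewrite $\eta(b^*\psi(a)b)=\pi_\eta(b)^*\eta(\psi(a))\pi_\eta(b)$, then cut down the multiplier $\pi_\eta(b)$ with an approximate unit of $C^*(\eta(B))$ and perturb into the dense local subalgebra $C$. The only cosmetic difference is that you work with the finite-subset formulation~(ii) of Proposition~\ref{prop:subeq} while the paper runs the same estimates along a sequence, but the underlying ideas and the handling of the main technical obstacle (pushing $\pi_\eta(b)$ from $\mathcal M(C^*(\eta(B)))$ into $C$) are identical.
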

\begin{proof} The second implication is trivial. To prove the first one, let $\seq e\subset C^*(\eta(B))\cap C$ be an
approximate unit, and let $\pi_\eta$ be the support $*$-homomorphism of $\eta$.
If $\seq b\subset B$ is any sequence that witnesses $\phi\precsim\psi$, then the
sequence $\seq{d}\subset C^*(\eta(B))$ given by $d_n:=e_n\pi_\eta(b_n)$ for any
$n\in\IN$ can be perturbed into a sequence $\seq c$ into the dense subalgebra
$C^*(\eta(B))\cap C$ with the property that $\norm{d_n-c_n}<\frac1n$. Hence
	\begin{align*}
		\norm{c_n(\eta\circ\psi)(a)c_n^*-(\eta\circ\phi)(a)}
			&=\norm{c_n(\eta\circ\psi)(a)c_n^*-d_n(\eta\circ\psi)(a)d_n^*}\\
			&\qquad+\norm{e_n\eta(b_n\psi(a)b_n^*)e_n^*-(\eta\circ\phi)(a)}\\
			&\leq
			\tfrac2n+\norm{e_n\eta(b_n\psi(a)b_n^*)e_n^*-e_n(\eta\circ\phi)(a)e_n^*}\\
			&\qquad+\norm{e_n(\eta\circ\phi)(a)e_n^* - (\eta\circ\phi)(a)}\\
			&\leq\tfrac 2n+\norm{b_n\psi(a)b_n^*-\phi(a)}\\
			&\qquad+\norm{e_n(\eta(\phi(a)))e_n^*-\eta(\phi(a))}.
	\end{align*}
	Since this last term tends to 0 as $n\to\infty$, for every $a\in A$,
	we have that $\eta\circ\phi\precsim\eta\circ\psi$.
\end{proof}

\begin{lemma}\label{lem:tensorcse} Let $A$, $B$, $C$ and $D$ be local
\Cs-algebras, and let $\phi,\psi:A\to B$, $\eta:C\to D$ be c.p.c. order zero
maps such that $\phi\precsim\psi$. Then
$\eta\otimes\phi\precsim\eta\otimes\psi$.
\end{lemma}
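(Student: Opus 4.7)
The plan is to witness $\eta\otimes\phi \precsim \eta\otimes\psi$ by combining a sequence implementing $\phi\precsim\psi$ with an approximate-unit element for the image of $\eta$, in the spirit of the argument for Proposition \ref{prop:compositions}. Before doing so one has to check that $\eta\otimes\phi$ and $\eta\otimes\psi$ are genuine c.p.c.~order zero maps $C\otimes A \to D\otimes B$; this follows from Corollary \ref{cor:structure} by passing to the \Cs-completions via Proposition \ref{prop:cpclocal}, taking the (spatial) tensor product of the order zero maps there, and restricting back to the local tensor product $C\otimes A$.

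Fix a sequence $\seq b\subset B$ witnessing $\phi\precsim\psi$. I would verify $\eta\otimes\phi\precsim\eta\otimes\psi$ through the finite-subset reformulation (ii.) of Proposition \ref{prop:subeq}: given $F\Subset C\otimes A$ and $\eps>0$, by linearity it suffices to deal with finitely many elementary tensors $\{c_i\otimes a_j\}$ whose span contains good approximants to the elements of $F$. I would then choose a single approximate-unit element $e\in C^*(\eta(c_1),\ldots,\eta(c_N))\subset D$ (contained in $D$ by locality of $D$ and finiteness of $\{c_i\}$) with $\norm{e\eta(c_i)e-\eta(c_i)}<\eps'$ for all $i$, and pick $n$ large enough that $\norm{b_n^*\psi(a_j)b_n-\phi(a_j)}<\eps'$ for all $j$. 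The candidate $d:=e\otimes b_n\in D\otimes B$ would then satisfy
\begin{align*}
\norm{d^*(\eta\otimes\psi)(c_i\otimes a_j)d - (\eta\otimes\phi)(c_i\otimes a_j)}
&\leq \norm{\eta(c_i)}\,\eps' + \eps'\,\norm{\phi(a_j)},
\end{align*}
by adding and subtracting $e\eta(c_i)e\otimes\phi(a_j)$ and applying the cross-norm inequality together with $\norm{e\eta(c_i)e}\leq\norm{\eta(c_i)}$. Choosing $\eps'$ small relative to $\max_i\norm{\eta(c_i)}$, $\max_j\norm{\phi(a_j)}$ and $\eps$ yields the desired estimate, first on each elementary tensor and then on all of $F$ by linearity.

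The main obstacle I foresee is the passage from elementary tensors to general elements of $F$: one has to control the perturbation uniformly there, which is handled by linearity together with the approximability of each element of $C\otimes A$ by finite sums of elementary tensors in the chosen tensor product norm. A minor but essential related point is that the approximate-unit element $e$ can be taken inside $D$ rather than inside its completion, which is precisely where locality of $D$ enters, just as in the proof of Proposition \ref{prop:compositions}.
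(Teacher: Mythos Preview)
Your proposal is correct and follows essentially the same approach as the paper: the paper's proof simply states that if $\seq b\subset B$ witnesses $\phi\precsim\psi$ and $\seq e\subset D$ is an approximate unit, then $\{e_n\otimes b_n\}_{n\in\IN}$ witnesses $\eta\otimes\phi\precsim\eta\otimes\psi$. You have unpacked this same idea through the finite-set formulation of Proposition~\ref{prop:subeq}, with the added care of locating the approximate-unit element inside a finitely generated \Cs-subalgebra of $D$ via locality, and of justifying that the tensor products are well-defined c.p.c.\ order zero maps---details the paper leaves implicit.
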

\begin{proof} If $\seq b\subset B$ is the sequence that witnesses the Cuntz
subequivalence between $\phi$ and $\psi$, then $\{e_n\otimes b_n\}_{n\in\IN}$,
where $\seq e\subset D$ is an approximate unit, witnesses the sought Cuntz
subequivalence between $\eta\otimes\phi$ and $\eta\otimes\psi$.
\end{proof}

\begin{corollary}\label{cor:stabilitycse} Let $A$ and $B$ be local \Cs-algebras,
and let $\phi,\psi:A\to B$ be c.p.c. order zero maps. One has $\phi\precsim\psi$
in $B$ if and only if $\phi\otimes\id_\Mi\precsim\psi\otimes\id_\Mi$ in
$B\otimes\Mi$. The same holds true with $K$ in place of $\Mi$.
\end{corollary}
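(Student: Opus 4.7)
The plan is to split the biconditional, handling the forward direction by direct appeal to Lemma \ref{lem:tensorcse} and the converse by compression to a corner.

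For the forward direction, I would take $\eta := \id_\Mi$ in Lemma \ref{lem:tensorcse} (this is a $*$-homomorphism, hence a c.p.c.~order zero map between local \Cs-algebras), with $C=D=\Mi$. The lemma then gives $\id_\Mi\otimes\phi \precsim \id_\Mi\otimes\psi$; after composing with the flip isomorphism $\Mi\otimes B \cong B\otimes\Mi$, this is exactly $\phi\otimes\id_\Mi \precsim \psi\otimes\id_\Mi$. The same argument works verbatim with $K$ in place of $\Mi$ since $\id_K$ is also c.p.c.~order zero.

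For the converse, the idea is to recover a witness in $B$ by passing to the $(1,1)$-corner. Given a sequence $\seq{b}\subset B\otimes\Mi = M_\infty(B)$ such that
$$\norm{b_n^*((\psi\otimes\id_\Mi)(x))b_n - (\phi\otimes\id_\Mi)(x)} \to 0, \qquad \forall x\in A\otimes\Mi,$$
I would specialise $x = a\otimes e_{11}$ for an arbitrary $a\in A$, where $e_{11}$ is a rank-one matrix unit of $\Mi$. Writing $b_n$ as a matrix $(b_{n,ij})$ over $B$ and inspecting the $(1,1)$-entry of both sides gives $\norm{b_{n,11}^*\psi(a)b_{n,11} - \phi(a)}\to 0$ for every $a\in A$; this uses only that compression by the rank-one projection is norm-contractive. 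Hence $\seq{b_{\_,11}}\subset B$ witnesses $\phi\precsim\psi$.

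The $K$-case is handled identically, with the only bookkeeping subtlety being that the compressing rank-one projection $1\otimes e_{11}$ must be located in the multiplier algebra $\mathcal{M}(B\otimes K)$ rather than in $B\otimes K$; the corner $(1\otimes e_{11})(B\otimes K)(1\otimes e_{11})$ is canonically isomorphic to $B$, and $c_n := (1\otimes e_{11})b_n(1\otimes e_{11})$ provides the required sequence. I do not anticipate any real obstacle: the entire argument reduces to the previous lemma plus routine matrix-unit computations, with the mildest care needed to ensure that the extracted entries genuinely live in $B$.
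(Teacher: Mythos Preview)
Your proposal is correct and follows essentially the same route as the paper: the forward direction is Lemma \ref{lem:tensorcse} with $\eta=\id_\Mi$ (resp.\ $\id_K$), and the converse is obtained by compressing to the corner picked out by a minimal projection. The paper phrases the compression via the embedding $\iota:b\mapsto b\otimes e$ and its inverse, writing $x_n=(1_{B^+}\otimes e)b_n(1_{B^+}\otimes e)$ and then pulling back through $\iota^{-1}$, whereas you extract the $(1,1)$ matrix entry directly; these are the same operation, and your remark that in the $K$ case the projection $1\otimes e_{11}$ lives in $\mathcal M(B\otimes K)$ matches the paper's use of $1_{B^+}\otimes e$.
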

\begin{proof} The fact that
$\phi\precsim\psi$ implies $\phi\otimes\id_\Mi\precsim\psi\otimes\id_\Mi$
follows from the previous lemma. For the other implication observe that $B$
embeds into $B\otimes\Mi$ by means of the injective map $b\stackrel\iota\mapsto
b\otimes e$, where $e\in\Mi$ is any minimal projection. Fix $\seq b\subset
B\otimes\id_\Mi$ being the sequence that witnesses the relation
$\phi\otimes\id_\Mi\precsim\psi\otimes\id_\Mi$. Then, with $x_n:=(1_{B^+}\otimes
e)b_n(1_{B^+}\otimes e)\in B\otimes \{e\}$, where $1_{B^+}$ is either the unit
of $B$, or that of its minimal unitization $B^+$ in the non-unital case, we have
	$$\norm{x_n^*(\psi(a)\otimes e)x_n-\phi(a)\otimes e}\to 0,\qquad\forall a\in\
	A.$$
It can be pulled back to $B$ through $\iota$ giving
	$$\norm{\iota^{-1}(x_n)^*\psi(a)\iota^{-1}(x_n)-\phi(a)}\to 0,\qquad\forall
	a\in A,$$
whence $\phi\precsim\psi$. The same argument works with $K$ in place of $\Mi$.
\end{proof}

Recall that for any isomorphism $\gamma:M_\infty\otimes M_\infty \to M_\infty$ there is an isometry $v\in B(\ell^2(\IN))$ such that $\Ad_v\circ\gamma\circ(\id_{M_\infty}\otimes e) = \id_{M_\infty}$, where $e\in M_\infty$ is any minimal projection. The same holds true with $K$ in place of $M_\infty$.

\begin{proposition}\label{prop:stability} Let $A$ and $B$ be local \Cs-algebras.
Then $W(M_\infty(A), B)\cong W(A,B)$ and $W(A\otimes K,B\otimes K)\cong W(A,
B\otimes K)$.
\end{proposition}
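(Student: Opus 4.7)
Plan: I will construct explicit mutually inverse maps and verify they are well-defined. For the first isomorphism, fix an isomorphism $\gamma \colon M_\infty(B) \otimes M_\infty \to M_\infty(B)$, let $\iota \colon A \hookrightarrow M_\infty(A)$ be the corner embedding $a \mapsto a \otimes e_{11}$, and define
$$\alpha \colon W(A,B) \to W(M_\infty(A), B), \quad [\phi] \mapsto [\gamma \circ (\phi \otimes \id_{M_\infty})],$$
with inverse candidate $\beta([\psi]) := [\psi \circ \iota]$. Well-definedness of $\alpha$ follows from Lemma \ref{lem:tensorcse}, and of $\beta$ from restricting any witnessing sequence to $\iota(A)$; both are semigroup homomorphisms by a direct check of $\hoplus$.

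The composition $\beta \circ \alpha = \id$ follows quickly: $\beta \alpha([\phi])$ is represented by $a \mapsto \gamma(\phi(a) \otimes e_{11})$, and the remark preceding the proposition provides an isometry $v$ with $\gamma(y \otimes e_{11}) = v^* y v$ for all $y \in M_\infty(B)$. Multiplying $v$ by an approximate unit of $M_\infty(B)$ produces an approximating sequence in the algebra that witnesses $\gamma \circ (\phi \otimes e_{11}) \sim \phi$.

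The non-trivial direction is $\alpha \circ \beta = \id$. Given c.p.c. order zero $\psi \colon M_\infty(A) \to M_\infty(B)$ with structure decomposition $\psi = h_\psi \pi_\psi$ via Corollary \ref{cor:structure}, set $\psi_{11} := \psi \circ \iota$. Since $\pi_\psi$ is non-degenerate onto $C^*(\psi(M_\infty(A)))$ (an approximate unit $\{e_n\}$ of $M_\infty(A)$ yields $h_\psi \pi_\psi(e_n a) \to \psi(a)$), it extends strictly to a $*$-homomorphism on $\mathcal M(M_\infty(A))$ commuting with $h_\psi$. Under this extension, the partial isometries $s_i := 1_{A^+} \otimes e_{i1}$ have images $W_i := \pi_\psi(s_i)$ satisfying $W_i^* W_j = \delta_{ij} p$ with $p := \pi_\psi(1_{A^+} \otimes e_{11})$, and the identity $s_i(a \otimes e_{11})s_j^* = a \otimes e_{ij}$ transports to the key matrix identity
$$\psi(a \otimes e_{ij}) \;=\; h_\psi W_i \pi_\psi(a \otimes e_{11}) W_j^* \;=\; W_i \psi_{11}(a) W_j^*.$$
This exhibits $\psi$ as a compression of $\gamma \circ (\psi_{11} \otimes \id_{M_\infty})$ by the multiplier ``row'' $(W_1, W_2, \dots)$, and conversely $\gamma \circ (\psi_{11} \otimes \id_{M_\infty})$ as a compression of $\psi$ by its adjoint. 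Given $F \Subset M_\infty(A)$ and $\eps > 0$, choose $N$ with $F \subset M_N(A)$; the contractive multiplier $V_N := \sum_{i=1}^N W_i \otimes e_{1i}$ then intertwines the two maps on $F$ in both directions. Cutting $V_N$ down by a sufficiently large element of an approximate unit of $M_\infty(B) \otimes M_\infty$ yields bounded witnesses inside the algebra, and Proposition \ref{prop:subeq}(ii) delivers $\psi \sim \gamma \circ (\psi_{11} \otimes \id_{M_\infty})$.

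The second isomorphism $W(A \otimes K, B \otimes K) \cong W(A, B \otimes K)$ follows by the same template, using the matrix units $s_i := e_{i1} \in K$ (which, viewed inside $\mathcal M(K)$, satisfy $\sum_i s_i s_i^* = 1$ strictly) in place of $1_{A^+} \otimes e_{i1}$, together with the $K$-version of the isometry identity in the remark preceding the proposition. The principal obstacle throughout is the management of the multiplier algebra in the $\alpha \circ \beta = \id$ step: the $W_i$, hence also the intertwiners $V_N$, generally lie outside of $M_\infty(B) \otimes M_\infty$, so bounded witnesses must be extracted by a uniform approximation argument on finite subsets of $M_\infty(A)$, which is precisely what Proposition \ref{prop:subeq}(ii) accommodates.
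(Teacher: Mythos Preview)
Your argument is essentially correct but takes a different, more hands-on route than the paper, and one step is stated imprecisely. The paper never invokes the structure theorem for $\psi$: it shows $W(A,B)\cong W(M_\infty(A),M_\infty(B))$ purely by manipulating compositions of maps up to Cuntz equivalence. The only ingredients beyond Proposition~\ref{prop:compositions} and Lemma~\ref{lem:tensorcse} are the flip $\id_{M_\infty}\otimes e\sim e\otimes\id_{M_\infty}$ and the equivalence $\gamma\circ(\id_{M_\infty}\otimes e)\sim\id_{M_\infty}$ furnished by the isometry $v$; both composites $\beta\alpha$ and $\alpha\beta$ are then rewritten symbolically to the identity with no strict extensions, no multiplier bookkeeping, and no approximate-unit cut-downs. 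The $K$-case follows by the identical chain.

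Your approach---extending $\pi_\psi$ strictly, setting $W_i=\pi_\psi(1_{A^+}\otimes e_{i1})$, and deriving $\psi(a\otimes e_{ij})=W_i\,\psi_{11}(a)\,W_j^*$---is sound and has the merit of making the mechanism explicit at the level of matrix entries. However, your $V_N=\sum_i W_i\otimes e_{1i}$ lives in (a multiplier of) $M_\infty(B)\otimes M_\infty$, so what it actually intertwines is $\psi\otimes e_{11}$ with $\psi_{11}\otimes\id_{M_\infty}$, \emph{not} $\psi$ with $\gamma\circ(\psi_{11}\otimes\id_{M_\infty})$ as you state. One still has to push the resulting equivalence through the isomorphism $\gamma$ and use the isometry $v$ once more (via $\gamma\circ(\psi\otimes e_{11})\sim\psi$) to land back in $M_\infty(B)$. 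This is an easy fix, and is precisely the step that the paper's symbolic method absorbs---which accounts for its brevity.
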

\begin{proof} Since the isomorphism $\gamma:M_\infty\otimes M_\infty\to
M_\infty$ induces a semigroup isomorphism $W(A,M_\infty(B))\cong W(A,B)$, it is
enough to show that one has $W(A,B)\cong W(M_\infty(A),M_\infty(B))$. To this end, we will use the fact that if $e$ is a minimal projection in $M_\infty$, then there exists an isometry $v\in B(H)$ such that $v^*\gamma\circ(id_{M_\infty}\otimes e)v=id_{M_\infty}$. Namely, they are Cuntz equivalent.

In this case, mutual
inverses are then given by the maps
	$$[\phi]\mapsto[\phi\otimes\id_{M_\infty}],\qquad[\phi]\in W(A,B)$$
and
	$$[\Phi]\mapsto[(\id_B\otimes\gamma)\circ\Phi\circ(\id_A\otimes e)], \qquad\Phi\in W(M_\infty(A),M_\infty(B)).$$

Indeed, by making use of
Proposition \ref{prop:compositions} and Lemma \ref{lem:tensorcse} above, one has
	\begin{align*}
		(\id_B\otimes\gamma)\circ(\phi\otimes\id_{M_\infty})\circ(\id_A\otimes e) &=
		(\id_B\otimes\gamma)\circ(\phi\otimes e)\\
			&= (\id_B\otimes\gamma)\circ(\id_B\otimes\id_{M_\infty}\otimes
			e)\circ\phi\\
			&\sim_\Cu (\id_B\otimes\id_{M_\infty})\circ\phi\\
			& =\phi
	\end{align*}
	and
	\begin{align*}
		((\id_B\otimes\gamma)\circ\Phi\circ(\id_A\otimes e))&\otimes\id_{M_\infty}=\\
			&=
			(\id_B\otimes\gamma\otimes\id_{M_\infty}) \circ
			(\Phi\otimes\id_{M_\infty})\circ(\id_A\otimes
			e\otimes\id_{M_\infty})\\
			&\sim_\Cu
			(\id_B\otimes\gamma\otimes\id_{M_\infty}) \circ
			(\Phi\otimes\id_{M_\infty})\circ(\id_A\otimes\id_{M_\infty}\otimes
			e)\\
			&= (\id_B\otimes\gamma\otimes\id_{M_\infty})\circ(\Phi\otimes e)\\
			&= (\id_B\otimes\gamma\otimes\id_{M_\infty}) \circ
			(\id_B\otimes\id_{M_\infty}\otimes\id_{M_\infty}\otimes e)\circ\Phi\\
			&\sim_\Cu (\id_B\otimes\id_{M_\infty}\otimes\id_{M_\infty})\circ(\id_B\otimes\id_{M_\infty}\otimes e\otimes \id_{M_\infty})\circ\Phi\\
			&\sim_\Cu(\id_B\otimes\id_{M_\infty}\otimes \id_{M_\infty})\circ\Phi\\
			&= \Phi,
	\end{align*}
	which become equalities at the level of the Cuntz classes. The result
	involving $K$ follows in a similar manner.
\end{proof}

\begin{remark}\label{rem:Repre} As a corollary of the above result, we have that every c.p.c. order zero map
$\Phi:A\otimes K\to B\otimes K$ is Cuntz-equivalent to a $K$-ampliation of a
c.p.c. order zero map $\phi:A\to B\otimes K$, that is, $\Phi \sim
\phi\otimes\id_K$, up to the identification $K\otimes K\cong K$.
\end{remark}

We conclude this section by observing that one does not have the semigroup isomorphism $W(A,B\otimes K)\cong
W(A,B)$ in general, unless $B$ is a stable \Cs-algebra. An easy counterexample
is the case $A,B = \CC$, where $W(\CC,\CC\otimes K)\cong
W(K)\cong\IN_0\cup\{\infty\}\neq \IN_0\cong W(\CC) \cong W(\CC,\CC)$.


\subsection{The bivariant functor $WW(\_,\_)$} Switching our attention to \Cs-algebras
rather than local \Cs-algebras, we now introduce a new bifunctor, denoted by $WW(\_,\_)$, over the
bicategory ${\cat C^*}\op\times \cat C^*$, by setting
	$$WW(A,B) := W(A\otimes K,B\otimes K).$$
It can be considered as the analogue of the stabilization, $\Cu(\_)$, of the
ordinary functor $W(\_)$. Note that through Proposition \ref{prop:stability}
that stabilization is only needed in the second argument, so that we actually
have the equivalent characterization
	$$WW(A,B) \cong W(A, B\otimes K).$$
We formalize these considerations as follows.

\begin{definition} Let $A$ and $B$ be \Cs-algebras. The bivariant Cuntz
semigroup $WW(A,B)$ is the set of equivalence classes
		$$WW(A,B) = \{\phi:A\otimes K\to B\otimes K\ |\ \phi\text{ is c.p.c. order
		zero}\}/\sim,$$
	endowed with the binary operation given by the direct sum $\hoplus$.
\end{definition}

In the above definition, the sequences witnessing (sub)equivalence are of course
required to be in $B\otimes K$ rather than in $M_\infty(B)$. It is also easy to
check that all the properties of the bifunctor $W(\_,\_)$ that have been
observed extend naturally to this new bifunctor $WW(\_,\_)$. However, an
advantage of such description is that matrix stability on both arguments holds
trivially as a consequence of the more general stability property; namely,
	$$WW(A\otimes K,B\otimes K)\cong WW(A,B),$$
 holds for any pair of \Cs-algebras $A$ and $B$.


\subsection{The Module Picture}

Similarly to the ordinary Cuntz semigroup and KK-theory, the bivariant Cuntz
semigroup $WW(A,B)$ of the previous section can be formulated in terms of Hilbert
modules.

\begin{definition}[Order zero pair] Let $A$ and $B$ be \Cs-algebras. An $A$-$B$
order zero pair is a pair $(X,\phi)$ consisting of a countably generated Hilbert
$B$-module $X$ and a non-degenerate c.p.c. order zero map $\phi:A\to K(X)$.
\end{definition}

If $(X,\phi)$ and $(Y,\psi)$ are $A$-$B$ order zero pairs, we say that
$(X,\phi)$ is Cuntz subequivalent to $(Y,\psi)$, $(X,\phi)\precsim(Y,\psi)$ in
symbols, if there exists a sequence $\seq s\in K(X,Y)$ such that
	$$\lim_{n\to\infty}\norm{s_n^*\psi(a)s_n-\phi(a)} = 0,$$
for all $a\in A$. The antisymmetrization of such a subequivalence relation gives
an equivalence relation, namely $(X,\phi)\sim(Y,\psi)$ if
$(X,\phi)\precsim(Y,\psi)$ and $(Y,\psi)\precsim(X,\phi)$.
\begin{definition}
For separable \Cs-algebras $A$ and $B$, we define
		$$\mathcal WW(A,B):=\{A\otimes K-B\otimes K\text{ order zero pairs}\}/\sim,$$
	endowed with the binary operation arising from the direct sum of pairs, i.e.
		$$[(X,\phi)]+[(Y,\psi)] := [(X\oplus Y,\phi\hoplus\psi)].$$
\end{definition}

The above definition can be taken to provide a module picture for the bivariant
Cuntz semigroup $WW(A,B)$, as it is described in the following result.

\begin{theorem} For any pair of separable \Cs-algebras $A$ and $B$, there is a
natural isomorphism $$WW(A,B)\cong\mathcal WW(A,B).$$
\end{theorem}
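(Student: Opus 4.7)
The plan is to construct two mutually inverse semigroup homomorphisms $\Phi: WW(A,B) \to \mathcal{WW}(A,B)$ and $\Psi: \mathcal{WW}(A,B) \to WW(A,B)$, following the blueprint of the translation between $\mathrm{Cu}(D)$ and its Hilbert module picture from \cite{cei2008}, enhanced to accommodate c.p.c.~order zero maps in place of positive elements. Naturality in both arguments will then be immediate from the functoriality of the constructions.

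For $\Phi$, given a c.p.c.~order zero map $\phi: A \otimes K \to B \otimes K$, I would assign the countably generated Hilbert $B \otimes K$-module
$$X_\phi := \overline{\phi(A \otimes K) \cdot (B \otimes K)},$$
equipped with the inner product $\langle x, y \rangle = x^*y$ inherited from $B \otimes K$. Using Corollary \ref{cor:structure} to decompose $\phi(a) = h_\phi \pi_\phi(a)$ with $h_\phi$ and $\pi_\phi$ commuting on the relevant support, left multiplication by $\phi(a)$ is shown to define a compact adjointable operator on $X_\phi$, yielding a corestriction $\tilde\phi: A \otimes K \to K(X_\phi)$ which inherits the c.p.c.~order zero property and is non-degenerate by the very definition of $X_\phi$. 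Set $\Phi([\phi]) := [(X_\phi, \tilde\phi)]$.

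For $\Psi$, given an order zero pair $(X, \phi)$, I would invoke Kasparov's stabilization theorem to embed $X$ as an orthogonally complemented submodule of the standard module $\mathcal{H}_{B \otimes K}$, and then exploit that $B \otimes K$ is stable to identify $K(\mathcal{H}_{B \otimes K}) \cong B \otimes K$. This produces an inclusion $\iota: K(X) \hookrightarrow B \otimes K$, and I set $\Psi([(X, \phi)]) := [\iota \circ \phi]$. Independence from the choice of embedding follows because any two stabilizations differ by a unitary on $\mathcal{H}_{B \otimes K}$, and conjugation by such a unitary preserves the Cuntz class of $\iota \circ \phi$. Additivity of both assignments follows from the canonical identification $X_{\phi \hoplus \psi} \cong X_\phi \oplus X_\psi$.

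The crux of the argument is checking that $\Phi$ and $\Psi$ descend to well-defined maps on equivalence classes and are mutually inverse. For $\Phi$, if $\seq s \subset B \otimes K$ witnesses $\phi \precsim \psi$, I would multiply $s_n$ on the right by an approximate unit of the hereditary subalgebra generated by $\psi(A \otimes K)$ to produce elements of $K(X_\psi, X_\phi)$ realizing the subequivalence $(X_\phi, \tilde\phi) \precsim (X_\psi, \tilde\psi)$. Conversely, a witness $\seq s \subset K(X,Y)$ in the module picture descends, via the embeddings provided by $\Psi$, directly to a witness in $B \otimes K$. Applying $\Phi \circ \Psi$ to $[(X,\phi)]$ yields a pair whose underlying module is canonically isomorphic to $X$ through the stabilization embedding, and applying $\Psi \circ \Phi$ recovers $\phi$ up to the natural identification of $K(X_\phi)$ as a hereditary sub-\Cs-algebra of $B \otimes K$. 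The main obstacle, as in the analogous Cuntz semigroup case, is this back-and-forth conversion of witnesses; the additional subtlety compared to \cite{cei2008} is that the $s_n$'s in the algebra picture must be modified to act as compact module homomorphisms between potentially distinct modules $X_\psi$ and $X_\phi$, which is where the approximate-unit perturbation and the structure theorem do most of the work.
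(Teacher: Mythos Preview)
Your proposal is correct and follows essentially the same route as the paper's proof: both directions use Kasparov stabilization to identify $K(X)$ with a corner of $B\otimes K$, and the inverse assigns to $\phi$ the submodule $\overline{\phi(A\otimes K)\cdot H_{B\otimes K}}$ (you phrase this as a right ideal in $B\otimes K$, which is the same thing once one identifies $B\otimes K$ with $H_{B\otimes K}$ via stability). Your treatment of the witness conversion---cutting the $s_n$ by projections or approximate units to land in $K(X_\psi,X_\phi)$---is exactly what the paper does, only spelled out more carefully; the paper compresses by the range projections $p,q$ of the two modules inside $H_{B\otimes K}$, which is your approximate-unit trick in its limiting form.
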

\begin{proof} By Kasparov's stabilization theorem, one has the identifications
		$$K(X)\subset K(X\oplus H_B)\cong K(H_B)\cong B\otimes K.$$
	Observe that the map that sends an $A\otimes K$-$B\otimes K$ order zero pair
	$(X,\phi)$ to the c.p.c. order zero map
		$$\phi : A\otimes K\to K(X)\subset B\otimes K$$
	has the map that sends a c.p.c. order zero map $\phi:A\otimes K\to B\otimes K$
	to the pair
		$$(\overline{\phi(A\otimes K)H_B},\phi)$$
	as an inverse. In particular, both maps preserve Cuntz subequivalence.
	Indeed, let $(X,\phi)\precsim(Y,\psi)$, i.e. there exists a sequence $\seq s$ in $K(X,Y)$ such
	that $\norm{s_n^*\psi(a)s_n-\phi(a)}\to 0$ as $n\to\infty$ for all $a\in
	A\otimes K$. Concretely, $\seq s\subset K(X,Y)\subset K(X\oplus H_B,Y\oplus
	H_B)\subset K(H_B)\cong B\otimes K$; hence, up to this identification,
	$\norm{s_n^*\psi(a)s_n-\phi(a)}\to 0$ i.e. $\phi\precsim\psi$. Conversely,
	let $\phi\precsim\psi$, so that there exists $\seq z\subset B\otimes K\cong
	K(H_B)$ such that $\norm{z_n^*\phi(a)z_n-\psi(a)}\to 0$ for all $a\in
	A\otimes K$. Since $\overline{\phi(A\otimes K)H_B}$ and
	$\overline{\psi(A\otimes K)H_B}$ are countably generated Hilbert modules,
	there are projections $p,q\in B(H_B)$ such that $pH_B=\overline{\phi(A\otimes K)H_B}$
	and $qH_B=\overline{\psi(A\otimes K)H_B}$. Thus, the sequence $\seq w\subset
	K(\overline{\phi(A\otimes K)H_B},\overline{\psi(A\otimes K)H_B})$ given by
		$$w_n := pz_nq\in K(\overline{\phi(A\otimes K)H_B},\overline{\psi(A\otimes
		K)H_B})$$
	is such that $\norm{w_n^*\psi(a)w_n-\phi(a)}\to0$. This shows precisely that
	$(\overline{\phi(A\otimes K)H_B},\phi)$ is subequivalent to $(\overline{\psi(A\otimes
	K)H_B},\psi)$.
\end{proof}
	

\section{\label{sec:cats}Further Categorical Aspects}

One of the main drawback of the standard Cuntz semigroup is the lack of continuity under arbitrary inductive limits of the functor $W(\_)$ as an invariant for C*-algebras. This problem was remedied in \cite{cei2008} defining a richer category of semigroups, called $\Cu$, and considering the stabilized Cuntz semigroup, called $\Cu(\_)$, instead of the usual one. In particular, $\Cu(A)\cong W(A\otimes K)$ for any C*-algebra $A$. 

Further on, it is shown in \cite{apt2014} that, with a suitable choice of the source and target categories, the functor $W(\_)$ exhibits the \emph{good} functoriality properties required before. Indeed, if
one takes the category of \emph{local} \Cs-algebras  instead of the C*-algebras as the source, and the
category $\cat W$, introduced in \cite{apt2014}, as the target category, one
obtains such properties. In light of this result, it is natural to ask whether
the bivariant Cuntz semigroup functor $W(\_,\_)$ defined in the previous section
belongs to the category $\cat W$, and what properties it possesses if one can choose
this enriched category as the target for $W(\_,\_)$.

We start by recalling the definition of the category $\cat W$. To this end, and
following \cite{apt2014}, we introduce the notion of an \emph{auxiliary
relation} on a partially ordered Abelian monoid.

\begin{definition}[Auxiliary relation] Let $(S,\leq)$ be a partially ordered
monoid. An auxiliary relation on $S$ is a binary relation $\prec$ such that for $a,b,c,d \in S$:
	\begin{enumerate}[(i)]
		\item $a\prec b$ implies $a\leq b$.
		\item $a\leq b\prec c\leq d$ implies $a\prec d$.
		\item $0\prec a$.
	\end{enumerate}
\end{definition}

Let $(S,\leq,\prec)$ be a partially ordered Abelian monoid equipped with an
auxiliary relation $\prec$. Given an element $a\in S$, we adopt the notation
$a^\prec\subset S$ for the subset of $S$ generated by $a$ and the relation
$\prec$ as
	$$a^\prec:=\{x\in S\mid x\prec a\}.$$

\begin{definition}[Category $\cat{W}$]\label{W} Let $\cat{W}$ be the category
whose objects are partially ordered monoids $(S, \leq,\prec)$, equipped with an
auxiliary relation $\prec$, that satisfy the following properties.
	\begin{enumerate}[{\normalfont(WO.1)}]
		\item For any $a\in S$, $a^\prec$ is upward directed and it contains a
		cofinal $\prec$-increasing sequence (i.e., there is a sequence $a_1\prec
		a_2\prec\ldots \in a^\prec$ such that for each $b\prec a$, there is $k$ such
		that $b\leq a_k$);
		\item for any $a\in S$, $a^\prec$ admits a supremum and $\sup a^\prec = a$;
		\item if $a'\prec a$ and $b'\prec b$, then $a'+b'\prec a+b$ (i.e. $a^\prec
		+b^\prec\subset (a+b)^\prec$);
		\item $a^\prec+b^\prec$ is cofinal in $(a+b)^\prec$.
	\end{enumerate}
The morphisms of $\cat{W}$ are semigroup homomorphisms $\Phi:S\to T$ that
satisfy the following axioms.
	\begin{enumerate}[{\normalfont(WM.1)}]
		\item Continuity, i.e. for every $s\in S$ and $t\in T$ with $t\prec \Phi(s)$
		there exists $s'\in S$ such that $s'\prec s$ and $t\leq\Phi(s')$;
		\item $\Phi$ preserves the auxiliary relation $\prec$, i.e.
		$\Phi(a)\prec\Phi(b)$ in $T$ whenever $a\prec b$ in $S$.
	\end{enumerate}
\end{definition}

In \cite{apt2014}, a partially ordered monoid homomorphism that satisfies the
property (WM.1) alone is called a \emph{generalized $\cat{W}$-morphisms}.

Let $A$ be a local \Cs-algebra and equip its Cuntz semigroup $W(A)$ with the
relation $\prec$ given by
	\begin{equation}\label{eq:ar}
		[a]\prec[b]\,\,\text{ if and only if there exists }\,\,\eps> 0\ \text{ such
		that }\ [a]\leq[(b-\eps)_+].
	\end{equation}
It is easy to see that $\prec$ defines an auxiliary relation on the partially
ordered Abelian monoid $W(A)$, and that $(W(A),\prec)$ belongs to the category
$\cat W$ defined above (cf. \cite[Proposition 2.2.5]{apt2014}). Recall that the so-called {\it way-below} relation ($a\ll b$ if whenever $b\leq \sup y_n$, then there exists an $n\in\mathbb{N}$ such that $a\leq y_n$)
is the natural auxiliary relation considered in $W(A)$.

\begin{proposition}\label{prop:MorphismsW} Let $A$ and $B$ be local
\Cs-algebras. Every c.p.c. order zero map $\phi:A\to B$ naturally induces a
generalized $\cat{W}$-morphism $W(\phi):W(A)\to W(B)$. If $\phi$ is a
$*$-homomorphism, then $W(\phi)$ preserves the auxiliary relation and thus is a
$\cat{W}$-morphism.
\end{proposition}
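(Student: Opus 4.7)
The plan is to define $W(\phi)$ by the natural formula $W(\phi)([a]) := [\phi^{(\infty)}(a)]$, where $\phi^{(\infty)} = \phi\otimes\id_{M_\infty}$ is the $\infty$-ampliation (which is again a c.p.c.\ order zero map between the local \Cs-algebras $M_\infty(A)$ and $M_\infty(B)$), and then verify the generalized $\cat{W}$-morphism axioms in turn. Well-definedness, additivity, and order-preservation are routine: applying Corollary~\ref{cor:structure} to $\phi^{(\infty)}$ and arguing exactly as in \cite[Corollary 3.5]{wz2009}, one sees that if $a\precsim b$ in $M_\infty(A)$ then $\phi^{(\infty)}(a) \precsim \phi^{(\infty)}(b)$ in $M_\infty(B)$, because for any sequence $\seq x$ witnessing $a\precsim b$, the sequence $\pi_{\phi^{(\infty)}}(x_n)$ (suitably cut down by an approximate unit of $C^*(\phi^{(\infty)}(M_\infty(A)))$) witnesses $\phi^{(\infty)}(a)\precsim\phi^{(\infty)}(b)$.

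For continuity (WM.1), take $[a]\in W(A)$ and $[b]\in W(B)$ with $[b]\prec W(\phi)([a])$. By the definition of the auxiliary relation in (\ref{eq:ar}), there exists $\eps>0$ such that $[b] \leq [(\phi^{(\infty)}(a)-\eps)_+]$. Invoking Corollary~\ref{cor:eps} applied to $\phi^{(\infty)}$, we have $(\phi^{(\infty)}(a)-\eps)_+ \precsim \phi^{(\infty)}((a-\eps)_+)$, and so $[b] \leq W(\phi)([(a-\eps)_+])$. Setting $s' := [(a-\eps)_+]$, the pointwise inequality $(a-\eps)_+ \leq (a-\delta)_+$ for any $0<\delta<\eps$ gives $s' \prec [a]$, which is exactly what (WM.1) requires.

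For the second claim, assume that $\phi$ is a $*$-homomorphism; then $\phi^{(\infty)}$ is a $*$-homomorphism as well and commutes with continuous functional calculus, yielding the \emph{equality} $\phi^{(\infty)}((a-\eps)_+) = (\phi^{(\infty)}(a)-\eps)_+$. Given $[a']\prec[a]$, say $[a']\leq[(a-\eps)_+]$, we obtain $W(\phi)([a']) \leq [\phi^{(\infty)}((a-\eps)_+)] = [(\phi^{(\infty)}(a)-\eps)_+]$, whence $W(\phi)([a'])\prec W(\phi)([a])$, verifying (WM.2). The key conceptual point, and the mild obstacle to overcome, is that the only difference between the two halves of the statement is the direction of the inequality in Corollary~\ref{cor:eps}: for a general c.p.c.\ order zero map we only have $\phi^{(\infty)}((a-\eps)_+) \geq (\phi^{(\infty)}(a)-\eps)_+$ (in the Cuntz sense), which is enough to push $\eps$-cut-downs outside and hence establish continuity, but not enough to transfer the relation $\prec$ itself; the collapse to equality under the $*$-homomorphism assumption is precisely what upgrades $W(\phi)$ to a genuine $\cat W$-morphism.
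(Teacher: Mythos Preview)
Your proof is correct and follows essentially the same approach as the paper: both use Corollary~\ref{cor:eps} to verify (WM.1) by setting $s'=[(a-\eps)_+]$, and both rely on the fact that $*$-homomorphisms commute with functional calculus for (WM.2). Your write-up is in fact more complete than the paper's, which cites \cite{wz2009} for well-definedness and leaves the (WM.2) verification for $*$-homomorphisms entirely implicit; your closing paragraph identifying the inequality in Corollary~\ref{cor:eps} as the precise obstruction to (WM.2) in the general case is a nice touch not present in the original.
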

\begin{proof} It follows from \cite[Corollary 4.5]{wz2009} that $W(\phi)$ is a
well-defined morphism between semigroups (i.e. $W(\phi)$ preserves addition,
order and the zero element).

To check that $W(\phi)$ is continuous, let $t\in W(B)$ and $s\in W(A)$ be such
that $t\prec W(\phi)(s)$. We need to show the existence of $s'\in W(A)$ such
that $s'\prec s$ and $t\leq W(\phi(s'))$. To this end, let $[x]=s$. Since
$t\prec W(\phi)(s)=[\phi(x)]$, there exists $\e>0$ such that
$t\leq[(\phi(x)-\e)_+]$. Moreover, from Corollary \ref{cor:eps} we have that
$(\phi(x)-\e)_+\precsim\phi((x-\e)_+)$. Therefore, by setting
$s':=[(x-\e)_+]$, we have $s'\prec s$ in $W(A)$ and $t\leq W(\phi)(s')$.
\end{proof}

\begin{remark}
Observe that, if $\phi$ and $\psi$ are Cuntz equivalent c.p.c. order zero maps,
then the induced maps at the level of the Cuntz semigroups are the same. Indeed, the sequence that witnesses the Cuntz equivalence between $\phi$ and $\psi$ can be used to show that $W(\phi)(a)=W(\psi)(a)$ for all $a\in A$. 
\end{remark}
We now turn our attention to the question of whether the bivariant Cuntz
semigroup $W(A,B)$ belongs to the category $\cat W$ mentioned above. First of
all we introduce the following auxiliary relation on $W(A,B)$, as a
generalization of the above introduced auxiliary relation \eqref{eq:ar}.

\begin{definition} Let $A$ and $B$ be local \Cs-algebras, and let $\Phi,\Psi\in
W(A,B)$. Define the auxiliary relation $\prec$ on $W(A,B)$, in symbols
$\Phi\prec \Psi$, if there exists $\e>0$ such that $\Phi\leq [\psi_\e]$, where
$\psi$ is any representative of $\Psi$.
\end{definition}
It is left to the reader to check that the above definition indeed gives an
auxiliary relation.

\begin{lemma} Let $A$ and $B$ be local \Cs-algebras, and let $\Phi\in W(A,B)$.
For any representative $\phi\in\Phi$, the sequence $\seq\Phi$, given by $\Phi_n
:= [\phi_{\frac1n}]$, is increasing in $W(A,B)$ and is such that $\sup\Phi_n =
\Phi$.
\end{lemma}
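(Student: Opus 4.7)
The plan is to establish the three claims separately: monotonicity, that $\Phi$ is an upper bound, and that $\Phi$ is the least upper bound.

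First I would verify monotonicity. For $n\leq m$, one has $\tfrac1n\geq\tfrac1m$, so $f_{1/n}(x)\leq f_{1/m}(x)$ pointwise on $[0,1]$. Writing $\phi=h_\phi\pi_\phi$ as in Corollary \ref{cor:structure}, the functional calculus formula $\phi_\eps(a)=f_\eps(h_\phi)\pi_\phi(a)$ shows that both $\phi_{1/n}$ and $\phi_{1/m}$ admit representations with the \emph{same} support $*$-homomorphism $\pi_\phi$ and positive parts $f_{1/n}(h_\phi)\leq f_{1/m}(h_\phi)$. An application of Proposition \ref{prop:samesupp} then yields $\phi_{1/n}\precsim\phi_{1/m}$, i.e. $\Phi_n\leq\Phi_{n+1}$. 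That $\Phi_n\leq\Phi$ is just Corollary \ref{cor:cfcforcpc}, so $\Phi$ is an upper bound.

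For the supremum claim, suppose $\Psi=[\psi]\in W(A,B)$ satisfies $\Phi_n\leq\Psi$ for all $n$. I need to exhibit $\Phi\leq\Psi$, which by Proposition \ref{prop:subeq}(ii) amounts to showing that, for every finite $F\Subset A$ and every $\eps>0$, there exists $b\in M_\infty(B)$ with $\|b^*\psi(a)b-\phi(a)\|<\eps$ for all $a\in F$. The key ingredient is the uniform convergence $\phi_{1/n}(a)\to\phi(a)$ on any finite set $F$, which follows from the identity $\phi_{1/n}(a)-\phi(a) = (f_{1/n}(h_\phi)-h_\phi)\pi_\phi(a)$ together with $\|f_{1/n}(h_\phi)-h_\phi\|\to 0$ (a direct functional calculus estimate on the contraction $h_\phi$). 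Choose $n$ large so that $\|\phi_{1/n}(a)-\phi(a)\|<\eps/2$ for every $a\in F$. Since $\Phi_n\leq\Psi$, Proposition \ref{prop:subeq}(ii) furnishes $b\in M_\infty(B)$ with $\|b^*\psi(a)b-\phi_{1/n}(a)\|<\eps/2$ for all $a\in F$. The triangle inequality delivers the required estimate, hence $\Phi\leq\Psi$, so $\sup\Phi_n=\Phi$.

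The main subtlety I anticipate is the step that both $\phi_{1/n}$ and $\phi_{1/m}$ can be written with the common support $*$-homomorphism $\pi_\phi$ so that Proposition \ref{prop:samesupp} applies; this requires care because the structure theorem does not make $h$ and $\pi$ unique in general, but the functional calculus of Proposition (the c.p.c.\ order zero functional calculus over local \Cs-algebras) provides precisely the representation needed. Everything else reduces to routine estimates once the functional calculus is in place.
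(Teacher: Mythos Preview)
Your proof is correct and follows essentially the same route as the paper's. The only minor deviation is in the monotonicity step: the paper observes the identity $\phi_{\eps_1+\eps_2}=(\phi_{\eps_1})_{\eps_2}$ and then applies Corollary~\ref{cor:cfcforcpc} to $(\phi_{1/m})_{1/n-1/m}\precsim\phi_{1/m}$, whereas you compare $f_{1/n}(h_\phi)\leq f_{1/m}(h_\phi)$ directly and invoke Proposition~\ref{prop:samesupp}. Both arguments are valid and equally short; your concern about the common support $*$-homomorphism is handled because the functional calculus is \emph{defined} using the fixed pair $(h_\phi,\pi_\phi)$, so Proposition~\ref{prop:samesupp} applies with $C=C^*(\phi(A))$. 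The supremum argument---choosing $n$ so that $\|\phi_{1/n}(a)-\phi(a)\|<\eps/2$ on $F$ and then combining with the witness for $\Phi_n\leq\Psi$ via the triangle inequality---is exactly what the paper does.
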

\begin{proof} Given $\eps_1,\eps_2 > 0$, one has $\phi_{\eps_1+\eps_2} =
(\phi_{\eps_1})_{\eps_2}$. Hence, the sequence $\Phi_n$ is increasing by
Corollary \ref{cor:cfcforcpc}. Moreover, from the same corollary, we have that
$\Phi_n\leq\Phi$ for any $n\in\IN$, whence $\sup\Phi_n\leq\Phi$. Suppose
$\Psi\in W(A,B)$ is such that $\Phi_n\leq\Psi$ for any $n\in\IN$, and let $\psi$
be any representative of $\Psi$. From the local description of Cuntz comparison
of c.p.c. order zero maps (Proposition \ref{prop:subeq}), we have that for all $n\in\IN$, any finite subset $F$
of $A$ and given $\eps>0$, there exists
		$$ b_{n,F,\eps}\in M_\infty(B) \;\text{ such that
		}\;\norm{b_{n,F,\eps}^*\psi(a)b_{n,F,\eps}-\phi_{\frac1n}(a)}<\e,\;\text{
		for all } a\in A.$$
	Since the continuous functional calculus is norm-continuous, i.e.
		$$\norm{\phi_{\frac1n}(a)-\phi(a)}\to0,\qquad \forall a\in A,$$
	it follows that for each $a\in A$ and $\eps>0$, there exist $n_{a,\eps}\in\IN$ such that  $n>n_{a,\eps}$ implies
		$$\norm{\phi_{\frac1n}(a)-\phi(a)} < \e.$$
	Moreover, for any finite subset $F\Subset A$ and $\eps>0$, one can take
	$N_\eps := \max_{a\in F} \{n_{a,\eps}\}+1$, so that there exists
	$b_{N_\eps,F,\eps}\in M_\infty(B)$ with the property that
		$$\norm{b_{N_\eps,F,\eps}^*\psi(a)b_{N_\eps,F,\eps} -
		\phi_{\frac1{N_\eps}}(a)}<\eps,\text{for all } a\in F.$$
	Setting $b := b_{N_\eps,F,\eps}^*$, one has that
		\begin{align*}
			\norm{b^*\psi(a)b-\phi(a)} &= \norm{b^*\psi(a)b_n - \phi_{\frac1{N_\e}}(a)
			+ \phi_{\frac1{N_\e}}(a) - \phi(a)}\\
				&\leq\norm{b^*\psi(a)b_n - \phi_{\frac1{N_\e}}(a)} +
				\norm{\phi_{\frac1{N_\e}}(a) - \phi(a)}\\
				&< 2\eps
		\end{align*}
	for any $a\in F$. Therefore, $\Phi\leq\Psi$. By the arbitrariness of $\Psi$,
	we conclude that $\Phi = \sup\Phi_n$.
\end{proof}

\begin{proposition} Let $A$ and $B$ be local \Cs-algebras. The bivariant Cuntz
semigroup $W(A,B)$ is an object of the category $\cat{W}$.
\end{proposition}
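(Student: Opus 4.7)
The plan is to verify the four axioms (WO.1)–(WO.4) of Definition \ref{W} for $(W(A,B),\leq,\prec)$. The preceding lemma already supplies the principal tool: for every $\Phi \in W(A,B)$ with representative $\phi$, the sequence $\Phi_n := [\phi_{1/n}]$ is $\prec$-increasing and has supremum $\Phi$. What remains is to tie this sequence to the order ideal $\Phi^\prec$ and to check compatibility of $\prec$ with $+$.

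For (WO.1), I would prove that $\{\Phi_n\}$ is cofinal in $\Phi^\prec$. Given any $\Psi \prec \Phi$, by definition there exists $\epsilon > 0$ with $\Psi \leq [\phi_\epsilon]$. Choosing $n$ large enough that $1/n \leq \epsilon$, the two maps $\phi_{1/n}$ and $\phi_\epsilon$ share the support $*$-homomorphism $\pi_\phi$, and the functional-calculus inequality $f_\epsilon(h_\phi) \leq f_{1/n}(h_\phi)$ holds; hence Proposition \ref{prop:samesupp} yields $\phi_\epsilon \precsim \phi_{1/n}$, so $\Psi \leq \Phi_n$. Upward directedness of $\Phi^\prec$ follows from the same reasoning: given $\Psi_i \leq [\phi_{\epsilon_i}]$ for $i=1,2$, pick any $\epsilon \leq \min(\epsilon_1,\epsilon_2)$ and both $\Psi_i$ are dominated by the single element $[\phi_\epsilon] \in \Phi^\prec$. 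Axiom (WO.2) is then immediate, since the $\prec$-increasing cofinal sequence $\{\Phi_n\} \subset \Phi^\prec$ already satisfies $\sup \Phi_n = \Phi$ by the preceding lemma.

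The axioms (WO.3) and (WO.4) reduce to the observation that the cutoff commutes with the direct sum, namely the identity $(\phi \hoplus \psi)_\epsilon = \phi_\epsilon \hoplus \psi_\epsilon$. This follows from Corollary \ref{cor:structure}: applied to $\phi \hoplus \psi$ the structure theorem delivers the support element $h_\phi \oplus h_\psi$ and support $*$-homomorphism $\pi_\phi \hoplus \pi_\psi$, and $f_\epsilon$ then acts componentwise. With this identity in hand, (WO.3) is clear: if $\Phi' \leq [\phi_{\epsilon_1}]$ and $\Psi' \leq [\psi_{\epsilon_2}]$, take $\epsilon = \min(\epsilon_1,\epsilon_2)$; then $\Phi' + \Psi' \leq [\phi_\epsilon] + [\psi_\epsilon] = [(\phi \hoplus \psi)_\epsilon]$, so $\Phi' + \Psi' \prec \Phi + \Psi$. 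For (WO.4), any $\Theta \prec \Phi + \Psi$ satisfies $\Theta \leq [(\phi \hoplus \psi)_\epsilon] = [\phi_\epsilon] + [\psi_\epsilon]$ for some $\epsilon > 0$, exhibiting a dominator in $\Phi^\prec + \Psi^\prec$.

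I expect the main obstacle to lie in (WO.1), specifically in the cofinality assertion, since this is the only step that genuinely requires comparing two distinct cutoffs of the same map rather than merely invoking the preceding lemma; the rest is essentially bookkeeping once one notes that $f_\epsilon$ is compatible with $\hoplus$. No new hypotheses on $A$ or $B$ are needed beyond the hypothesis that they are local \Cs-algebras, which is what guarantees that the functional-calculus and structure-theorem results cited above remain valid in our setting.
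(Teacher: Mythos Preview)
Your proposal is correct and follows essentially the same route as the paper: verify (WO.1)--(WO.2) via the cofinal $\prec$-increasing sequence $\{[\phi_{1/n}]\}$ furnished by the preceding lemma, and deduce (WO.3)--(WO.4) from the identity $(\phi\hoplus\psi)_\eps = \phi_\eps\hoplus\psi_\eps$. The one minor point to tighten is that the preceding lemma only asserts the sequence is $\leq$-increasing; you still need the observation (which the paper states explicitly) that $[\phi_{\eps_2}]\prec[\phi_{\eps_1}]$ whenever $0<\eps_1<\eps_2$, which follows from $(\phi_{\eps_1})_{\eps_2-\eps_1}=\phi_{\eps_2}$.
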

\begin{proof} One has to verify that $W(A,B)$ has all the properties of
Definition \ref{W}. By the definition of the auxiliary relation $\prec$ on
$W(A,B)$, it follows that, for $0<\e_1<\e_2$, one has
$[\phi_{\e_2}]\prec[\phi_{\e_1}]\prec[\phi]$ for $[\phi]\in W(A, B)$. Therefore,
$\{[\phi_{\frac1n}]\}_{n\in\IN}$ is seen to be a cofinal
$\prec$-increasing sequence in $[\phi]^\prec\subset W(A,B)$, which shows that
$W(A,B)$ has properties (WO.1), and (WO.2) by the previous lemma. Property
(WO.3) follows from the fact that $(\phi\hoplus\psi)_\eps =
\phi_\eps\hoplus\psi_\eps$ for any c.p.c. order zero maps $\phi,\psi:A\to
M_\infty(B)$. The last property is a consequence of the fact that
$[\phi_\eps]_{\e>0}$ is a cofinal sequence in $[\phi]^\prec$ for any c.p.c. order zero map $\phi:A\to M_\infty(B)$, and that $(\phi\hoplus\psi)_\eps =
\phi_\eps\hoplus\psi_\eps$.
\end{proof}

We now collect the technical results needed to prove that, if $f:A\to A'$ and
$g:B\to B'$ are $*$-homomorphisms between local \Cs-algebras, then the induced
maps $W(f,B):W(A',B)\to W(A,B)$ and $W(A,g):W(A,B)\to W(A,B')$ are morphisms in
the category $\cat W$, in the following two lemmas.

\begin{lemma}\label{lem:morsecond} Let $A,B,B'$ be local \Cs-algebras, $g:B\to
B'$ be a $*$-homomorphism and $\phi:A\to B$ be a c.p.c. order zero map. Then
$(g\circ\phi)_\eps = g\circ\phi_\eps$ for any $\eps > 0$. \end{lemma}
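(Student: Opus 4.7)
The plan is to reduce the statement to the fact that $*$-homomorphisms commute with continuous functional calculus, using the structure theorem for c.p.c. order zero maps. Concretely, I would exploit the correspondence between c.p.c. order zero maps $\phi:A\to B$ and $*$-homomorphisms $\rho_\phi:C_0((0,1])\otimes A\to B$ given in \cite[Corollary 3.1]{wz2009}, which characterizes the functional calculus by the identity $f(\phi)(a)=\rho_\phi(f\otimes a)$ for all $f\in C_0((0,1])$ and $a\in A$.

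First I would observe that the composition $g\circ\phi:A\to B'$ is a c.p.c. order zero map (since $g$ is a $*$-homomorphism, hence c.p.c.\ and orthogonality preserving, and composition of c.p.c.\ order zero with a $*$-homomorphism is c.p.c.\ order zero). Next, by the universal property of $\rho_\phi$, the $*$-homomorphism associated to $g\circ\phi$ is precisely $\rho_{g\circ\phi}=g\circ\rho_\phi$, since both agree on elementary tensors $\mathrm{id}\otimes a\mapsto g(\phi(a))$ and $C_0((0,1])\otimes A$ is generated by such tensors as a $*$-algebra.

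Then the desired identity follows by a one-line computation: for any $a\in A$,
\begin{align*}
(g\circ\phi)_\eps(a) &= f_\eps(g\circ\phi)(a)\\
&=\rho_{g\circ\phi}(f_\eps\otimes a)\\
&=(g\circ\rho_\phi)(f_\eps\otimes a)\\
&=g\bigl(\rho_\phi(f_\eps\otimes a)\bigr)\\
&=g\bigl(\phi_\eps(a)\bigr)=(g\circ\phi_\eps)(a).
\end{align*}

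The only potentially delicate point is justifying $\rho_{g\circ\phi}=g\circ\rho_\phi$, but this is immediate from the uniqueness in \cite[Corollary 3.1]{wz2009} once one checks the elementary-tensor agreement. Equivalently, one could write $\phi=h_\phi\pi_\phi$ via Corollary \ref{cor:structure}, note that $g\circ\phi=g(h_\phi)(g\circ\pi_\phi)$ exhibits $g\circ\phi$ in its structure form (with $g(h_\phi)$ a positive contraction commuting with the $*$-homomorphism $g\circ\pi_\phi$), and then use that $f_\eps(g(h_\phi))=g(f_\eps(h_\phi))$ since $*$-homomorphisms on \Cs-algebras commute with continuous functional calculus on positive contractions. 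Either route is routine and no obstacle is expected.
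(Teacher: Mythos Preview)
Your proof is correct. The main route via the cone-algebra correspondence $\rho_\phi:C_0((0,1])\otimes A\to B$ is a genuinely different and somewhat cleaner argument than the paper's: the paper works directly with the decomposition $\phi=h_\phi\pi_\phi$ from Corollary~\ref{cor:structure}, extends $g$ to the bidual $g\dd:B\dd\to{B'}\dd$ (since $h_\phi$ and $\pi_\phi$ live in $\mathcal M(C^*(\phi(A)))\subset B\dd$ rather than in $B$), observes that $g\circ\phi=g\dd(h_\phi)(g\dd\circ\pi_\phi)$ is again a structure decomposition, and concludes from the definition of functional calculus. Your cone-algebra approach sidesteps the bidual entirely because $\rho_\phi$ lands in $B$ itself and the identity $\rho_{g\circ\phi}=g\circ\rho_\phi$ is immediate from uniqueness. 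The alternative you sketch at the end is essentially the paper's argument, with one caveat: writing ``$g(h_\phi)$'' without further comment is precisely the point where the paper is more careful, since $h_\phi$ need not lie in $B$, which is why the bidual extension $g\dd$ is invoked there.
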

\begin{proof} Let $h_\phi$ and $\pi_\phi$ be such that $\phi = h_\phi\pi_\phi$
as described by Corollary \ref{cor:structure}. Let $g\dd:B\dd\to{B'}\dd$ be the
bidual map of $g$. Then, the decomposition
		$$g\circ\phi = g\dd(h_\phi)(g\dd\circ\pi_\phi).$$
	agrees with the decomposition described in Corollary \ref{cor:structure}.
	Therefore, the result follows from the definition of functional calculus on
	c.p.c. order zero maps between local \Cs-algebras.
\end{proof}

\begin{lemma}\label{lem:morfirst} Let $A,A',B$ be local \Cs-algebras, $f:A\to
A'$ be a $*$-homomorphism and $\phi:A'\to B$ be a c.p.c. order zero map. Then
$\phi_\eps\circ f = (\phi\circ f)_\eps$ for any $\eps>0$.
\end{lemma}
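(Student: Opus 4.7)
The strategy mirrors that of Lemma \ref{lem:morsecond}, but with the $*$-homomorphism $f$ acting on the inside of $\phi$ rather than on the outside. The naive attempt, writing $\phi\circ f = h_\phi(\pi_\phi\circ f)$ and applying functional calculus, is unsatisfactory because this is generally not the canonical decomposition for $\phi\circ f$ in the sense of Corollary \ref{cor:structure}: the element $h_\phi$ lives in $\mathcal M(C^*(\phi(A')))$ rather than in $\mathcal M(C^*((\phi\circ f)(A)))$. To sidestep this, I use the intrinsic description of functional calculus on c.p.c.\ order zero maps made explicit in the proof of the preceding functional-calculus proposition.

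Specifically, for every polynomial $p$ of degree $d$ with zero constant term, there is a polynomial $P_p$ in $m_d+1$ variables such that
$$p(\phi)(a) \;=\; P_p\bigl(\phi(a),\phi(a^{1/2}),\ldots,\phi(a^{1/2^{m_d}})\bigr) \qquad\forall\, a\in A'^+,$$
and crucially the right-hand side involves only $\phi$ and $a$, not the auxiliary data $h_\phi,\pi_\phi$. Since $f$ is a $*$-homomorphism, it commutes with continuous functional calculus on positive elements; in particular, $f(a^{1/2^k})=f(a)^{1/2^k}$ for every $a\in A^+$ and $k\in\IN$. Substituting into the intrinsic formula applied to $\phi\circ f$ yields
$$p(\phi\circ f)(a) \;=\; P_p\bigl(\phi(f(a)),\phi(f(a)^{1/2}),\ldots,\phi(f(a)^{1/2^{m_d}})\bigr) \;=\; p(\phi)(f(a)),$$
so $p(\phi\circ f)=p(\phi)\circ f$ for every polynomial $p$ with zero constant term (by linearity the identity extends from $A^+$ to all of $A$).

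To promote this to $f_\eps\in C_0((0,1])$, I approximate $f_\eps$ uniformly on $[0,1]$ by polynomials $\{p_n\}$ with zero constant term. By the very construction of the functional calculus on c.p.c.\ order zero maps given in the preceding proposition (which proceeds through precisely such approximations), $p_n(\phi\circ f)(a)\to f_\eps(\phi\circ f)(a)$ and $p_n(\phi)(f(a))\to f_\eps(\phi)(f(a))$ in norm for every $a$, and the polynomial identity passes to the limit:
$$f_\eps(\phi\circ f)(a) \;=\; f_\eps(\phi)(f(a)) \;=\; (\phi_\eps\circ f)(a),$$
which is exactly the required equality $(\phi\circ f)_\eps=\phi_\eps\circ f$. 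The only mildly delicate point is checking that uniform approximation of $f_\eps$ by polynomials is compatible with the functional calculus on both $\phi$ and $\phi\circ f$; this is immediate because both maps are contractive, so $\norm{p_n(\phi)-f_\eps(\phi)}$ and $\norm{p_n(\phi\circ f)-f_\eps(\phi\circ f)}$ are controlled uniformly by $\sup_{x\in[0,1]}|p_n(x)-f_\eps(x)|$.
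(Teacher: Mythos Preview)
Your argument is correct and takes a genuinely different route from the paper's. The paper works directly with the two canonical decompositions: writing $\phi\circ f = h_{\phi\circ f}\pi_{\phi\circ f}$ and also $\phi\circ f = h_\phi(\pi_\phi\circ f)$, it constructs the projection $p = \SOT\lim_\lambda \pi_\phi(f(u_\lambda))$ in the bidual, shows $h_{\phi\circ f}=ph_\phi$ and $\pi_{\phi\circ f}=\pi_\phi\circ f$, and then uses that $g(ph_\phi)=pg(h_\phi)$ for $g\in C_0((0,1])$ to conclude $(\phi\circ f)_\eps=(h_\phi)_\eps(\pi_\phi\circ f)=\phi_\eps\circ f$.

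By contrast, you bypass the comparison of decompositions entirely by exploiting the intrinsic polynomial formula $p(\phi)(a)=P_p(\phi(a),\phi(a^{1/2}),\ldots)$ established in the preceding proposition, together with the fact that the $*$-homomorphism $f$ commutes with taking roots. This is more elementary in that it never touches the bidual or the auxiliary projection $p$; the price is that you must invoke the polynomial identity and then pass through an approximation argument, whereas the paper's structural computation handles all $g\in C_0((0,1])$ in one stroke. Both approaches ultimately rest on the structure theorem, but yours uses it only implicitly through the already-proved intrinsic formula and the norm-continuity of the functional calculus in the function variable.
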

\begin{proof} Observe that, by applying Corollary \ref{cor:structure} at
different stages, the c.p.c. order zero map $\phi\circ f$ can be expressed in
the following equivalent form
		$$\phi\circ f = h_{\phi\circ f}\pi_{\phi\circ f} = h_\phi(\pi_\phi\circ f).$$
	Set $C_\phi := C^*(\phi(A'))$, and let $\{u_\lambda\}_{\lambda\in\Lambda}\subset
	A$ be an increasing approximate unit for $A$. Define the projection $p\in
	C_\phi\dd$ by the strong limit
		$$p := \SOT\lim_\lambda \pi_\phi(f(u_\lambda)),$$
	which commutes with $h_\phi$ and $\pi_\phi\circ f$ by definition. Moreover, $p\pi_\phi(f(a)) = \pi_\phi(f(a))$ for any $a\in A$. A direct computation
	shows that $h_{\phi\circ f} = p h_\phi$, and therefore $\pi_{\phi\circ f} =
	p(\pi_\phi\circ f)=\pi_\phi\circ f$. Since $p$ is a projection,
	$g(h_{\phi\circ f}) = pg(h_\phi)$ for any $g\in C_0((0,1])$. Hence,
	\begin{align*}
		(\phi\circ f)_\eps &= (h_{\phi\circ f}\pi_{\phi\circ f})_\eps\\
			&= (h_{\phi\circ f})_\e(\pi_\phi\circ f)\\
			&= (ph_\phi)_\e(\pi_\phi\circ f)\\
			&= (h_\phi)_\e(\pi_\phi\circ f)\\
			&= \phi_\eps\circ f,
	\end{align*}
	for any $\eps>0$.
\end{proof}

\begin{theorem} The $W(\_,\_)$ is a bifunctor from the category of local \Cs-algebras
to the category $\cat{W}$, contravariant in the first argument and covariant in
the second.
\end{theorem}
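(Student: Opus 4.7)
The plan is to assemble the ingredients already accumulated in the preceding subsections rather than to prove anything substantially new. The functoriality subsection established that every $*$-homomorphism $f:A\to A'$ (resp. $g:B\to B'$) between local \Cs-algebras induces a semigroup homomorphism $W(f,B):W(A',B)\to W(A,B)$ (resp. $W(A,g):W(A,B)\to W(A,B')$) which preserves the natural order and the zero element; and the preceding proposition ensures that each $W(A,B)$ equipped with the auxiliary relation $\prec$ is an object of $\cat{W}$. Hence what remains is to verify axioms (WM.1) and (WM.2) for the induced maps, plus the (routine) compatibility with compositions and identities.

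For (WM.2), suppose $[\phi]\prec[\psi]$ in $W(A',B)$, so there exists $\eps>0$ with $[\phi]\leq[\psi_\eps]$. Applying the order-preserving homomorphism $W(f,B)$ and invoking Lemma \ref{lem:morfirst} to rewrite $\psi_\eps\circ f=(\psi\circ f)_\eps$, we obtain
\[
W(f,B)([\phi])=[\phi\circ f]\leq[\psi_\eps\circ f]=[(\psi\circ f)_\eps],
\]
that is, $W(f,B)([\phi])\prec W(f,B)([\psi])$. The argument for $W(A,g)$ is identical after replacing Lemma \ref{lem:morfirst} with Lemma \ref{lem:morsecond}.

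For (WM.1) applied to $W(f,B)$, take $\Phi=[\phi]\in W(A',B)$ and $\Psi\in W(A,B)$ with $\Psi\prec W(f,B)(\Phi)$, so there is $\eps>0$ with $\Psi\leq[(\phi\circ f)_\eps]$. By Lemma \ref{lem:morfirst}, $(\phi\circ f)_\eps=\phi_\eps\circ f$, so $\Psi\leq W(f,B)([\phi_\eps])$; and $[\phi_\eps]\prec[\phi]=\Phi$ by the definition of $\prec$, providing the required intermediate element. Continuity of $W(A,g)$ is handled in the same way via Lemma \ref{lem:morsecond}. Functoriality of the assignments $f\mapsto W(f,B)$ and $g\mapsto W(A,g)$ on compositions and identities is immediate from $\psi\circ(f\circ f')=(\psi\circ f)\circ f'$ and $(g\circ g')\circ\phi=g\circ(g'\circ\phi)$ at the level of representatives, and independence of $W(f,B)$ and $W(A,g)$ acts in the two variables separately (so the bifunctor structure follows).

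No serious obstacle is expected: the only delicate point is the interaction between the continuous functional calculus operation $(\_)_\eps$ and pre-/post-composition by a $*$-homomorphism, and precisely this was handled by Lemmas \ref{lem:morsecond} and \ref{lem:morfirst}. With those identities in hand, (WM.1) and (WM.2) fall out of the definition of $\prec$ almost mechanically.
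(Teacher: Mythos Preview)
Your proposal is correct and follows essentially the same approach as the paper: reduce to verifying (WM.1) and (WM.2) for the induced maps $W(f,B)$ and $W(A,g)$, and derive both from the identities $(\phi\circ f)_\eps=\phi_\eps\circ f$ and $(g^{(\infty)}\circ\phi)_\eps=g^{(\infty)}\circ\phi_\eps$ provided by Lemmas \ref{lem:morfirst} and \ref{lem:morsecond}. The paper merely spells out the case of $W(A,g)$ first and then says the argument for $W(f,B)$ is symmetric, whereas you spell out $W(f,B)$ first; the content is the same.
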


\begin{proof} It has already been shown that $W(A,B)$ is in the category $\cat
W$ for any choice of local \Cs-algebras $A$ and $B$. It is left to check that
any $*$-homomorphisms $f:A\to A'$ and $g:B\to B'$ between local \Cs-algebras
induce maps $W(f,B)$ and $W(A,g)$ respectively which are morphisms in $\cat W$.

The continuity of $W(A,g)$ follows from the fact that if $[\psi]\prec
W(A,g)([\phi])$, then there exists $\eps>0$ such that
$\psi\precsim(g^{(\infty)}\circ\phi)_\eps$, which by Lemma \ref{lem:morsecond}
coincides with $\psi\precsim g^{(\infty)}\circ\phi_\eps$. Therefore,
it is enough to take $\phi_\eps$ to witness the continuity, since
$[\phi_\eps]\prec[\phi]$ and $[\psi]\leq W(A,g)([\phi_\eps])$. Let us show now {\rm (WM.2)}. Let
$[\phi]\prec[\psi]$ in $W(A,B)$, i.e. there exists $\quad\eps >0 \quad \text{ such that } \quad \phi\precsim\psi_\eps)$
	Since $W(A,g)$ is order preserving, we must have $W(A,g)([\phi])\leq
	W(A,g)([\psi_\eps])$, whereas, by Lemma \ref{lem:morsecond} we conclude
	that the right-hand side coincides with $[(g^{(\infty)}\circ\psi)_\eps]$,
	whence $W(A,g)([\phi])\prec W(A,g)([\psi])$.
	
	Similarly, using Lemma \ref{lem:morfirst} in place of Lemma \ref{lem:morsecond}, the same
	argument shows that $W(f,B)$ satisfies both properties (WM.1) and (WM.2) as well.
\end{proof}


\subsection{Continuity} As shown in \cite{apt2014}, the category $\cat W$ has
inductive limits and, moreover, the functor $W(\_)$, when defined on the
category of local \Cs-algebras to category $\cat W$, becomes continuous under
arbitrary limits. Therefore, the bivariant functor $W(\_,\_)$ is also continuous
in the second variable trivially whenever the first argument is an elementary
\Cs-algebra. However, in more general cases this property fails, as shown by the
following (counter)examples.

\begin{example} Let $A$ be the \emph{algebraic} CAR algebra, that is the
algebraic direct limit of the inductive sequence
	$$\CC \xrightarrow{\phi_0} M_2
	\xrightarrow{\phi_1} M_4 \xrightarrow{\phi_2}
	\cdots,$$
where the generic connecting map $\phi_n : M_{2^n}\to M_{2^{n+1}}$ is given by
	$$\phi_n(a) = a\oplus a,\qquad\forall a\in M_{2^n}.$$
An element $[\phi]\in W(A,M_k)$ is represented by a c.p.c. order zero map $\phi:A\to M_k(K)\cong K$ with the property that $\phi(1_A)\in K$ commutes with the support $*$-homomorphism $\pi:A\to B(\ell^2(\IN))$ of $\phi$. By functional calculus on $\phi(1)$ one can then find a finite rank projection that commutes with $\pi$. Since there are no $*$-homomorphisms from the CAR algebra to matrix algebras, apart from the trivial one, one sees that $W(A, M_{2^n}) = \{0\}$. However, one verifies
that $W(A,A) = W(A)$ (cf. Section \ref{ssec:ssa}), which is equal to $ \IN_0[\frac12]\sqcup (0,\infty)$ \cite{bpt08}. Continuity
in this case is recovered if one takes the completion $\tilde A$ of $A$, since
simplicity now implies $W(\tilde A, A) = \{0\}$.
\end{example}

A similar computation shows that, in general, the functor is not continuous in
the first argument as well.

\begin{example} Let $A$ be the CAR algebra. Then $W(M_{2^n},\CC) \cong \IN_0$
for any $n$, and the connecting maps are just multiplication by 2 at each step.
Therefore, $\displaystyle\lim_{\longleftarrow} W(M_{2^n},\CC) = \{0\}$, which
coincides with $W(A,\CC) = \{0\}$. But $W(M_{2^n}, K) = \IN_0\cup\{\infty\}$.
Hence, $\displaystyle\lim_{\longleftarrow} W(M_{2^n},\CC) = \{0,\infty\} \neq
W(A,K) = \{0\}$.
\end{example}

\begin{example} Let $A$ be the CAR algebra. Then $W(M_{2^n}, A) \cong W(A) \cong
\IN_0[\frac12]\sqcup (0,\infty)$, with the connecting maps that are now automorphisms of
$\IN_0[\frac12]\sqcup (0,\infty)$. Hence, we have that $\displaystyle\lim_{\longleftarrow}
W(M_{2^n}, A) \cong \IN_0[\tfrac12]\sqcup (0,\infty)$, which can be identified with $W(A, A)\cong W(A)$ (see Section \ref{sec:Examples} for further details).
\end{example}


\subsection{Compact Elements}

In the ordinary theory of the Cuntz semigroup $W(A)$ there is a notion of compact
element: an element $s\in W(A)$ of the Cuntz semigroup of the \Cs-algebra $A$
is compact if $s\ll s$, where $\ll$ denotes the so-called \emph{way-below}
relation we mentioned earlier. It is immediate to check that, according to this definition, every
projection defines a compact element in the Cuntz semigroup. As the natural
bivariant extension of projections are $*$-homomorphisms, we look at a
definition of compact elements for the bivariant Cuntz semigroup such that the
class of every $*$-homomorphism between local \Cs-algebras $A$ and $B$ turns out
to be compact in $WW(A,B)$.

Let $A$ and $B$ be \Cs-algebras. Then, Proposition \ref{prop:MorphismsW} shows that $W(\phi)$ preserves the
\emph{way-below} relation $\ll$ (equivalently the compact containment relation
$\cc$) whenever $\phi$ is a $*$-homomorphism. Hence, considering the stable version of Proposition \ref{prop:MorphismsW}, one gets that $WW(\phi)$ is in the category $\Cu$ (cf. \cite{cei2008}). These
considerations, together with the fact that $*$-homomorphisms over $\CC$
correspond to projections in the target algebra, lead to the following.

\begin{definition} Let $A$ and $B$ be \Cs-algebras. An element $\Phi\in WW(A,B)$
is called compact if there exists a c.p.c. order zero map $\phi$ representing $\Phi$, i.e. $[\phi]=\Phi$,  such that $\Cu(\phi)$ is a morphism in the category $\Cu$.
\end{definition}

It is easy to see that one recovers the usual
definition for compact elements in the ordinary Cuntz semigroup $\Cu(\_)$, when the first algebra is $\CC$. Indeed, consider a \Cs-algebra $B$ and a c.p.c. order zero map $\phi:\CC\to
B\otimes K$. From the structure theorem \sth, one has $\phi(z)=zb$ for any
$z\in\CC$, with $b:=\phi(1)\in B\otimes K$. The induced map
$\Cu(\phi):\IN_0\cup\{\infty\}\to \Cu(B)$ sends $n$ to $n[b]$, with
$[b]\in\Cu(B)$. Since $n=1$ arises from any minimal \emph{projection} in $K$,
one has $1\ll1$ inside $\Cu(\CC)$. Moreover, $\Cu(\phi)$, being the induced map of a
compact element in $WW(\CC,B)$ by hypothesis, preserves the way-below relation; thus, $\Cu(\phi)(1)\ll \Cu(\phi)(1)$, i.e. $[b]\ll[b]$ in $\Cu(B)$.

Other examples of compact elements in the bivariant Cuntz semigroup are given by
the classes of c.p.c. order zero maps that have a $*$-homomorphism as a
representative. This follows from the fact that Cuntz-equivalent c.p.c. order
zero maps induce the same morphism at the level of the Cuntz semigroups and that
$*$-homomorphisms preserve the relation $\ll$. The following is an easy example where the above happens.

\begin{example} Let $A$ be the unital \Cs-algebra $C([0,1])$. Take $\pi:A\to A$ to be the identity map, and $h\in A_+$ the continuous map which takes value $h(0)=1/2$ and $h(1)=1$, and linear in between. The map $\phi(\_):=h\pi(\_)$ defines a c.p.c. order zero map from $A$ to $A$, which is not a $*$-homomorphism. However, since $h$ is an invertible element in $A$, $\phi$ is Cuntz-equivalent to $\pi$, as $\phi(a) = h^{\frac12}\pi(a)h^{\frac12}$ and $h^{-\frac12}\phi(a)h^{-\frac12} = \pi(a)$ for any $a\in A$.
\end{example}

With the above considerations in mind, we introduce the following notation
	$$V(A,B) := \{\Phi\in W(A,B)\ |\ \Phi=[\pi]\text{ for some $*$-homomorphism $\pi$}\}.$$
In particular, thanks to \cite[Lemma 2.20]{apt2009}, one sees that $V(\CC,B)\cong V(B)$, i.e. the Murray-von Neumann semigroup of $B$, whenever $B$ is a stably finite local \Cs-algebra. In analogy with the contents of Section 2.4.2 of \cite{apt2009}, we denote by $W(A,B)_+$ all the other elements of $W(A,B)$ that do not belong to $V(A,B)$. Hence, any c.p.c. order zero map $\phi:A\to \Mi(B)$ such that $[\phi]\in W(A,B)_+$ will be called \emph{purely c.p.c. order zero}. We then have a decomposition of the bivariant Cuntz semigroup as the disjoint union
	$$W(A,B) = V(A,B)\sqcup W(A,B)_+$$
for any pair of local \Cs-algebras $A$ and $B$. Furthermore, $W(A,B)_+$ is \emph{absorbing}, in the sense that $\Phi+\Psi\in W(A,B)_+$ whenever $\Phi\in V(A,B)$ and $\Psi\in W(A,B)_+$.

The next result shows that $V(A,B)$ corresponds, under special circumstances, to the set of compact elements in $WW(A,B)$.

\begin{theorem} Let $A$ and $B$ be \Cs-algebras, with $A$ unital and $B$ stably finite. Then $V(A,B)$ is the subsemigroup of all the compact elements of $WW(A,B)$.
\end{theorem}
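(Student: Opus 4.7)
The plan is to establish the two inclusions separately, with the harder direction relying on the stable finiteness of $B$ to convert abstract compactness in $\Cu$ into a concrete spectral gap.

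The inclusion $V(A,B)\subseteq\{\text{compact elements of }WW(A,B)\}$ follows immediately from the stable version of Proposition \ref{prop:MorphismsW}: a $*$-homomorphism $\pi$ induces a map $\Cu(\pi)$ that preserves the way-below relation and is therefore a $\Cu$-morphism, so $[\pi]$ is compact by definition.

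For the reverse inclusion, let $\Phi\in WW(A,B)$ be compact with representative $\phi:A\otimes K\to B\otimes K$ such that $\Cu(\phi)$ is a $\Cu$-morphism. By Remark \ref{rem:Repre} one may assume $\phi=\phi_0\otimes\id_K$ for some c.p.c.\ order zero $\phi_0:A\to B\otimes K$. Unitality of $A$ makes $[1_A]$ compact in $\Cu(A)$, so that $[h]$, with $h:=\phi_0(1_A)$, is compact in $\Cu(B\otimes K)$. The crucial step---and main technical obstacle---is to exploit the stable finiteness of $B$ to conclude that $0$ is isolated in $\sigma(h)$. This passage from the order-theoretic compactness in $\Cu$ to a spectral statement relies on the correspondence between compact elements of $\Cu$ and Murray-von Neumann classes of projections in stably finite algebras; in full detail it may first require replacing $h$ by the Cuntz-equivalent element $(h-\epsilon)_+$ for sufficiently small $\epsilon$, whose spectrum automatically has a gap at $0$.

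Once $0$ is isolated in $\sigma(h)$, the spectral projection $q:=\chi_{[\delta,1]}(h)\in C^*(h)\subseteq B\otimes K$ is available for some $\delta>0$. Writing $\phi_0=h\pi_\phi$ via the structure theorem (Corollary \ref{cor:structure}) with $\pi_\phi(A)\subseteq\{h\}'$, the projection $q$ commutes with $\pi_\phi(A)$ (being a function of $h$), and so $\pi':=q\pi_\phi:A\to B\otimes K$ is a $*$-homomorphism with image inside $C^*(\phi_0(A))\subseteq B\otimes K$. The equivalence $\phi_0\sim\pi'$ is witnessed concretely by elements of $C^*(h)$: taking $b:=h^{1/2}$ yields $b^*\pi'(a)b=qh\pi_\phi(a)=\phi_0(a)$ (since $qh=h$), and taking $b:=g(h)^{1/2}$, where $g\in C_0((0,1])$ equals $t^{-1}$ on $[\delta,1]$ and vanishes at $0$, yields $b^*\phi_0(a)b=g(h)h\pi_\phi(a)=q\pi_\phi(a)=\pi'(a)$. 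Tensoring with $\id_K$ and invoking Lemma \ref{lem:tensorcse} then shows $\Phi=[\pi'\otimes\id_K]\in V(A,B)$.
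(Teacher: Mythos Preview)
Your approach is essentially the same as the paper's: both reduce the hard direction to showing that $h:=\phi_0(1_A)$ has $0$ isolated in its spectrum (equivalently, that the support projection $p_h$ lies in $B\otimes K$), and then conclude that $\phi_0$ is Cuntz-equivalent to its support $*$-homomorphism. The paper cites \cite[Theorem~3.5]{bc} explicitly for this step, obtaining $p_h\in B\otimes K$ directly, and then uses the sequence $x_n=(h+\tfrac1n)^{-1}h^{1/2}$; your explicit functional-calculus witnesses $h^{1/2}$ and $g(h)^{1/2}$ accomplish the same thing and are in fact a bit cleaner.

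There is, however, one genuine error in your write-up. Your fallback suggestion---``replacing $h$ by the Cuntz-equivalent element $(h-\epsilon)_+$ \dots\ whose spectrum automatically has a gap at $0$''---is false: if $\sigma(h)=[0,1]$ then $\sigma((h-\epsilon)_+)=[0,1-\epsilon]$, with no gap. The passage from ``$[h]\ll[h]$ in $\Cu(B)$ with $B$ stably finite'' to ``$0$ is isolated in $\sigma(h)$'' is \emph{not} a formal manipulation with cut-downs; it is precisely the content of the Brown--Ciuperca result the paper invokes, and you should cite it rather than offer this incorrect alternative. Once you replace that parenthetical with an explicit appeal to \cite[Theorem~3.5]{bc}, your argument is complete and matches the paper's.
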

\begin{proof} Clearly every element in $V(A,B)$ is compact in $WW(A,B)$., so let us prove the converse. Let $\Phi\in WW(A,B)$ be a compact element. Then, there exists a representative of the form $\phi\otimes\id_K\in\Phi$, with $\phi:A\to B\otimes K$, such that $\Cu(\phi)$ preserves the way-below relation (cf. Remark \ref{rem:Repre}). Since $1_A\otimes e\in A\otimes K$ is a projection, $e$ being a minimal projection in $K$, its class in $\Cu(A)$ is a compact element. Therefore, $\Cu(\phi)([1_A\otimes e])\ll\Cu(\phi)([1_A\otimes e])$ implying that $[\phi(1_A)]\ll[\phi(1_A)]$ in $\Cu(B)$. Since $B$ is stably finite, the positive element $h:=\phi(1_A)$ is then Cuntz-equivalent to a projection $p$ by \cite[Theorem 3.5]{bc}, e.g. its support projection $p=p_h$. Hence, $h^{\frac12}p_h = h^{\frac12}$, and there exists $\seq x\subset B$ such that $x_nh^{\frac12}\to p$ in norm, viz.
	$$x_n:=(h+\tfrac1n)^{-1}h^{\frac12}.$$
Thus, since $h^{\frac12}\pi_\phi(a)h^{\frac12}=\phi(a)$, one has $\norm{x_n^*\phi(a)x_n-\pi_\phi(a)}\to 0$ for any $a\in A$, which shows that $\phi$ is Cuntz-equivalent to its support $*$-homomorphism.
\end{proof}

The above theorem can be regarded as the bivariant version of the analogous result for the Cuntz semigroup (\cite[Theorem 3.5]{bc}).


\section{The Composition Product}

We start this section by recalling some further facts about c.p.c. order zero
maps that are used later on to introduce a composition product on the bivariant
Cuntz semigroup, which resembles the Kasparov's composition product in KK-Theory.
This product is used in the following sections to define a notion of $WW$-equivalence between C*-algebras, which will be crucial to establish our main classification result, i.e. Theorem \ref{thm:classification}. We define this new notion of equivalence in a spirit similar to  KK-equivalence in KK-theory.

Let $A$, $B$, $C$ be local \Cs-algebras, and let $\phi: A\to M_\infty(B)$ and
$\psi:B\to M_\infty(C)$ be any c.p.c. order zero maps. With $\psi^{(\infty)}$ denoting $\phi\otimes\id_{M_\infty}$, the
composition
	$$\phi\cdot\psi := \psi^{(\infty)}\circ\phi$$
defines a c.p.c. order
zero map from $A$ to $M_\infty(C)$. One defines a composition product among
elements of \emph{composable} bivariant Cuntz semigroups by just pushing the
above composition product forward to the corresponding classes.

\begin{proposition}[Composition product] Let $A$, $B$ and $C$ be separable local
\Cs-algebras. The binary map $W(A,B)\times W(B,C)\to W(A,C)$ given by
		$$[\phi]\cdot[\psi] := [\phi\cdot\psi]$$
	is well-defined. We call this map the \emph{composition product} for the
	bivariant Cuntz semigroup.
\end{proposition}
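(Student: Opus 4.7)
The plan is to verify two things: first, that $\phi\cdot\psi = \psi^{(\infty)}\circ\phi$ really defines a c.p.c.\ order zero map from $A$ to $M_\infty(C)$; and second, that the assignment $(\phi,\psi)\mapsto[\phi\cdot\psi]$ factors through the equivalence relations, i.e.\ depends only on $[\phi]$ and $[\psi]$.

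For the first point, $\psi^{(\infty)}=\psi\otimes\id_{M_\infty}$ is c.p.c.\ order zero as an ampliation of the c.p.c.\ order zero map $\psi$. Composition of completely positive contractions is completely positive and contractive, so only the order zero property for $\psi^{(\infty)}\circ\phi$ needs verification. Given $a,b\in A^+$ with $ab=0$, the order zero property of $\phi$ gives $\phi(a)\phi(b)=0$ in $M_\infty(B)^+$, and then the order zero property of $\psi^{(\infty)}$ yields $\psi^{(\infty)}(\phi(a))\psi^{(\infty)}(\phi(b))=0$. Composing with the canonical stability identification $M_\infty(M_\infty(C))\cong M_\infty(C)$ presents $\phi\cdot\psi$ as a c.p.c.\ order zero map into $M_\infty(C)$.

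For the second point, suppose $\phi\sim\phi'$ in $W(A,B)$ and $\psi\sim\psi'$ in $W(B,C)$. The key preliminary step is that Cuntz equivalence is preserved under ampliation: Lemma \ref{lem:tensorcse} applied with $\eta=\id_{M_\infty}$ (together with the flip automorphism of $M_\infty\otimes M_\infty$) shows that $\psi\sim\psi'$ implies $\psi^{(\infty)}\sim{\psi'}^{(\infty)}$. Now I apply Proposition \ref{prop:compositions} twice: the relation $\phi\sim\phi'$ together with the fixed map $\psi^{(\infty)}$ on the left gives $\psi^{(\infty)}\circ\phi\sim\psi^{(\infty)}\circ\phi'$; and $\psi^{(\infty)}\sim{\psi'}^{(\infty)}$ with the fixed map $\phi'$ on the right gives $\psi^{(\infty)}\circ\phi'\sim{\psi'}^{(\infty)}\circ\phi'$. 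Transitivity of $\sim$ then yields $\phi\cdot\psi\sim\phi'\cdot\psi'$, so $[\phi\cdot\psi]=[\phi'\cdot\psi']$ in $W(A,C)$.

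There is no serious obstacle, as all of the heavy lifting has been done in the preceding results. The two points demanding mild care are the implicit identification $M_\infty(M_\infty(C))\cong M_\infty(C)$ (invoked via Proposition \ref{prop:stability}) that allows one to view $\phi\cdot\psi$ as an element of $W(A,C)$, and the appeal to Lemma \ref{lem:tensorcse} to transport Cuntz equivalence through the ampliation $\psi\mapsto\psi^{(\infty)}$.
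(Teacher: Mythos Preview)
Your proof is correct and follows essentially the same route as the paper: reduce well-definedness to Proposition~\ref{prop:compositions} after showing that Cuntz comparison is preserved under the $\infty$-ampliation. The only cosmetic difference is that the paper cites Corollary~\ref{cor:stabilitycse} directly for $\psi\precsim\psi'\Rightarrow\psi^{(\infty)}\precsim{\psi'}^{(\infty)}$, whereas you invoke Lemma~\ref{lem:tensorcse} with $\eta=\id_{M_\infty}$ and a flip; since the corollary is just this lemma specialized, the two arguments are the same in substance.
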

\begin{proof} Let $\phi,\phi':A\to M_\infty(B)$, $\psi,\psi':B\to M_\infty(C)$
be c.p.c. order zero maps such that $\phi\precsim\phi'$ and $\psi\precsim\psi'$.
Since the latter condition implies $\psi^{(\infty)}\precsim{\psi'}^{(\infty)}$ by Corollary \ref{cor:stabilitycse},
it follows from Proposition \ref{prop:compositions} that
$\phi\cdot\psi\precsim\phi\cdot\psi'$ and $\phi\cdot\psi\precsim\phi'\cdot\psi$.
\end{proof}

\begin{corollary} The composition product on the bivariant Cuntz semigroup and
its order structure are compatible, in the sense that, if
$\phi,\phi',\psi,\psi'$ are as in the above proposition, then
$\phi\cdot\psi\precsim\phi'\cdot\psi'$.
\end{corollary}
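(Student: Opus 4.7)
The plan is to deduce this corollary almost immediately from the preceding proposition by invoking the transitivity of the Cuntz subequivalence relation $\precsim$ on c.p.c.\ order zero maps. The proposition provides two separate monotonicity statements for the composition product: fixing one slot and varying the other. The corollary asks for simultaneous monotonicity in both slots, which is exactly what one expects from a bifunctorial product relation, and transitivity is the standard device for combining two such one-sided monotonicity results.

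Concretely, I would argue in two steps. First, applying the proposition with $\phi$ held fixed and using the hypothesis $\psi \precsim \psi'$ yields
\[
\phi \cdot \psi \precsim \phi \cdot \psi'.
\]
Second, applying the proposition with $\psi'$ held fixed in the second slot and the hypothesis $\phi \precsim \phi'$ in the first slot yields
\[
\phi \cdot \psi' \precsim \phi' \cdot \psi'.
\]
Chaining these two relations via transitivity of $\precsim$ gives $\phi \cdot \psi \precsim \phi' \cdot \psi'$, which is the desired conclusion. Passing to equivalence classes then gives $[\phi]\cdot[\psi] \leq [\phi']\cdot[\psi']$ in $W(A,C)$.

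There is essentially no obstacle here: the only subtlety is to make sure that the two intermediate c.p.c.\ order zero maps $\phi \cdot \psi'$ and $\phi' \cdot \psi$ really are c.p.c.\ order zero maps from $A$ to $M_\infty(C)$, which follows from the construction of the composition product recalled just before the proposition (the ampliation $(\psi')^{(\infty)}$ and $\psi^{(\infty)}$ are c.p.c.\ order zero, and composition of c.p.c.\ order zero maps with $*$-homomorphisms, or more generally the composition considered here, preserves the order zero property as used in the proof of the proposition). Transitivity of $\precsim$ itself was noted right after Proposition \ref{prop:subeq}, so no additional work is needed.
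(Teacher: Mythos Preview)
Your proof is correct and matches the paper's approach: the corollary is stated without proof in the paper precisely because it follows by transitivity from the two one-sided monotonicity relations established in the proof of the preceding proposition (together with Proposition~\ref{prop:compositions}). Your chain $\phi\cdot\psi \precsim \phi\cdot\psi' \precsim \phi'\cdot\psi'$ is exactly the intended argument.
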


It is clear that, for any local \Cs-algebra $A$, $W(A,A)$ has a natural semiring
structure. In particular, the class of the embedding $\iota_A:A\to M_\infty(A)$
in $W(A,A)$ provides a unit $[\iota_A]$.

\begin{example} With $A=\CC$, we obtain the semigroup $W(\CC,\CC) = W(\CC) =
\IN$. It is an easy exercise to verify that, if $[\phi],[\psi]\in W(\CC,\CC)$,
the product for the corresponding positive elements $[h_\phi],[h_\psi]\in
W(\CC)$ is given by the tensor product $[h_\phi\otimes h_\psi]$. Therefore, the
composition product corresponds to the ordinary product between natural numbers
in $\IN$.
\end{example}

\begin{definition}[Invertible element in $W$] Let $A$ and $B$ be separable
\Cs-algebras. An element $\Phi\in W(A,B)$ is said to be invertible if there
exists $\Psi\in W(B,A)$ such that $\Phi\cdot\Psi = [\iota_A]$ and $\Psi\cdot\Phi
= [\iota_B]$.
\end{definition}

\begin{definition}[$W$-equivalence] Two separable \Cs-algebras $A$ and $B$ are
$W$-equivalent if there exists an invertible element in $W(A,B)$.
\end{definition}

The semigroup $WW(A,B)$ inherits the composition product directly from its definition, i.e. 
$WW(A,B)=W(A\otimes K, B\otimes K)$. However, one can give an equivalent definition
where it takes the form of a genuine composition, since there is no need of
considering matrix ampliations in this case. Thus, if $A$, $B$ and $C$ are
separable \Cs-algebras, and $\phi:A\to B$ and $\psi:B\to C$ are c.p.c. order
zero maps, then one can set
	$$\phi\cdot\psi := \psi\circ\phi.$$
Thus, $WW(A,A)$ has a natural semiring structure too, and the unit is seen to be
represented by the class of the identity map on $A\otimes K$, viz.
$[\id_{A\otimes K}]$.

\begin{definition}[Invertible element in $WW$] Let $A$ and $B$ be separable
\Cs-algebras. An element $\Phi\in WW(A,B)$ is said to be invertible if there
exists $\Psi\in WW(B,A)$ such that $\Phi\cdot\Psi = [\id_{A\otimes K}]$ and
$\Psi\cdot\Phi = [\id_{B\otimes K}]$.
\end{definition}

\begin{definition}[$WW$-equivalence] Two \Cs-algebras $A$ and $B$ are
$WW$-equivalent if there exists an invertible element in $WW(A,B)$.
\end{definition}

As said before, an application of the above definitions to the problem of classification of
\Cs-algebras is given in Section \ref{sec:classification}, where a slightly
stronger notion of invertibility, together with the notion of scale for the
bivariant Cuntz semigroup, is introduced.

Another important aspect of the product introduced in this section is the map $W(A,B)\to\Hom(W(A),W(B))$, and its stabilized counterpart, i.e. $WW(A,B)\to\Hom(\Cu(A),\Cu(B)))$. This arises from the observation in \cite{wz2009} that any c.p.c. order zero map induces a map a the level of the Cuntz semigroups and that the correspondence $[\phi]\mapsto W(\phi)$ is well-defined. The same correspondence can be recovered by means of the composition product of this section. Indeed, one can define a map from $W(A,B)$ to $\Hom(W(A),W(B))$ by exploiting the isomorphism $W(A)\cong W(\CC,A)$ for any local \Cs-algebra $A$. In particular, one can set $\gamma: W(A,B)\to\Hom(W(A),W(B))$ to be the map given by
	$$\gamma(\Phi)(s) := s\cdot\Phi,\qquad\forall s\in W(A)\cong W(\CC,A),\Phi\in W(A,B).$$
Notice that $\gamma(\Phi)(s)$ belongs to $W(B)$ by the composition product, and, moreover, $\gamma(\Phi)(s)= W(\phi)(s)$ for any representative $\phi$ of $\Phi$ (cf. Proposition \ref{prop:MorphismsW}).

It seems as a natural question to wonder whether the map $\gamma$ is surjective in general, or if there exists a special class of \Cs-algebras for which this is the case. As interesting as this question is, which could lead to a \emph{Cuntz analogue} of the UCT class, this aspect will be addressed elsewhere.


\section{Examples}\label{sec:Examples}

\subsection{Purely infinite \Cs-algebras}

In this section we determine the bivariant Cuntz semigroup, $WW(A,B)$, whenever $B$ is a Kirchberg algebra and $A$ is a unital and exact \Cs-algebra, and both algebras are separable.  Recall that a Kirchberg algebra is a purely infinite, simple and nuclear \Cs-algebra (see e.g. \cite{rordam}).

We will use the following fundamental approximation result for u.c.p. maps on unital Kirchberg algebras (\cite[Corollary 6.3.5]{rordam}).

\begin{lemma}\label{fundamental}
Let $B$ be a unital Kirchberg algebra, $\rho: B \to B$ be a u.c.p. map, $F \subset B$ be a finite subset and $\e >0$. Then, there exists an isometry $s \in B$ such that $\| s^* b s - \rho(b) \| \leq \e \| b \|$ for all $b \in F$.
\end{lemma}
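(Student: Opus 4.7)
The plan is to combine Stinespring's dilation theorem with two structural features of Kirchberg algebras: the abundance of isometries coming from $\mathcal O_\infty$-absorption, and Kirchberg's absorption theorem, which says that any unital nuclear $*$-endomorphism of a Kirchberg algebra is approximately unitarily equivalent to the identity, with unitaries lying inside the algebra itself.

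First, I would apply Stinespring's theorem to the u.c.p. map $\rho : B \to B$ to obtain a unital $*$-representation $\pi : B \to \mathcal B(H)$ on a separable Hilbert space $H$ and an isometry $V : \mathcal H \to H$ (with $B$ faithfully represented on $\mathcal H$) such that $\rho(b) = V^* \pi(b) V$ for all $b \in B$. Then, using that $B$ is $\mathcal O_\infty$-absorbing so there is a unital embedding $\mathcal O_\infty \hookrightarrow B$, one obtains an infinite sequence of isometries in $B$ with pairwise orthogonal ranges, which can be used to ``internalise'' $\pi$: the auxiliary space $H$ is absorbed into $B$ via these isometries so that $\pi$ is realised as a unital nuclear $*$-endomorphism $\tilde\pi : B \to B$ and $V$ becomes an isometry $\tilde V \in B$ with $\tilde V^* \tilde\pi(b) \tilde V = \rho(b)$.

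Second, I would invoke Kirchberg's absorption theorem applied to $\tilde\pi$: for the given finite set $F \Subset B$ and $\e > 0$ there exists a unitary $u \in B$ with $\| u^* \tilde\pi(b) u - b \| < \e$ for all $b \in F$. Setting $s := u^* \tilde V \in B$, one checks that $s$ is an isometry and
\[
\| s^* b s - \rho(b) \| = \| \tilde V^* (u b u^* - \tilde\pi(b)) \tilde V \| \leq \| u b u^* - \tilde\pi(b) \| < \e,
\]
uniformly over $b \in F$ (after a routine rescaling of $\e$ to obtain the stated bound $\e \| b \|$).

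The main obstacle is the transition from the abstract Stinespring dilation, which a priori lives on an auxiliary Hilbert space outside $B$, to an isometry residing inside $B$ itself. This is precisely where the combined force of $\mathcal O_\infty$-absorption (providing enough isometries inside $B$ to encode Hilbert space geometry) and Kirchberg's absorption theorem (identifying all unital nuclear endomorphisms of $B$ up to approximate unitary equivalence by unitaries in $B$) becomes essential, allowing the entire dilation-compression argument to be carried out internally in $B$.
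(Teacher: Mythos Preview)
The paper does not give a proof of this lemma at all: it is stated with a direct citation to \cite[Corollary 6.3.5]{rordam} and then used as a black box. So there is no ``paper's own proof'' to compare your sketch against, only R{\o}rdam's argument in the reference.

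Judged on its own terms, your outline has the right architecture (dilate, internalise, absorb), but Step~3 invokes a statement that is false. It is \emph{not} true that every unital nuclear $*$-endomorphism of a Kirchberg algebra $B$ is approximately unitarily equivalent to $\id_B$. By the Kirchberg--Phillips classification, two unital $*$-monomorphisms $B\to B$ are approximately unitarily equivalent precisely when they agree in $KK(B,B)$; as soon as $K_*(B)$ is rich enough (for instance $K_0(B)\cong\ZZ^2$ with $[1_B]$ not a generator) there exist unital $*$-endomorphisms whose $KK$-class differs from that of the identity, and these cannot be approximately inner. Thus you cannot pass from an arbitrary internalised $\tilde\pi$ to $\id_B$ by conjugation with a unitary, and the argument breaks at exactly the point you call ``Kirchberg's absorption theorem''.

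What makes the result true, and what R{\o}rdam's proof exploits, is that one never needs to internalise a general Stinespring dilation. Instead one uses nuclearity of $B$ to factor $\rho$ approximately through a matrix algebra, $\rho\approx_{F,\eps}\psi\circ\varphi$ with $\varphi:B\to M_n$ and $\psi:M_n\to B$ both u.c.p., and then builds the isometry directly from the finite Kraus data of $\psi$ together with Cuntz isometries coming from a unital copy of $\mathcal O_\infty$ inside $B$ (taken in the relative commutant via $B\cong B\otimes\mathcal O_\infty$, so that they commute with the elements of $F$). In effect the only ``endomorphism'' that appears is an ampliation of $\id_B$ by isometries commuting with $F$, for which the needed approximation is immediate; no appeal to a general approximate-unitary-equivalence theorem is required. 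Your internalisation step would have to be carried out in precisely this controlled way to avoid the $KK$-obstruction.
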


We also need the following interpolation lemma for u.c.p. maps which follows from Arveson's extension theorem. Recall that an operator system in a unital \Cs-algebra $A$ is a closed self-adjoint subspace of $A$ containing the identity of $A$. 

\begin{lemma}\label{approx-extension}
Let $B$ be a nuclear \Cs-algebra, $E\subset B$ be a finite dimensional operator system, $\eta : E \to B$ be a u.c.p. map, and $\e >0$. Then, there exists a u.c.p. map $\tilde{\eta} : B \to B$ such that $\| \tilde{\eta} (x) - \eta (x) \| \leq \e \|x\|$ for all $ x \in E$.
\end{lemma}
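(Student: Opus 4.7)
The plan is to combine Arveson's extension theorem with the completely positive approximation property of nuclear \Cs-algebras. First I would fix a faithful unital representation $B\subseteq B(H)$ and apply Arveson to $\eta:E\to B\subseteq B(H)$ (using injectivity of $B(H)$) to obtain a u.c.p. extension $\bar\eta:B\to B(H)$. The obstacle at this stage is that $\bar\eta$ takes values in $B(H)$ rather than in $B$, so I need to compress the output back to $B$ while controlling the error on $E$.

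The second step exploits finite-dimensionality of $E$: its closed unit ball is norm-compact, so I would choose a finite $\delta$-net $\{x_1,\ldots,x_N\}$ in the unit ball of $E$, with $\delta:=\e/3$, and apply nuclearity of $B$ to the finite set $F:=\{\eta(x_1),\ldots,\eta(x_N)\}\subseteq B$ to obtain u.c.p. maps $\phi:B\to M_n$ and $\psi:M_n\to B$ with $\norm{\psi(\phi(y))-y}<\delta$ for every $y\in F$. Since $M_n\cong B(\CC^n)$ is injective, a second application of Arveson extends $\phi$ to a u.c.p. map $\tilde\phi:B(H)\to M_n$ with $\tilde\phi|_B=\phi$.

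I then set $\tilde\eta:=\psi\circ\tilde\phi\circ\bar\eta:B\to B$, which is u.c.p. by construction. The verification is routine: since $\bar\eta|_E=\eta$ and $\tilde\phi|_B=\phi$, one has $\tilde\eta(x_i)=\psi(\phi(\eta(x_i)))$, which lies within $\delta$ of $\eta(x_i)$; a $\delta$-net argument combined with contractivity of $\tilde\eta$ and $\eta$ then yields $\norm{\tilde\eta(x)-\eta(x)}<3\delta=\e$ for every $x$ in the unit ball of $E$, and hence $\norm{\tilde\eta(x)-\eta(x)}\leq\e\norm{x}$ for all $x\in E$ by scaling. The only delicate point of the argument is arranging the compositions so that the image actually sits inside $B$, which is precisely what the twofold use of Arveson into the injective targets $B(H)$ and $M_n$ accomplishes, with the final collapse to $B$ performed by $\psi$.
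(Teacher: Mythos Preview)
Your argument is correct; the ingredients (Arveson's extension theorem and the completely positive approximation property of $B$) are the same as in the paper, but the organization differs. The paper proceeds more directly: since $B$ is nuclear, the map $\eta:E\to B$ itself is nuclear, so one can choose u.c.p. maps $\rho:E\to M_n$ and $\phi:M_n\to B$ with $\norm{\phi\circ\rho-\eta}\leq\eps$, then apply Arveson once to extend $\rho$ to a u.c.p. map $\tilde\rho:B\to M_n$ and set $\tilde\eta:=\phi\circ\tilde\rho$. This avoids the detour through $B(H)$, the second Arveson extension, and the $\delta$-net argument. Your route still works, and the extra machinery (extending first to $B(H)$, then compressing via CPAP and a second Arveson application into $M_n$) is harmless; it simply recovers, by hand, the statement that $\eta$ factors approximately through a matrix algebra, which the paper invokes in one step via nuclearity of the map.
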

\begin{proof} The nuclearity of $B$ makes the inclusion $E \hookrightarrow B$ a
nuclear map. Therefore, for any $\eps > 0$, there is an $n\in\IN$ and u.c.p. maps
$\rho:E\to M_n$ and $\phi:M_n\to B$ such that
	$$\norm{\phi\circ\rho-\eta}\leq\eps.$$
By Arveson's extension theorem, the map $\rho$ admits a u.c.p. extension to $B$, i.e.
there exists $\tilde\rho : B\to M_n$ u.c.p. such that $\tilde\rho|_E = \rho$.
The situation is depicted in the following diagram
$$\begin{tikzcd}
	E\ar[hookrightarrow]{d}\ar{rr}{\eta}\ar{dr}{\rho} & & B\\
	B\ar{r}{\tilde\rho} & M_n\ar{ur}{\phi} & ,
\end{tikzcd}$$
which commutes up to $\eps$. By setting $\tilde\eta := \phi\circ\tilde\rho$ we
then have $\norm{\tilde\eta|_E-\eta}\leq\eps$.
\end{proof}

Using these two lemmas, we can show a subequivalence result for order zero maps into Kirchberg algebras. First we introduce some notation: given an order zero map $\phi :A \to B$, we denote, as before,  its decomposition by $h \pi$, where $\pi : A \to \mathcal{M} (C^*(\phi(A)))$ and $h \in \mathcal{M} (C^*(\phi(A)))$. In fact, the range of $\pi $ lies in $\mathcal{M}(B_{\phi})$, where $B_{\phi}$ is the hereditary subalgebra $\overline{\phi(A) B \phi(A)}$. Let $g_{\e}$ be the continuous function on $[0,\infty)$ which is 0 on $ [0,\e/2)$, 1 on $[\e , \infty)$, linear otherwise, and let $h_{\e}= g_{\e}(h)$ and $\phi_{\e} = h_{\e} \pi$. It is not hard to see that there exists a continuous positive function $k_{\e}$ vanishing on $[0,\e/2]$ such that $t k_{\e} (t) = g_{\e}(t)$ so that $\bar{h}_{\e}:= k_{\e}(h)$ satisfies $\bar{h}_\eps h=h_{\e} $ and $\|\bar{h}_\eps \| \leq \frac{1}{\e}$.  Hence, $h_{2\e}^{1/n}$ converges to an open projection $p_{\e}$ in $B$ as $n \to \infty$ and $\pi_{\e} = p_{\e} \pi$ can be regarded as a $*$-homomorphism from $A$ to the multiplier algebra $\mathcal M( B_{\e} )$, where $B_{\e} = p_{\e} B p_{\e} \cap B = h_{2\e} B h_{2\e}$. Now $h_{\e} \in B_{\e/2}$ so that $\phi_{\e}= h_{\e}\pi_{\e/2} $ is a decomposition of this order zero map. There is a canonical c.p. map from $\phi_{\e}(A)$ to $\pi_{\e}(A)$  given by $ x \mapsto p_{\e} x p_{\e}$. Note that $p_{\e} \phi_{\e}(a) p_{\e} = \pi_{\e}(a)$ for all $a \in A$. 

Observe further that $\phi$ is injective if and only if $\pi$ is injective, and, in this case, 
	$$\lim_{\e \to 0^+} \| \pi_{\e} (a) \| = \|a\|$$
for every $a \in A$. Moreover, $\pi_{\e}(A)$ may be identified with $A/\ker \pi_{\e}$. If $A$ is nuclear, then this quotient map admits a completely positive lift, and, if $A$ is only exact, then the quotient map $ A \to A/\ker \pi_{\e}$ has the local lifting property, i.e. given any finite dimensional operator system $E \subset   A/\ker \pi_{\e}$ there exists a u.c.p. map $\lambda_{\e} : E\to A$ with $\pi_{\e} \circ \lambda_{\e} = \textup{id}_{E}$. In case $A$ happens to be simple, then $\ker \pi_{\e}= 0$ for sufficiently small $\e$, so that the existence of this lift is obvious in this situation.

\begin{lemma}\label{ApproxUnital}
Let $A$ be a unital separable exact C*-algebra, $B$ be a unital Kirchberg algebra, $\phi_{1}, \phi_2 :A \to B$ be c.p.c. order zero maps and assume that $\phi_1$ is injective. Then, $\phi_2 \precsim \phi_1$, i.e., there exists a sequence of elements $\seq b$ in $B$ such that $\|b_n^*\phi_1(a)b_n-\phi_2(a)\|\to 0$ for all $a\in A$.
\end{lemma}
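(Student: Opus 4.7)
Fix a finite set of contractions $F \Subset A$ with $1_A \in F$ and $\e > 0$; the task is to produce $b \in B$ such that $\|b^*\phi_1(a)b - \phi_2(a)\| < \e$ for all $a \in F$. My plan is first to reduce to the cut-down $\phi_{1,\delta} = h_{1,\delta}\pi_{1,\delta/2}$ for a conveniently small $\delta > 0$: by Corollary \ref{cor:cfcforcpc} we have $\phi_{1,\delta} \precsim \phi_1$, so any element of $B$ that approximately implements $\phi_2 \precsim \phi_{1,\delta}$ on $F$ can be post/pre-composed with a witness of $\phi_{1,\delta} \precsim \phi_1$ to give the desired $b$. Note that on the support of $h_{1,\delta}$ the element $h_{1,\delta}$ behaves like the projection $p_{1,\delta/2}$, and the ``approximate inverse'' $\bar h_{\delta} = k_{\delta}(h_1)$ introduced in the paragraph preceding the lemma lets me pass freely between conjugation by $\phi_{1,\delta}$ and conjugation by $\pi_{1,\delta/2}$ on the relevant corner.

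The main construction is then to produce a u.c.p.\ map $\rho : B \to B$ whose restriction to the operator system $E := \mathrm{span}(\pi_{1,\delta/2}(F) \cup \{p_{1,\delta/2}\})$ approximately sends $\pi_{1,\delta/2}(a) \mapsto \phi_2(a)$ for $a \in F$. Since $A$ is exact, the quotient $A \to A/\ker\pi_{1,\delta/2}$ has the local lifting property recalled above, so there is a u.c.p.\ lift $\lambda : E \to A$ with $\pi_{1,\delta/2} \circ \lambda = \mathrm{id}_E$ and $\lambda(p_{1,\delta/2}) = 1_A$. Injectivity of $\phi_1$ forces $\|\pi_{1,\delta/2}(a)\| \to \|a\|$ as $\delta \to 0$ uniformly on the finite-dimensional subspace $\mathrm{span}(F)$, so for $\delta$ small enough I can arrange that $\lambda \circ \pi_{1,\delta/2}$ is close to the identity on $F$. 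Composing, $\eta := \phi_2 \circ \lambda : E \to B$ is a c.c.p.\ map with $\eta(\pi_{1,\delta/2}(a)) \approx \phi_2(a)$ for $a \in F$. After a mild modification to make it unital (adding a correction built from $1_B - h_2$ and a state, so as to preserve the desired approximate equality up to a further $O(\e)$ error), Lemma \ref{approx-extension} together with Arveson's theorem produces a u.c.p.\ extension $\tilde\rho : B \to B$ that still agrees with $\eta$ on $E$ up to the prescribed tolerance.

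With $\tilde\rho$ in hand, Lemma \ref{fundamental} applied to the finite subset of $B$ consisting of suitable $B$-approximations of $\pi_{1,\delta/2}(F)$ (e.g., $h_{1,2\delta}^{1/n}\pi_{1,\delta/2}(a)h_{1,2\delta}^{1/n}$ for large $n$) yields an isometry $s \in B$ with $\|s^*xs - \tilde\rho(x)\| < \e'$ on that subset. Combining this isometry with the first-paragraph reduction — concretely, taking $b$ to be a product of $s$ with the elements implementing $\phi_{1,\delta}\precsim\phi_1$ and with $\bar h_{\delta}^{1/2}$ to convert the $\pi_{1,\delta/2}$-conjugation back into a $\phi_1$-conjugation — gives an element of $B$ satisfying the required estimate. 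A standard sequencing argument over a cofinal family of pairs $(F,\e)$ then provides the sequence $\seq{b}$ in the statement.

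The main obstacle I anticipate is twofold. First, $\phi_2$ is generally not unital, so the c.c.p.\ map $\eta$ does not directly extend to a u.c.p.\ map on $B$; the unitization has to be done so as not to spoil the approximate equality $\eta(\pi_{1,\delta/2}(a)) \approx \phi_2(a)$ on $E$, and this is where the structural fact that $B$ is purely infinite simple (hence has a wealth of isometries and projections absorbing $1_B - h_2$) is crucially used. Second, when $A$ is not simple the kernel $\ker\pi_{1,\delta/2}$ need not vanish, and the LLP lift $\lambda$ only inverts $\pi_{1,\delta/2}$ on $E$; the quantitative use of the injectivity of $\phi_1$ — via the pointwise convergence $\|\pi_{1,\delta/2}(a)\| \to \|a\|$ upgraded to uniform convergence on the finite-dimensional $\mathrm{span}(F)$ — is what guarantees that $\lambda(\pi_{1,\delta/2}(a))$ is norm-close to $a$ for $a \in F$, and hence that $\phi_2(\lambda(\pi_{1,\delta/2}(a))) \approx \phi_2(a)$.
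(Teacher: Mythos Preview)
Your overall strategy matches the paper's: cut $\phi_1$ down to $\phi_{1,\delta}$, build a u.c.p.\ map on a finite-dimensional operator system that approximately sends ``$\phi_1$-data'' to $\phi_2$, extend it via Lemma~\ref{approx-extension}, realise it by an isometry via Lemma~\ref{fundamental}, and assemble the witness. There are, however, two genuine gaps. The first is the placement of your operator system: $E = \mathrm{span}(\pi_{1,\delta/2}(F)\cup\{p_{1,\delta/2}\})$ lives in $\mathcal M(B_{\delta/2})$, not in $B$, and both Lemma~\ref{approx-extension} and Lemma~\ref{fundamental} require their input to sit inside the nuclear (resp.\ Kirchberg) algebra $B$. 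Your ``$B$-approximations'' $h_{1,2\delta}^{1/n}\pi_{1,\delta/2}(a)h_{1,2\delta}^{1/n}$ come too late, since $\tilde\rho:B\to B$ must already be produced from an operator system in $B$. The paper's fix is to work with $E_{1,\delta}:=\phi_{1,\delta}(E)+\CC 1_B\subset B$ from the outset, recovering $e$ from $\phi_{1,\delta}(e)$ by the compression $x\mapsto p_\delta x p_\delta$ (which sends $\phi_{1,\delta}(e)\mapsto\pi_{1,\delta}(e)$ and annihilates $1-h_{1,\delta}$).

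The second gap is your handling of the non-simple case. The implication ``$\|\pi_{1,\delta/2}(a)\|\to\|a\|$ on $\mathrm{span}(F)$, hence $\lambda(\pi_{1,\delta/2}(a))$ is norm-close to $a$'' is false: writing $\lambda(\pi_{1,\delta/2}(a))=a+j_\delta(a)$ with $j_\delta(a)\in J_\delta:=\ker\pi_{1,\delta/2}$, one only gets $\|j_\delta(a)\|\leq 2\|a\|$, and there is no reason for $j_\delta(a)\to 0$ in norm. What makes the argument go through is a different mechanism: since $\phi_1$ is injective one has $\bigcap_\delta J_\delta=\{0\}$, whence $\|\omega|_{J_\delta}\|\to 0$ for every bounded linear functional $\omega$; combined with the nuclearity of $\phi_2:A\to B$ (automatic, since $B$ is nuclear) this forces $\phi_2(j_\delta(a))$ to be small even though $j_\delta(a)$ need not be. That nuclearity step is precisely the missing idea. (A smaller point in the same vein: after unitising by adding $\omega(\cdot)(1-h_{2,\delta})$ the discrepancy with $\phi_2$ is of order $\|1-h_2\|$, not $O(\e)$; the paper removes it by conjugating with $h_2^{1/2}$ at the very end, so that the witness is $b=\bar h_{1,\delta}^{1/2}\,s\,h_2^{1/2}$.)
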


\begin{proof} We will show that given a finite dimensional operator system $E \subset A$ and $\e >0$, there exists $b \in B$ such that $\| b^* \phi_1(e) b - \phi_2(e) \| \leq \e$ for all $ e \in E$ with $\| e \| \leq 1$, which suffices to conclude the proof. Continuing with the notation introduced above, write $\phi_{k}= h_k\pi_k$ for $k=1,2$, and $h_{k,\e}$, $\pi_{k,\e}$ for $k=1,2$ and $\e>0$. We assume that for every $\delta>0$, we have $h_{k,\delta} \neq 1$ for $k=1,2$; otherwise the proof works with minor modifications. We further assume that $A$ is simple, and describe below the necessary changes in the non-simple case. 

Choose $\delta$ small enough so that $\pi_{1,\delta}$ is non-zero; hence, it is injective with inverse $\lambda_{\delta} = \pi_{1,\delta}^{-1}: \pi_{1,\delta}(A) \to A$. We define a u.c.p. map $\rho_{1,\delta}$ from the operator system  
	$$E_{1,\delta}:=\phi_{1,\delta}(E) + \CC 1= \phi_{1,\delta}(E) + \CC (1-h_{1,\delta})$$
to $A$ as follows:
$$
\rho_{1,\delta} (\phi_{1,\delta}(e) + \lambda (1- h_{1,\delta})) = \lambda_{\delta} (p_{\delta}   (\phi_{1,\delta}(e) + \lambda (1- h_{1,\delta})) p_{\delta}).
$$ 
Since $1-h_{1,\delta}$ and $p_{\delta}$ are orthogonal, this is equal to $e$. Now fix any state $\omega$ on $A$ and consider the unital modification of $\phi_{2,\delta}$ given by $a \mapsto \phi_{2,\delta}(a) + \omega (a) (1 -h_{2,\delta})$. The composition of $\rho_{1,\delta}$ followed by this map gives a u.c.p. map 
$
\eta: E_{1,\delta} \to B.
$
By Lemma \ref{approx-extension}, we can find $\tilde{\eta}: B \to B$ u.c.p. such that for all $x \in E_{1,\delta}$
$$
\|\tilde{\eta} (x) - \eta (x) \| \leq \tfrac{\e}{6} \| x\|.
$$ 
In particular,  
$$
\|\tilde{\eta} (x) - \eta (x) \| \leq \tfrac{\e}{6} \text{ for } \| x \| \leq 1.
$$ 
Since $\eta (1-h_{1,\delta}) =0$, we have 
$$
\| \tilde{\eta} (1-h_{1,\delta}) \| \leq \tfrac{\e}{6}.
$$ 
By Lemma \ref{fundamental} we can find an isometry $s \in B$ such that 
$$
\| s^* xs - \tilde{\eta}(x) \| \leq \tfrac{\e}{6} \|x \|.
$$
Therefore, it is not hard to see that 
$$
\| s^* (\phi_{1,\delta} (e) + \lambda (1-h_{1,\delta} ) )s - s^* \phi_{1,\delta} (e) s \| \leq \tfrac{2\e}{6} 
|\lambda |  \leq \tfrac{\e}{3}.
$$
It follows that 
$$
\| h_2^{1/2} s^* \phi_{1,\delta} (e) s h_2^{1/2} - h_2^{1/2}  \phi_{2, \delta}(e) h_2^{1/2} \| \leq \tfrac{2\e}{3},
$$
whenever $\ e \in E$ and $\| e \| \leq 1$. Next we remark that $\|h_2(1-h_{2,\delta})\| = \| h_2 - h_2 h_{2,\delta} \|  \leq \delta$. Thus, choosing $\delta \leq \tfrac{\e}{6}$, we find that 
$\|  h_2^{1/2}  \phi_{2, \delta} (e) h_2^{1/2} - \phi_2(e) \| \leq \tfrac{\e}{6} \|e\|$ for all $ e \in E$. Writing $\phi_{1,\delta} = \bar{h}_{1,\delta} \phi_1$,  we finally get
$$
 \| h_2^{1/2} s^* \bar{h}_{1,\delta}^{1/2} \phi_{1} (e) \bar{h}_{1,\delta}^{1/2} s h_2^{1/2} - \phi_{2}(e)  \| \leq \e,
$$  
for $\|e \| \leq 1$, so that $b=\bar{h}_{1,\delta}^{1/2} s h_2^{1/2}$ is as required. 

If $A$ is not simple we have to modify the proof as follows: replace the lift $\pi_{1,\delta}^{-1}$ by a local lift $\lambda_{\delta} : E/\ker \pi_{1,\delta} \to A$ depending on $E$ and $\delta$. We have $\lambda_{\delta} \circ  \pi_{1,\delta} (e) = e + j_{\delta} (e)$, where $j_{\delta}(e) \in J_{\delta} :=\ker \pi_{1,\delta}$. Note that $\| j_{\delta} (e) \| \leq 2 \|e \|$ and that for any bounded linear functional $\omega$ we have $\|\omega |_{J_{\delta}}\| \to 0$ as $\delta \to 0$\footnote{This follows simply from the fact that $\bigcap_{\delta} J_{\delta} =\{ 0\}$.}. The same is true for every u.c.p. map into a finite dimensional \Cs-algebra. Using nuclearity of the map $\phi_2: A \to B$ we can conclude the proof similarly to the first part.
\end{proof}

\begin{remark} Under the assumptions of Lemma \ref{ApproxUnital}, all injective order zero maps $\phi : A \to B$ are  Cuntz-equivalent. In particular for $A$ simple there can be at most two such classes. On the other hand, given a Kirchberg algebra $B$ every separable exact \Cs-algebra $A$ embeds into $B$. (By Kirchberg's embedding theorem there exists an embedding of $A$ into $\mathcal{O}_2$. Combine this with a  non-unital  embedding of $\mathcal{O}_2$ into $\mathcal{O}_{\infty} \subset \mathcal{O}_{\infty} \otimes B \cong B$, which exists by \cite[4.2.3]{rordam}.)

For $A$ not necessarily simple, Lemma \ref{ApproxUnital} shows that two order zero maps $\phi, \psi : A \to B$ are equivalent if and only if they have the same kernel. Hence, the Cuntz-equivalence classes are labelled by the ideal space of $A$. To show that every ideal $J$ of $A$ occurs one can apply the same argument as in the simple case to obtain an embedding of the quotient $A/J$ into $B$. (Note that $A/J$ is an exact \Cs-algebra.)
\end{remark}

\begin{theorem}
Let $A$ be a unital separable simple exact C*-algebra and $B$ be any Kirchberg algebra, then $WW(A,B)\cong\{0,\infty\}$. If $A$ is only unital, separable and exact, then $WW(A,B)$ is labelled by the ideal space of $A$, that is there is a one-to-one correspondence between elements  $[\phi] \in  WW(A,B)$ and (closed two-sided) ideals in $A$ given by $J_{\phi} = \ker \phi = \ker \pi_{\phi}$. Here the addition of elements in $WW(A,B)$ corresponds to the intersection of the corresponding ideals.

\end{theorem}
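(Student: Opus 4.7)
The plan is to establish an order-reversing bijection $\Phi : WW(A,B) \to \mathrm{Ideals}(A)$ sending $[\phi]\mapsto J_\phi := \ker\phi$. By Proposition~\ref{prop:stability} and Remark~\ref{rem:Repre} I first represent each class by a c.p.c. order zero map $\phi : A\to B\otimes K$; the structure theorem (Corollary~\ref{cor:structure}) gives $\phi = h_\phi\pi_\phi$ with $h_\phi = \phi(1_A)$ strictly positive in $C^*(\phi(A))$, which forces $\ker\phi = \ker\pi_\phi$ to be a closed two-sided ideal of $A$. This kernel is constant on Cuntz classes: if $\phi\precsim\psi$ via a witness $\{b_n\}$, then $\psi(a)=0$ forces $\phi(a) = \lim_n b_n^*\psi(a)b_n = 0$, so subequivalent maps have reverse-containing kernels, and equivalent maps share the same kernel.

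For surjectivity I reuse the construction from the remark following Lemma~\ref{ApproxUnital}: given any ideal $J\lhd A$, the quotient $A/J$ is unital, separable and exact, so Kirchberg's embedding theorem yields an embedding $A/J\hookrightarrow\mathcal{O}_2$; combining with a non-unital embedding $\mathcal{O}_2\hookrightarrow\mathcal{O}_\infty\hookrightarrow\mathcal{O}_\infty\otimes B\cong B$ (from $\mathcal{O}_\infty$-absorption, \cite[4.2.3]{rordam}) and the canonical inclusion $B\hookrightarrow B\otimes K$, and finally precomposing with the quotient $A\twoheadrightarrow A/J$, yields a $*$-homomorphism $A\to B\otimes K$ whose kernel is precisely $J$.

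Injectivity is the main technical step. If $\ker\phi_1=\ker\phi_2=J$, then both maps factor through the (unital, separable, exact) quotient $A/J$ as injective c.p.c. order zero maps, reducing the task to proving that any two injective c.p.c. order zero maps $\phi,\psi:A\to B\otimes K$ are Cuntz-equivalent; this is the stabilized analogue of the remark after Lemma~\ref{ApproxUnital}. I would prove it by transporting the unital-Kirchberg argument into a matrix corner: given $F\Subset A$ and $\eps>0$, first pass to the truncations $\phi_\delta,\psi_\delta$ with $\|\phi_\delta(a)-\phi(a)\|,\|\psi_\delta(a)-\psi(a)\|<\eps/4$ on $F$; choose $n$ large enough that, after a small perturbation, the defining data $h_{\phi,\delta},h_{\psi,\delta}$ of these truncations lie inside the unital Kirchberg subalgebra $M_n(B)\subset B\otimes K$; finally apply Lemma~\ref{ApproxUnital} inside $M_n(B)$ to produce the witnessing element $b\in M_n(B)\subset B\otimes K$ with $\|b^*\phi(a)b-\psi(a)\|<\eps$ for $a\in F$.

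The intertwining of the semigroup operation with intersection of ideals is immediate from $(\phi_1\hoplus\phi_2)(a)=\phi_1(a)\oplus\phi_2(a)$, which vanishes iff $a\in\ker\phi_1\cap\ker\phi_2$. The simple case is then the special instance $\mathrm{Ideals}(A)=\{0,A\}$, yielding $WW(A,B)\cong\{0,\infty\}$ with $\infty+\infty=\infty$ corresponding to $0\cap 0=0$. The principal obstacle is the matrix-corner localization in the injective step: naive compressions $u_n(\cdot)u_n$ by projections in an approximate unit of $B\otimes K$ destroy the order zero property, so the argument must balance functional-calculus truncation (which preserves order zero but need not land in a matrix corner) with a careful perturbation of $(h_{\phi_k,\delta},\pi_{\phi_k,\delta})$ forcing the image into $M_n(B)$ while keeping all error terms controlled by $\eps$.
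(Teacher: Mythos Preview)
Your overall framework---well-definedness of $[\phi]\mapsto\ker\phi$, surjectivity via Kirchberg embedding, and the additive statement---matches the paper and is fine. The gap is exactly where you flag it: the reduction from $B\otimes K$ to a unital Kirchberg target so that Lemma~\ref{ApproxUnital} applies.

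Your plan to perturb the truncated data $(h_{\phi,\delta},\pi_{\phi,\delta})$ into a matrix corner $M_n(B)$ does not obviously go through, and you do not carry it out. Approximating $h_{\phi,\delta}$ by an element of $M_n(B)$ is easy, but the support $*$-homomorphism $\pi_{\phi,\delta}$ lives in the multiplier algebra of the hereditary subalgebra generated by $\phi(A)$ in $B\otimes K$, and there is no reason this hereditary subalgebra should sit inside, or be conjugate into, any $M_n(B)$. Perturbing $h$ alone destroys the commutation with $\pi$ and hence the order zero structure; perturbing both coherently is precisely the unsolved obstacle you name.

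The paper avoids this entirely by exploiting pure infiniteness of $B$ rather than truncation. After reducing to $B$ unital Kirchberg, it chooses an infinite sequence of pairwise orthogonal projections $p_0,p_1,\ldots\in B$ each Murray--von~Neumann equivalent to $1_B$, and builds partial isometries $v_n\in B\otimes K$ with $v_n^*v_n = 1\otimes(e_{0,0}+\cdots+e_{n,n})$ and $v_nv_n^* = (p_0+\cdots+p_n)\otimes e_{0,0}$, each extending the previous one. Then $v_n\phi v_n^*$ converges pointwise to an order zero map $\phi_0:A\to B\otimes e_{0,0}\cong B$, and the $v_n$ themselves witness $\phi\sim\phi_0$. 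This is the missing idea: rather than squeezing $\phi$ into finite matrices over $B$, one uses that $B$ already absorbs $B\otimes K$ internally.
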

\begin{proof}
Every non-unital Kirchberg algebra is stable and can be written as a tensor product of a unital Kirchberg algebra and $K$. Applying this to $B \otimes K$ 
we may assume $B$ to be a unital Kirchberg algebra and $B\otimes K$ the target of $WW(A,B\otimes K)$. Then, we only need to remark that any order zero map $\phi: A \to B \otimes K$ is Cuntz equivalent to an order zero map with range in $B \otimes e_{0,0}$, since then we are able to apply Lemma \ref{ApproxUnital}. To prove this, choose a sequence of pairwise orthogonal projections $p_0, p_1, p_2, \ldots$ each of which are Murray-von Neumann equivalent to $1 \in B$. Such a sequence  exists by an easy induction argument. Now, let
	$$B_0 = \lim_{n \to \infty} (p_0 + \ldots + p_n)B (p_0 + \ldots + p_n) \subset B \otimes e_{0,0}.$$
By assumptions, there exists a sequence of partial isometries $v_n \in B \otimes K$ such that $v_n v_n^* = (p_0 + \ldots + p_n) \otimes e_{0,0}$ and $ v_n^* v_n = 1 \otimes (e_{0,0} +  \dots + e_{n,n}) $ and $v_{n+1} $ extends $v_n$. It follows that $v_n \phi v_n^*$ converges point-wise to an order zero map $\phi_0 : A \to B_0 \otimes e_{0,0} \subset B \otimes e_{0,0}$, and therefore $\seq v$ implements a Cuntz equivalence between $\phi$ and $\phi_0$.
\end{proof}


\subsection{Strongly Self-absorbing \Cs-algebras\label{ssec:ssa}}

We now establish a property of the bivariant Cuntz semigroup when its arguments
are tensored by a strongly self-absorbing \Cs-algebras. As an application of
this result we give some explicit computations of some bivariant Cuntz
semigroups.

As observed in \cite{tw}, every strongly self-absorbing \Cs-algebra $\mathcal D$
is a unital \Cs-algebra, different from $\CC$, for which there exists a unital
$*$-homomorphism $\gamma:\mathcal D\otimes \mathcal D\to \mathcal D$ satisfying
to $\gamma\circ(\id_{\mathcal D}\otimes 1_{\mathcal D})\au\id_{\mathcal D}$ (cf.
\cite[Proposition 1.10(i)]{tw}). Hence, in what follows, we shall assume that
every strongly self-absorbing \Cs-algebra $\mathcal D$ comes equipped with such
a map $\gamma$. Moreover, we adopt \cite[Definition 1.1]{tw} as the definition
of approximate unitary equivalence $\au$ between c.p.c. order zero maps. We also
refer to \cite[Proposition 1.2]{tw} for some well-known facts about this
relation. We start describing the connection between approximate unitary
equivalence and Cuntz comparison.

\begin{proposition}\label{prop:au} Let $A$ and $B$ be \Cs-algebras, and let
$\phi,\psi:A\to B$ be c.p.c. order zero maps such that $\phi\au\psi$. Then,
$\phi\sim\psi$ in $W(A,B)$.
\end{proposition}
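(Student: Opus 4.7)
The plan is to construct explicit witnesses for the two Cuntz subequivalences $\phi\precsim\psi$ and $\psi\precsim\phi$ by taking the unitaries implementing the approximate unitary equivalence and cutting them down by an approximate unit of a suitable hereditary subalgebra of $B$, so that the resulting sequence actually lies in $B$ rather than in the multiplier algebra or the unitization.

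The hypothesis $\phi\au\psi$ provides a sequence of unitaries $\seq u$ in a unitization (or in $\mathcal{M}(B)$) such that $\|u_n\phi(a)u_n^*-\psi(a)\|\to 0$ for every $a\in A$; multiplying by $u_n^*$ on the left and $u_n$ on the right yields the asymptotic identity
\[
\|u_n^*\psi(a)u_n-\phi(a)\|\to 0,\qquad\forall a\in A.
\]
Next, exploiting the separability of $A$, I would extract by a diagonal argument over a countable dense subset of $A$ a sequence $\seq f\subset B$ of positive contractions, drawn from an approximate unit of the hereditary \Cs-subalgebra of $B$ generated by $\phi(A)$, with the property that $\|f_n\phi(a)f_n-\phi(a)\|\to0$ for every $a\in A$.

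Setting $b_n:=u_nf_n$, the fact that $B$ is an ideal in the ambient unital algebra ensures $b_n\in B$. The triangle inequality then gives
\[
\|b_n^*\psi(a)b_n-\phi(a)\|\leq\|u_n^*\psi(a)u_n-\phi(a)\|+\|f_n\phi(a)f_n-\phi(a)\|,
\]
where for the first summand one uses contractivity of $f_n$, and both right-hand side terms tend to $0$ by construction. This establishes $\phi\precsim\psi$. The symmetric argument, swapping the roles of $\phi$ and $\psi$ and using $u_n^*$ together with an approximate unit for the hereditary subalgebra generated by $\psi(A)$, yields $\psi\precsim\phi$, whence $\phi\sim\psi$ and $[\phi]=[\psi]$ in $W(A,B)$.

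The only mild obstacle is that the unitaries implementing $\au$ need not belong to $B$ itself; the cutdown by the approximate unit $f_n$ rectifies this while preserving the relevant asymptotic identity, and separability is the only ingredient beyond the definition of $\au$ that is required.
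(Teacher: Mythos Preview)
Your proof is correct and follows essentially the same approach as the paper's: cut down the unitaries $u_n\in\mathcal M(B)$ implementing $\phi\au\psi$ by an approximate unit of $B$ (or, as you do, of the hereditary subalgebra generated by $\phi(A)$) to obtain a witnessing sequence in $B$ itself. The paper's proof is a one-line sketch of exactly this idea; your version supplies the triangle-inequality estimate explicitly, and the diagonal argument you mention is in fact unnecessary under the paper's blanket separability assumption, since a sequential approximate unit $\{f_n\}$ already satisfies $\|f_n\phi(a)f_n-\phi(a)\|\to0$ for every $a\in A$.
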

\begin{proof} By cutting down the sequence of unitaries $\seq u\subset\mathcal
M(B)$ by an approximate unit $\seq e\subset B$, if necessary, one gets a
sequence in $B$ that witnesses the sought Cuntz equivalence.
\end{proof}

As a consequence of the above result and Proposition \ref{prop:compositions}, we
have that, if $A$, $B$ and $C$ are \Cs-algebras, and $\phi,\psi:A\to
B$, $\eta:B\to C$ are c.p.c. order zero maps such that $\phi\au\psi$, then
$\eta\circ\phi\sim\eta\circ\psi$.

\begin{lemma}\label{lem:autensor} Let $A$, $B$, $C$ and $D$ be \Cs-algebras, $D$
unital, and let $\phi,\psi:A\to D$, $\eta:B\to C$ be c.p.c. order zero maps,
such that $\phi\au\psi$. Then, $\eta\otimes\phi\au\eta\otimes\psi$.
\end{lemma}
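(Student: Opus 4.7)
The plan is to lift the sequence of unitaries that implements $\phi\au\psi$ from $\mathcal M(D)$ to $\mathcal M(C\otimes D)$ by tensoring on the left with a unit. Concretely, if $\seq u\subset\mathcal M(D)=D$ (the last identification using unitality of $D$) is a sequence of unitaries such that $\norm{u_n\phi(a)u_n^*-\psi(a)}\to0$ for every $a\in A$, then I would define
	$$v_n := 1_{\mathcal M(C)}\otimes u_n\in\mathcal M(C)\otimes D\hookrightarrow\mathcal M(C\otimes D),$$
using the canonical embedding. Each $v_n$ is a unitary, since $1_{\mathcal M(C)}$ is a unitary in $\mathcal M(C)$ and $u_n$ is a unitary in $D$.

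Next I would compute the action of conjugation by $v_n$ on an elementary tensor $b\otimes a\in B\otimes A$:
	$$v_n(\eta\otimes\phi)(b\otimes a)v_n^* = \eta(b)\otimes u_n\phi(a)u_n^*,$$
so that
	$$\norm{v_n(\eta\otimes\phi)(b\otimes a)v_n^*-(\eta\otimes\psi)(b\otimes a)}\leq\norm{\eta(b)}\cdot\norm{u_n\phi(a)u_n^*-\psi(a)}\xrightarrow{n\to\infty}0.$$
By linearity the same convergence holds for arbitrary finite sums of elementary tensors in the algebraic tensor product $B\odot A$.

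Finally, to pass to an arbitrary element $x\in B\otimes A$, I would use a standard $\eps/3$-argument: given $\eps>0$, pick $y\in B\odot A$ with $\norm{x-y}<\eps/3$, then use the contractivity of $\eta\otimes\phi$ and $\eta\otimes\psi$ (both are c.p.c.\ order zero) and the isometric character of conjugation by the unitary $v_n$ to bound
	$$\norm{v_n(\eta\otimes\phi)(x)v_n^*-(\eta\otimes\psi)(x)}\leq\tfrac{2\eps}{3}+\norm{v_n(\eta\otimes\phi)(y)v_n^*-(\eta\otimes\psi)(y)},$$
and the second term goes to $0$ by the elementary-tensor computation.

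The only delicate point I expect is making precise the embedding $\mathcal M(C)\otimes D\hookrightarrow\mathcal M(C\otimes D)$ so that $v_n$ really is a unitary multiplier of $C\otimes D$, and implicitly fixing the tensor product being used (presumably the minimal one, as is the default in this paper). Once that is in place the rest is a routine continuity-and-density argument; no structural input from the order-zero property is needed beyond contractivity.
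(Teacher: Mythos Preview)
Your proposal is correct and follows exactly the same approach as the paper: take the witnessing unitaries $u_n\in D$ and pass to $1_{\mathcal M(C)}\otimes u_n\in\mathcal M(C)\otimes D\subset\mathcal M(C\otimes D)$. The paper's proof is essentially your first paragraph stated in one line, leaving the elementary-tensor computation and density argument implicit; your version simply spells out the details.
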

\begin{proof} Let $\seq u\subset D$ be the sequence that witnesses the
approximate unitary equivalence $\phi\au\psi$. Since $\mathcal M(C)\otimes
D\subset\mathcal M(C\otimes D)$, the sequence $\{1_{\mathcal M(C)}\otimes
u_n\}_{n\in\IN}\subset\mathcal M(C\otimes D)$ witnesses
$\eta\otimes\phi\au\eta\otimes\psi$.
\end{proof}

\begin{theorem} Let $A$, $B$ be local \Cs-algebras and $\mathcal D$ be a
strongly self-absorbing \Cs-algebra. The following isomorphism holds,
		$$W(A\otimes \mathcal D, B\otimes \mathcal D)\cong W(A, B\otimes \mathcal
		D).$$ 
\end{theorem}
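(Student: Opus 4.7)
The plan is to construct mutually inverse semigroup homomorphisms between the two sides by using the unital $*$-homomorphism $\gamma:\mathcal D\otimes\mathcal D\to\mathcal D$ that comes with the definition of a strongly self-absorbing algebra. Concretely, I would define
\[
\alpha : W(A, B\otimes\mathcal D)\to W(A\otimes\mathcal D, B\otimes\mathcal D),\qquad [\phi]\mapsto [(\id_B\otimes\gamma)^{(\infty)}\circ(\phi\otimes\id_{\mathcal D})],
\]
and
\[
\beta : W(A\otimes\mathcal D, B\otimes\mathcal D)\to W(A,B\otimes\mathcal D),\qquad [\Phi]\mapsto [\Phi\circ(\id_A\otimes 1_{\mathcal D})].
\]
Both assignments send c.p.c.\ order zero maps to c.p.c.\ order zero maps (note that $\id_A\otimes 1_{\mathcal D}$ is a $*$-homomorphism and $(\id_B\otimes\gamma)^{(\infty)}$ is a $*$-homomorphism, so composing with them preserves the order zero property). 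Well-definedness on Cuntz classes follows from Proposition \ref{prop:compositions} and Lemma \ref{lem:tensorcse}, while preservation of the addition $+$ is a routine check using that $(\phi\hoplus\psi)\otimes\id_{\mathcal D}\sim(\phi\otimes\id_{\mathcal D})\hoplus(\psi\otimes\id_{\mathcal D})$ and that the $*$-homomorphism $(\id_B\otimes\gamma)^{(\infty)}$ commutes with $\hoplus$.

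Next I would verify $\beta\circ\alpha=\id$. Starting with $\phi:A\to M_\infty(B\otimes\mathcal D)$, one computes
\[
\beta(\alpha([\phi]))=[(\id_B\otimes\gamma)^{(\infty)}\circ(\phi\otimes 1_{\mathcal D})]=[((\id_B\otimes\gamma)\circ(\id_{B\otimes\mathcal D}\otimes 1_{\mathcal D}))^{(\infty)}\circ\phi].
\]
The defining property of a strongly self-absorbing algebra gives $\gamma\circ(\id_{\mathcal D}\otimes 1_{\mathcal D})\au\id_{\mathcal D}$, and so by Lemma \ref{lem:autensor} applied with $C=B$ and the tensor-factor identification, $(\id_B\otimes\gamma)\circ(\id_B\otimes\id_{\mathcal D}\otimes 1_{\mathcal D})\au\id_{B\otimes\mathcal D}$. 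Proposition \ref{prop:au} together with Proposition \ref{prop:compositions} then yields $\beta(\alpha([\phi]))=[\phi]$.

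For $\alpha\circ\beta=\id$, starting from $\Phi:A\otimes\mathcal D\to M_\infty(B\otimes\mathcal D)$, one gets
\[
\alpha(\beta([\Phi]))=[(\id_B\otimes\gamma)^{(\infty)}\circ(\Phi\otimes\id_{\mathcal D})\circ(\id_A\otimes 1_{\mathcal D}\otimes\id_{\mathcal D})].
\]
Rearranging the tensor factors, the composition $(\id_B\otimes\gamma)\circ(\id_{B\otimes\mathcal D}\otimes\id_{\mathcal D})\circ(\cdot\otimes 1_{\mathcal D}\otimes\id_{\mathcal D})$ acts on the internal $\mathcal D\otimes\mathcal D$ factor as $\gamma\circ(1_{\mathcal D}\otimes\id_{\mathcal D})$, which is approximately unitarily equivalent to $\id_{\mathcal D}$ (this follows from $\gamma\circ(\id_{\mathcal D}\otimes 1_{\mathcal D})\au\id_{\mathcal D}$ together with the approximate innerness of the flip on $\mathcal D\otimes\mathcal D$, i.e.\ \cite[Proposition 1.10]{tw}). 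Lemma \ref{lem:autensor} then promotes this to an approximate unitary equivalence $(\id_B\otimes\gamma)\circ(\id_B\otimes 1_{\mathcal D}\otimes\id_{\mathcal D})\au\id_{B\otimes\mathcal D}$ on the ambient codomain, and inserting this identity recovers $[\Phi]$ via Proposition \ref{prop:au} and Proposition \ref{prop:compositions}.

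The main obstacle is bookkeeping: one has to juggle several copies of $\mathcal D$ in the tensor products and be careful that the approximate unitary equivalences implemented by unitaries in multiplier algebras can be cut down by approximate units to genuine Cuntz-equivalence-witnessing sequences in $B\otimes\mathcal D$ (which is the content of Proposition \ref{prop:au}). Apart from this careful rearrangement of tensor factors, the proof is a clean consequence of the $\mathcal D$-absorption of $\gamma\circ(\id_{\mathcal D}\otimes 1_{\mathcal D})$ and the stability of $\au$ under tensor products established in Lemma \ref{lem:autensor}.
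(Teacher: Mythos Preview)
Your overall strategy is the same as the paper's, and in fact slightly more direct: the paper first reduces to showing $W(A,B\otimes\mathcal D)\cong W(A\otimes\mathcal D,B\otimes\mathcal D\otimes\mathcal D)$ and then builds the inverse pair, whereas you work with $W(A\otimes\mathcal D,B\otimes\mathcal D)$ itself. The verification of $\beta\circ\alpha=\id$ is fine.

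The gap is in your argument for $\alpha\circ\beta=\id$. You write that the composition ``acts on the internal $\mathcal D\otimes\mathcal D$ factor as $\gamma\circ(1_{\mathcal D}\otimes\id_{\mathcal D})$'' and then promote $(\id_B\otimes\gamma)\circ(\id_B\otimes 1_{\mathcal D}\otimes\id_{\mathcal D})\au\id_{B\otimes\mathcal D}$ to the codomain. But the $1_{\mathcal D}$ you insert sits in the \emph{domain} of $\Phi$, not in the codomain: on an elementary tensor $(\Phi\otimes\id_{\mathcal D})\circ(\id_A\otimes 1_{\mathcal D}\otimes\id_{\mathcal D})$ sends $a\otimes d$ to $\Phi(a\otimes 1_{\mathcal D})\otimes d$, and since $\Phi$ is an arbitrary c.p.c.\ order zero map (not a tensor product), there is no way to pull the $1_{\mathcal D}$ past $\Phi$ to the $B\otimes\mathcal D$ side. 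In particular, the composition does not factor through the map $(\id_B\otimes\gamma)\circ(\id_B\otimes 1_{\mathcal D}\otimes\id_{\mathcal D})$ postcomposed with anything reasonable.

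The fix --- and this is exactly the move the paper makes --- is to use the flip on the \emph{domain} first. Since $1_{\mathcal D}\otimes\id_{\mathcal D}\au\id_{\mathcal D}\otimes 1_{\mathcal D}$ as maps $\mathcal D\to\mathcal D\otimes\mathcal D$ (approximate innerness of the flip), Lemma~\ref{lem:autensor} and Proposition~\ref{prop:compositions} give
\[
(\Phi\otimes\id_{\mathcal D})\circ(\id_A\otimes 1_{\mathcal D}\otimes\id_{\mathcal D})\sim(\Phi\otimes\id_{\mathcal D})\circ(\id_A\otimes\id_{\mathcal D}\otimes 1_{\mathcal D})=\Phi\otimes 1_{\mathcal D}=(\id_{B\otimes\mathcal D}\otimes 1_{\mathcal D})^{(\infty)}\circ\Phi.
\]
Now the $1_{\mathcal D}$ genuinely lives on the codomain side, and composing with $(\id_B\otimes\gamma)^{(\infty)}$ yields $(\id_B\otimes\gamma\circ(\id_{\mathcal D}\otimes 1_{\mathcal D}))^{(\infty)}\circ\Phi\sim\Phi$ by the defining property of $\gamma$. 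With this correction your argument goes through.
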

\begin{proof} By the functoriality of $W(\_,\_)$, it follows that there is an isomorphism $W(A, B\otimes \mathcal D)\cong W(A,B\otimes \mathcal D\otimes \mathcal D)$ induced by the isomorphism between $\mathcal D$ and $\mathcal D\otimes \mathcal D$. Hence, it is enough to show that $W(A,B\otimes \mathcal D)$ is isomorphic to $W(A\otimes \mathcal D, B\otimes \mathcal D\otimes \mathcal D)$. To do so, let $\gamma$ be the unital $*$-homomorphism associated to $\mathcal{D}$. We claim that the maps\footnote{here $\id_K$ is used instead of the identity map on $M_\infty\subset K$.}
		$$\xymatrix@C=2em@R=0pt{%
			W(A,B\otimes \mathcal D)\ar[r] & W(A\otimes \mathcal D, B\otimes \mathcal D\otimes \mathcal D)\\
			[\phi]\ar@{|->}[r] & [\phi\otimes\id_{\mathcal D}]
		}$$
	and
		$$\xymatrix@C=2em@R=0pt{%
			W(A\otimes \mathcal D, B\otimes \mathcal D\otimes \mathcal D)\ar[r] & W(A,B\otimes \mathcal D)\\
			[\psi]\ar@{|->}[r] & [(\id_{B\otimes K}\otimes\gamma)\circ\psi\circ(\id_A\otimes1_{\mathcal D})]
		}$$
	are mutual inverses. Indeed, by a repeated use of Lemma \ref{lem:autensor}, we have
		\begin{align*}
			(\id_{B\otimes K}\otimes\gamma)\circ(\phi\otimes\id_{\mathcal D})\circ(\id_A\otimes 1_{\mathcal D})
				&=(\id_{B\otimes K}\otimes\gamma)\circ(\id_{B\otimes K}\otimes\id_{\mathcal D}\otimes1_{\mathcal D})\circ\phi\\
				&\au(\id_{B\otimes K}\otimes\id_{\mathcal D})\circ\phi\\
				&=\phi,
		\end{align*}
	and
		\begin{align*}
			((\id_{B\otimes K}\otimes\gamma)\circ\psi\circ(\id_A&\otimes 1_{\mathcal D}))\otimes\id_{\mathcal D} =\\
				&= (\id_{B\otimes K}\otimes\gamma\otimes\id_{\mathcal D})\circ(\psi\otimes\id_{\mathcal D})\circ(\id_A\otimes1_{\mathcal D}\otimes\id_{\mathcal D})\\
				&\sim(\id_{B\otimes K}\otimes\gamma\otimes\id_{\mathcal D})\circ(\psi\otimes\id_{\mathcal D})\circ(\id_A\otimes\id_{\mathcal D}\otimes1_{\mathcal D})\\
				&=(\id_{B\otimes K}\otimes\gamma\otimes\id_{\mathcal D})\circ(\psi\otimes1_{\mathcal D})\\
				&=(\id_{B\otimes K}\otimes\gamma\otimes\id_{\mathcal D})\circ(\id_{B\otimes K}\otimes\id_{\mathcal D}\otimes\id_{\mathcal D}\otimes 1_{\mathcal D})\circ\psi\\
				&\au(\id_{B\otimes K}\otimes\gamma\otimes\id_{\mathcal D})\circ(\id_{B\otimes K}\otimes\id_{\mathcal D}\otimes1_{\mathcal D}\otimes\id_{\mathcal D})\circ\psi\\
				&\au(\id_{B\otimes K}\otimes\id_{\mathcal D}\otimes\id_{\mathcal D})\circ\psi\\
				&=\psi.
		\end{align*}
	At the level of the Cuntz semigroups all the above equivalences reduce to
	equality by Proposition \ref{prop:au}.
\end{proof}
\begin{corollary} Let $A$, $B$ be \Cs-algebras and $\mathcal D$ be a strongly
self-absorbing C*-algebra. Then it follows,
		$$WW(A\otimes \mathcal D, B\otimes \mathcal D)\cong WW(A,B\otimes \mathcal
		D).$$
\end{corollary}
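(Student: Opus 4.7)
The plan is to derive this corollary by applying the preceding theorem after unwinding the definition of $WW(\_,\_)$ and shuffling tensor factors. Unpacking the definition, we have
	$$WW(A\otimes \mathcal D, B\otimes \mathcal D) = W((A\otimes \mathcal D)\otimes K, (B\otimes \mathcal D)\otimes K).$$
Since the minimal tensor product is commutative and associative up to natural isomorphism, and $W(\_,\_)$ is a bifunctor, we can identify the right-hand side with $W((A\otimes K)\otimes\mathcal D,(B\otimes K)\otimes\mathcal D)$.

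Next, I would invoke the theorem we just proved with $A$ replaced by the (local) \Cs-algebra $A\otimes K$ and $B$ replaced by $B\otimes K$; this gives
	$$W((A\otimes K)\otimes\mathcal D,(B\otimes K)\otimes\mathcal D)\cong W(A\otimes K,(B\otimes K)\otimes\mathcal D).$$
On the other hand, by definition and the same reshuffling of tensor factors,
	$$WW(A,B\otimes\mathcal D) = W(A\otimes K,(B\otimes\mathcal D)\otimes K)\cong W(A\otimes K,(B\otimes K)\otimes\mathcal D),$$
so concatenating these isomorphisms yields the stated identity.

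There is essentially no hard step here: the content is entirely carried by the theorem, and the corollary is a bookkeeping exercise in the definition $WW(A,B)=W(A\otimes K,B\otimes K)$ and the natural isomorphisms between iterated tensor products. The only mild care required is to check that the flip isomorphisms on the tensor factors are honest \Cs-algebra isomorphisms so that the induced maps on $W(\_,\_)$ are bijective semigroup homomorphisms, which is immediate from the functoriality established earlier.
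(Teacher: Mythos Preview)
Your argument is correct and is exactly the approach the paper intends: the corollary is stated without proof because it follows immediately from the preceding theorem applied with $A\otimes K$ and $B\otimes K$ in place of $A$ and $B$, together with the definition $WW(A,B)=W(A\otimes K,B\otimes K)$ and the obvious reshuffling of tensor factors. There is nothing to add.
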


As already mentioned, the above result can be used to explicitly compute some
bivariant Cuntz semigroups.

\begin{example} Let $U$ be any UHF algebra of infinite type. Then $W(U, U) \cong
WW(U)$. In particular, if $\mathcal Q$ is the universal UHF algebra, then
$W(U,\mathcal Q)\cong W(\mathcal Q)$.
\end{example}

\begin{example} Let $A$ be any separable \Cs-algebra, and let $\mathcal Z$ be
the Jiang-Su algebra. Then $W(\mathcal Z, A\otimes\mathcal Z) \cong
W(A\otimes\mathcal Z)$. In particular, if $A$ is $\mathcal Z$-stable, then
$W(\mathcal Z, A\otimes\mathcal Z)\cong W(A)$.
\end{example}

\begin{remark} Notice that the above examples also hold considering the functor
$WW(\_,\_)$ and the stable Cuntz semigroup respectively.
\end{remark}


\subsection{Cuntz Homology for Compact Hausdorff Spaces} In this section we give
an explicit computation of the bivariant Cuntz semigroup $WW(C(X), \CC)$, which
can be regarded as a first step towards an analogue of the K-homology for
compact Hausdorff spaces in the setting of the Cuntz theory. Throughout this
section, we let $X$ denote a compact and metrizable Hausdorff space, unless
otherwise stated.

Recall that if $\phi:A\to B$ is a c.p.c. order zero map between
\Cs-algebras, its kernel coincides with that of its support $*$-homomorphism
$\pi_\phi$. In particular, when $A=C(X)$, then $\ker\phi$ can be identified with
the closed subspace of $X$ on which every function in $\ker\phi$ vanishes.

In this setting we regard $\pi_{\phi} : C(X) \to B(H)={\mathcal M} (K(H))$ as a non degenerate representation and $h$ as a compact operator 
supported on a certain Hilbert space $H$. ($H$ is the subspace on which $h=\phi(1)$ is supported.) By the Spectral Theorem $H$ decomposes into a direct integral $H =\int_X^{\oplus} H_x d\mu (x)$.
If $H$ happens to be finite dimensional then $\pi_{\phi}$ will be unitarily equivalent to the evaluation representation on a finite sequence 
$\bigoplus_{i=1}^n ev_{x_i}$. Since the compact operator $h$ commutes with $\pi_{\phi}$ the finite dimensional eigenspaces of $h$ reduce 
$\pi_{\phi}$ and so $\pi_{\phi} $ is unitarily equivalent to $\bigoplus_{i \in \IN} ev_{x_i}$, where $(x_i)$ is a sequence in $X$ with finitely or infinitely many repetitions.

\begin{definition} 
The spectrum $\sigma(\phi)$ of a c.p.c. order zero map $\phi:
C(X) \to K$ is the closed subset of $X$ associated to the kernel of $\phi$, i.e.
	$$\sigma(\phi) := \{x\in X\ |\ f(x) = 0\ \forall f\in\ker\phi\}.$$
\end{definition}

It is convenient to separate the set of isolated points of the spectrum from the
set of accumulation points. The former will be denoted by $\sigma_i(\phi)$,
while the latter is defined as
$\sigma_\ess(\phi):=\sigma(\phi)\smallsetminus\sigma_i(\phi)$. Our notation
follows the usual definition of the essential spectrum of a normal operator,
with the only exception that here we do not include isolated points with
infinite multiplicity in it. If $x$ is an isolated point from a subset $C$ of
$X$, then there exists a neighborhood $U$ of $x$ that does not contain other
points of $C$. By Urysohn's lemma, one finds a continuous function
$\tilde\chi_{\{x\}}\in C(X)$ that vanishes on the outside of $U$ and such that
$\tilde\chi_{\{x\}}(x)=1$. We use this fact to provide continuous indicator
functions $\tilde\chi$ for isolated points of subsets in the relative topology.

We now define the notion of multiplicity functions for a c.p.c order zero map,
and relate them to the semigroup $WW(C(X),\CC)$.

\begin{definition}[Multiplicity function] Let $\phi:C(X)\to K$ be a c.p.c. order
zero map. The multiplicity function $\nu_\phi$ of $\phi$ is the map from $X$ to
$\tilde\IN_0:=\IN_0\cup\{\infty\}$ given by
	$$\nu_\phi(x) = \begin{cases}
		0 & x\not\in\sigma(\phi)\\
		\infty & x\in\sigma_\ess(\phi)\\
		\rk \pi_\phi(\tilde\chi_{\{x\}}) & x\in\sigma_i(\phi),
	\end{cases}$$
where $\rk$ denotes the rank of an element in $K$.
\end{definition}

Let $\phi:C(X)\to K$ be a c.p.c. order zero map with decomposition $\phi = \pi_{\phi} h$ according to \sth.
As already pointed out, up to unitary equivalence, $\pi_{\phi}$ can be taken of the form
	$$\pi_\phi(f)=\bigoplus_{n\in\IN}f(x_n),$$
where  $\seq
x\subset\sigma(\phi)$ is a dense sequence  with possible repetitions.
For isolated points the multiplicity function $\nu_\phi$ associated to $\phi$ gives the number of
occurrence of every $x_n$ in the sequence, whereas an
accumulation point of $\sigma (\phi)$ has, by definition, infinite multiplicity.
Up to unitary equivalence, one can split
$\pi_\phi$ into
	\begin{equation}\label{eq:homdec}
		\pi_\phi = \pi_{\phi,i}\hoplus\pi_{\phi,\ess},
	\end{equation}
where
	$$\pi_{\phi,i}(f) := \bigoplus_{x\in\sigma_i(\phi)}f(x)\Id_{\nu_\phi(x)}$$
and
	$$\pi_{\phi,\ess}(f) := \bigoplus_{x\in\sigma_\ess(\phi)}f(x)\Id_{n_x},$$
with $n_x\in\IN_0 \cup \{\infty\}$ denoting the number of occurrences of $x$ in $\seq x$ and
$\Id_m\in M_m(\CC)$ the identity matrix for any $m\in\IN$, with $\Id_0 = 0$ by
definition. $\Id_{\infty}$ is thought of as  the identity on an infinite dimensional subspace.
To this decomposition of representations corresponds a decomposition of the associated order zero map of the analogous form
\begin{equation}\label{eq:cpcdec}
	\phi = \phi_i \hoplus \phi_{\ess},
\end{equation}
with obvious meaning of the symbols.

\begin{lemma}\label{lem:sigmai} Let $\phi,\psi:C(X)\to K$ be c.p.c. order zero
maps such that $\sigma_\ess(\phi) = \sigma_\ess(\psi)=\varnothing$. Then
$\phi\precsim\psi$ if and only if $\nu_\phi(x)\leq\nu_\psi(x)$ for all $x\in X$.
\end{lemma}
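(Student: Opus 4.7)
The plan is to exploit the hypothesis $\sigma_\ess(\phi) = \sigma_\ess(\psi) = \varnothing$, which forces both $\sigma(\phi)$ and $\sigma(\psi)$ to be discrete closed subsets of the compact space $X$, hence finite. Following the decomposition sketched just before the statement, one writes $\phi = \hoplus_{x \in \sigma(\phi)} \phi_x$ along mutually orthogonal subspaces $H_x^\phi \subset H$ on which $\phi_x$ acts by $f \mapsto f(x) h_{\phi, x}$, where $h_{\phi, x} \in K(H_x^\phi)$ is strictly positive (by the tacit requirement that the supports of $h$ and $\pi_\phi$ coincide) and $\dim H_x^\phi = \nu_\phi(x)$; analogously for $\psi$.

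For the forward implication, assume $\phi \precsim \psi$ and fix $x \in \sigma(\phi)$. Since $\sigma(\phi) \cup \sigma(\psi)$ is finite, Urysohn's lemma provides a continuous bump $\tilde\chi_{\{x\}} \in C(X)^+$ with $\tilde\chi_{\{x\}}(x) = 1$ whose support meets $\sigma(\phi) \cup \sigma(\psi)$ only at $\{x\}$. Then $\phi(\tilde\chi_{\{x\}}) = h_{\phi, x}$ has rank $\nu_\phi(x)$, and $\psi(\tilde\chi_{\{x\}}) = h_{\psi, x}$ (interpreted as $0$ if $x \notin \sigma(\psi)$) has rank $\nu_\psi(x)$. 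Evaluating $\phi \precsim \psi$ at $\tilde\chi_{\{x\}}$ yields $\phi(\tilde\chi_{\{x\}}) \precsim \psi(\tilde\chi_{\{x\}})$ inside $W(K) \cong \IN_0 \cup \{\infty\}$, where Cuntz order is rank comparison, giving $\nu_\phi(x) \leq \nu_\psi(x)$; the case $x \notin \sigma(\psi)$ is ruled out since it would force $\phi(\tilde\chi_{\{x\}}) = 0$, contradicting $x \in \sigma(\phi)$.

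For the converse, assume $\nu_\phi(x) \leq \nu_\psi(x)$ for every $x$, so $\sigma(\phi) \subseteq \sigma(\psi)$. For each $x \in \sigma(\phi)$ fix an isometry $V_x : H_x^\phi \hookrightarrow H_x^\psi$; the compression $V_x^* h_{\psi, x} V_x$ is a strictly positive compact operator on $H_x^\phi$ (strict positivity following from that of $h_{\psi, x}$ together with $V_x$ being isometric), and therefore shares with $h_{\phi, x}$ the full rank $\dim H_x^\phi$ inside $K(H_x^\phi)$ and is Cuntz equivalent to it there. Consequently there exists a sequence $W_{x, n} \in K(H_x^\phi)$ with $W_{x, n}^* V_x^* h_{\psi, x} V_x W_{x, n} \to h_{\phi, x}$ in norm (exact equality being attainable in finite dimensions via $W_x = (V_x^* h_{\psi, x} V_x)^{-1/2} h_{\phi, x}^{1/2}$). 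Setting $b_{x, n} := V_x W_{x, n}$ and $b_n := \sum_{x \in \sigma(\phi)} b_{x, n}$, the mutual orthogonality of the domains $H_x^\phi$ and of the codomains $V_x(H_x^\phi) \subset H_x^\psi$ kills all cross terms in $b_n^* \psi(f) b_n$, yielding
\begin{equation*}
b_n^* \psi(f) b_n = \sum_{x \in \sigma(\phi)} f(x)\, b_{x, n}^* h_{\psi, x} b_{x, n} \longrightarrow \sum_{x \in \sigma(\phi)} f(x)\, h_{\phi, x} = \phi(f),
\end{equation*}
with convergence uniform on finite subsets of $C(X)$, so that $\phi \precsim \psi$ by Proposition \ref{prop:subeq}. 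The main obstacle is the treatment of points of infinite multiplicity, where exact equality is unattainable and one must settle for a Cuntz-equivalence argument between full-support positive compact operators on $H_x^\phi$; the aggregation step is then benign precisely because the essential-spectrum hypothesis renders $\sigma(\phi)$ finite, so only finitely many such approximations need be combined.
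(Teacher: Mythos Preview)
Your proof is correct and follows essentially the same strategy as the paper's: a rank obstruction for the forward direction, and an isometry-plus-rescaling construction for the converse. Your forward implication makes the paper's one-line ``rank argument'' explicit by evaluating the relation $\phi\precsim\psi$ on the bump function $\tilde\chi_{\{x\}}$ and reading off the rank comparison in $W(K)\cong\IN_0\cup\{\infty\}$. For the converse, the paper combines the intertwining isometry with eigenvalue rescaling and then cuts down by an approximate unit of $K$ to land in $K$; you instead arrange compactness directly by taking $W_{x,n}\in K(H_x^\phi)$ and using that $K$ is an ideal, which is a cleaner bookkeeping choice but not a different idea.
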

\begin{proof} If $\phi\precsim\psi$ and $\nu_\phi > \nu_\psi$, then there exists
at least one point $x\in\sigma_i(\phi)$ that either is not in $\sigma_i(\psi)$
or appears with a higher multiplicity than in the decomposition of $\psi_i$. In
both cases it is immediate to conclude that there cannot be a sequence that
witnesses $\phi\precsim\psi$ by a rank argument. Conversely, if
$\nu_\phi\leq\nu_\psi$ then there clearly is an isometry that conjugates $\pi_\phi$
inside $\pi_\psi$. To witness $\phi\precsim\psi$ it is enough to rescale every
basis vector by the appropriate factors coming from the eigenvalues of $h_\phi$
and $h_\psi$ and combine this with the above isometry, together with an approximate
unit for $K\subset B(H)$.
\end{proof}

The following result shows that the multiplicity of every point in the essential
spectrum of a c.p.c. order zero map $\phi:C(X)\to K$ is irrelevant since by
accumulation and the continuity of the functions in $C(X)$ they can be replaced
by nearby points in $\sigma_\ess(\phi)$.

\begin{lemma}\label{lem:presigmaess} Let $\phi:C(X)\to K$ be a non-zero c.p.c. order zero map with
$\sigma_i(\phi)=\varnothing$ and let $\seq y$ be a dense sequence of
$\sigma_\ess(\phi)$. Then $\pi_\phi$ is approximately unitarily equivalent to
$\rho := \bigoplus_{i\in\IN}\ev_{y_i}$.
\end{lemma}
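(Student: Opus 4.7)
The plan is to deduce the claim from Voiculescu's approximate unitary equivalence theorem for representations of a separable \Cs-algebra. Recall the relevant statement: two non-degenerate representations $\pi_1,\pi_2:A\to B(H)$ on a separable infinite-dimensional Hilbert space are approximately unitarily equivalent if and only if $\ker\pi_1=\ker\pi_2$ and $\pi_1^{-1}(K)=\pi_2^{-1}(K)$.

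The key observation is that both $\pi_\phi$ and $\rho$ are, up to unitary equivalence, of the diagonal form $\bigoplus_n \ev_{z_n}$ with $(z_n)$ a dense sequence in the perfect set $\sigma_\ess(\phi)$: for $\pi_\phi$ this follows from the spectral-theoretic representation recalled just before the lemma together with the hypothesis $\sigma_i(\phi)=\varnothing$, while for $\rho$ it is built into the definition. Since $\phi$ is non-zero and $\sigma_\ess(\phi)$ contains no isolated points, both sequences must be infinite, so the ambient Hilbert spaces are separable and infinite-dimensional, as required.

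The first check I would carry out is that $\ker\pi_\phi=\ker\rho$: both coincide with the ideal of continuous functions vanishing on $\sigma_\ess(\phi)$, using density of $\seq y$ in the latter. The second, less obvious, check is that $\pi_\phi^{-1}(K)=\ker\pi_\phi$, and similarly for $\rho$. Writing $\pi_\phi(f)=\diag(f(x_n))$, one has $\pi_\phi(f)\in K$ if and only if $f(x_n)\to 0$. If $f\notin\ker\pi_\phi$, there exists $z\in\sigma_\ess(\phi)$ with $f(z)\neq 0$; since $z$ is an accumulation point of $\sigma_\ess(\phi)$ and $\seq x$ is dense in this set, continuity of $f$ forces $|f(x_n)|\geq|f(z)|/2$ for infinitely many $n$, contradicting $\pi_\phi(f)\in K$. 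The same argument applies verbatim to $\rho$ with $\seq y$ in place of $\seq x$.

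With both hypotheses of Voiculescu's theorem satisfied, I would conclude $\pi_\phi\au\rho$. The main obstacle is bookkeeping: translating the spectral-theoretic description of $\pi_\phi$ given above the lemma into the clean diagonal form $\bigoplus_n\ev_{x_n}$ for a dense sequence in $\sigma_\ess(\phi)$, and invoking the appropriate version of Voiculescu's theorem. The perfectness of $\sigma_\ess(\phi)$ — guaranteed by $\sigma_i(\phi)=\varnothing$ — is what does the real work behind both checks, as it is this property that prevents either representation from producing non-zero compact operators outside its kernel.
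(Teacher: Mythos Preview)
Your proposal is correct and reaches the conclusion by a genuinely different route from the paper. One small caveat: the version of Voiculescu's theorem you state is not quite right in general---one also needs the ranks of $\pi_1(a)$ and $\pi_2(a)$ to agree whenever these operators are compact. This does not affect your application, since you verify $\pi_\phi^{-1}(K)=\ker\pi_\phi$ (and similarly for $\rho$), so the only compacts in the image are zero and the rank condition is vacuous; the relevant consequence of Voiculescu---that two representations with the same kernel and whose images meet $K$ only in $\{0\}$ are approximately unitarily equivalent---applies directly.

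The paper's proof avoids Voiculescu entirely and is completely elementary: it constructs the approximating unitaries by hand. For each $\eps>0$ one partitions $\sigma_\ess(\phi)$ into finitely many pieces of diameter less than $\eps$, notes that each nonempty piece is hit infinitely often by both $\seq x$ and $\seq y$ (density plus perfectness), and patches together a bijection $\sigma_\eps:\IN\to\IN$ with $d(x_n,y_{\sigma_\eps(n)})<\eps$ for all $n$. The associated permutation unitary $u_\eps$ then satisfies $\norm{u_\eps\pi_\phi(f)u_\eps^*-\rho(f)}\to 0$ by uniform continuity of $f$. Your argument is quicker if one is willing to invoke Voiculescu as a black box, and it makes transparent \emph{why} the equivalence holds (no nonzero compacts in the image); the paper's argument is self-contained and exhibits the intertwining unitaries explicitly. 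Both ultimately rest on the same structural fact---perfectness of $\sigma_\ess(\phi)$---but exploit it in different ways: you use it to rule out nonzero compacts in the image, while the paper uses it to guarantee that every small region receives infinitely many terms of each sequence.
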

\begin{proof} Fix a compatible metric $d$ on $\sigma_{\ess}(\phi)$ and $\eps >0$. 
By compactness we can find a bijection $\sigma = \sigma_{\eps} : \IN \to \IN$ such that 
$d(x_n , y_{\sigma(n)} ) < \eps$ for all $n$. Indeed, there exists a partition $C_1, \ldots , C_{N_{\eps}}$
of $X$ such that $\textup{diam}(C_i) < \eps$ and both $\{ x_n \} \cap C_i$ and 
$\{ y_n \} \cap C_i$ are infinite for all $i$. Choose any bijection between $P_i = \{ n \mid x_n \in C_i\}$ 
and $Q_i = \{ n \mid y_n \in C_i \}$; they piece together to the required bijection $\sigma$.
$\sigma_{\eps} $ defines a unitary $u_{\eps}$ such that  
$$
\| u_{\eps} \pi_{\phi} (f) u_{\eps}^* - \rho (f) \| \to 0 
$$
as $\eps \to 0$ for all $f \in C(X)$, so that $u_{1/n}$ gives the desired approximate unitary equivalence.
\end{proof}

\begin{lemma}\label{lem:sigmaess} Let $\phi,\psi:C(X)\to K$ be c.p.c. order zero
maps such that $\sigma_i(\phi) = \sigma_i(\psi)=\varnothing$. Then
$\phi\precsim\psi$ if and only if $\nu_\phi\leq\nu_\psi$.
\end{lemma}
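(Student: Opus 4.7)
The plan is to address the two implications separately, leveraging the strong restriction $\sigma_i(\phi)=\sigma_i(\psi)=\varnothing$: in this regime the multiplicity function $\nu$ only takes values in $\{0,\infty\}$, so $\nu_\phi\leq\nu_\psi$ becomes the inclusion $\sigma(\phi)\subseteq\sigma(\psi)$ of closed subsets of $X$, and the lemma reduces to proving $\phi\precsim\psi$ iff $\sigma(\phi)\subseteq\sigma(\psi)$.

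The forward implication is straightforward. If $\phi\precsim\psi$ is witnessed by a sequence $\seq b\subset K$, then for every $f\in\ker\psi$ each term $b_n^*\psi(f)b_n$ vanishes, and passing to the limit forces $\phi(f)=0$. This gives $\ker\psi\subseteq\ker\phi$, which, via the Gelfand correspondence on $C(X)$, is exactly the inclusion $\sigma(\phi)\subseteq\sigma(\psi)$.

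For the converse, I would exploit the joint spectral decomposition of the commuting pair $(h_\phi,\pi_\phi)$ (and similarly for $\psi$) described in the paragraph preceding Lemma \ref{lem:presigmaess}, writing
$$\phi(f)=\sum_m\lambda_m f(x_m) P_m\qquad\text{and}\qquad\psi(f)=\sum_j\mu_j f(y_j) Q_j,$$
with $\{P_m\},\{Q_j\}$ pairwise orthogonal rank-one projections, strictly positive weights tending to $0$, and sequences $\{x_m\},\{y_j\}$ dense in $\sigma(\phi),\sigma(\psi)$ respectively. Given $F\Subset C(X)$ and $\eps>0$, the first step is to choose $\delta>0$ by uniform continuity on the compact metrizable space $X$ so that $|f(x)-f(y)|<\eps$ for $f\in F$ whenever $d(x,y)<\delta$, and then to choose $N\in\IN$ so that $\lambda_m\leq\eps$ for $m>N$. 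Using the absence of isolated points in $\sigma(\psi)$, I would then inductively match each $x_m$ ($m\leq N$) to a nearby $y_{j(m)}$ with pairwise distinct indices. The witness of the Cuntz subequivalence is the finite-rank element
$$b:=\sum_{m=1}^N\sqrt{\lambda_m/\mu_{j(m)}}\,v_m,$$
where $v_m$ is the rank-one partial isometry from the $P_m$-line to the $Q_{j(m)}$-line determined by a fixed choice of basis vectors. Orthogonality of the $\{Q_j\}$ will yield $b^*\psi(f)b=\sum_{m\leq N}\lambda_m f(y_{j(m)})P_m$, and splitting the error into a head (controlled by $\delta$) and a tail (controlled by $N$) gives $\|b^*\psi(f)b-\phi(f)\|\leq C\eps$ for $f\in F$, with $C$ depending only on $\|h_\phi\|$ and $\max_{f\in F}\|f\|_\infty$. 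An appeal to Proposition \ref{prop:subeq} then concludes $\phi\precsim\psi$.

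The main obstacle is the matching step: one must guarantee that infinitely many $y_j$'s lie in every neighborhood of any given $x_m\in\sigma(\phi)\subseteq\sigma(\psi)$, and this is precisely where the hypothesis $\sigma_i(\psi)=\varnothing$ enters in an essential way, since it forces every point of $\sigma(\psi)$ to be an accumulation point of the dense sequence $\{y_j\}$ and thus allows the indices $j(1),\ldots,j(N)$ to be chosen pairwise distinct. Positivity of the scaling factors $\sqrt{\lambda_m/\mu_{j(m)}}$ is automatic once one normalizes the decompositions to drop vanishing weights, and finite rank of $b$ immediately places it in $K$.
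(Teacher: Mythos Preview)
Your proof is correct. The forward implication via the kernel inclusion $\ker\psi\subseteq\ker\phi$ is in fact cleaner than the paper's contrapositive separation argument, and for the converse your explicit finite-rank witness, built from a $\delta$-matching of finitely many evaluation points, is sound; the key point---that every neighbourhood of $x_m\in\sigma(\phi)\subseteq\sigma(\psi)$ contains infinitely many $y_j$ because $\sigma(\psi)$ is perfect and $\{y_j\}$ is dense---is exactly what you isolate.

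The route differs from the paper's in the converse. The paper invokes Lemma~\ref{lem:presigmaess} to produce a projection $E$ with $E\pi_\psi E$ approximately unitarily equivalent to $\pi_\phi$, and then rescales by the eigenvalues of $h_\phi,h_\psi$; you bypass that lemma entirely by working directly at the level of the joint spectral decomposition and building the intertwiner by hand for each finite set $F$ and tolerance $\eps$. Your approach is more elementary and self-contained, and it makes the role of the hypothesis $\sigma_i(\psi)=\varnothing$ completely transparent in the matching step. The paper's approach is more modular---it separates ``matching the support $*$-homomorphisms up to a.u.e.'' from ``adjusting the weights''---which is why it reuses Lemma~\ref{lem:presigmaess} and feeds more naturally into the subsequent Theorem~\ref{thm:multiplicity}; but it glosses over the selection of a subsequence of $\{y_j\}$ dense in $\sigma(\phi)$, a detail your argument handles explicitly.
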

\begin{proof} Assume that $\nu_\phi > \nu_\psi$ to conclude that there exists a
point $x\in\sigma_\ess(\phi)$ which is not in $\sigma_\ess(\psi)$. Since the
space $X$ is Hausdorff, there exists a neighborhood $U$ of $x$ which has empty
intersection with $\sigma_\ess(\psi)$. Now, by a similar argument as in Lemma \ref{lem:sigmai},
one can conclude that there cannot be a sequence that witnesses
$\phi\precsim\psi$. Conversely, if $\nu_\phi\leq\nu_\psi$ then there exists a
projection $E$ such that $E \pi_\psi E$ is approximately unitarily equivalent to
$\pi_\phi$, as a consequence of the previous lemma. It is then enough to rescale
every basis vector by the appropriate factors coming from the eigenvalues of
$h_\phi$ and $h_\psi$ and combine this with the unitary, together with an
approximate unit for $K\subset B(H)$ to witness $\phi\precsim\psi$.
\end{proof}

If $\seq x\subset\sigma_\ess(\phi)$ is a dense sequence in the essential
spectrum of a c.p.c. order zero map $\phi$, then $\{x_1,x_1,x_2,x_2,\ldots\}$ is
also dense and therefore $\phi\cong\phi_\ess\hoplus\phi$. Furthermore, the proof
of the above lemma, which relies on Lemma \ref{lem:presigmaess}, easily adapts
to the case of c.p.c. order zero maps $\phi$ and $\psi$ for which
$\sigma_i(\psi)=\sigma_\ess(\phi) = \varnothing$ and
$\sigma_i(\phi)\subset\sigma_\ess(\psi)$, at the price of having a sequence of
partial isometries rather than unitaries in general.

\begin{theorem}\label{thm:multiplicity} Let $\phi,\psi:C(X)\to K$ be c.p.c.
order zero maps. Then $\phi\precsim\psi$ if and only if $\nu_\phi\leq\nu_\psi$.
\end{theorem}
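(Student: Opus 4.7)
The plan is to decompose both maps via (\ref{eq:cpcdec}) as $\phi=\phi_i\hoplus\phi_\ess$ and $\psi=\psi_i\hoplus\psi_\ess$, and then reduce to Lemmas \ref{lem:sigmai} and \ref{lem:sigmaess} together with the remark that follows Lemma \ref{lem:sigmaess}. First I would record the preliminary observation that the hypothesis $\nu_\phi\leq\nu_\psi$ forces $\sigma(\phi)\subseteq\sigma(\psi)$ and, since a point of infinite $\nu_\phi$-value must also have infinite $\nu_\psi$-value, $\sigma_\ess(\phi)\subseteq\sigma_\ess(\psi)$.

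For the ``if'' direction, I would partition the isolated spectrum of $\phi$ as $\sigma_i(\phi)=S_1\sqcup S_2$, where $S_1:=\sigma_i(\phi)\cap\sigma_i(\psi)$ and $S_2:=\sigma_i(\phi)\cap\sigma_\ess(\psi)$; this induces a corresponding splitting $\phi_i=\phi_{i,1}\hoplus\phi_{i,2}$. Then $\phi_{i,1}\precsim\psi_i$ by Lemma \ref{lem:sigmai}, while $\phi_{i,2}\precsim\psi_\ess$ falls under the remark following Lemma \ref{lem:sigmaess} (since $\sigma_\ess(\phi_{i,2})=\varnothing=\sigma_i(\psi_\ess)$ and $\sigma_i(\phi_{i,2})=S_2\subseteq\sigma_\ess(\psi_\ess)$), and $\phi_\ess\precsim\psi_\ess$ is provided by Lemma \ref{lem:sigmaess}. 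Adding these three subequivalences gives $\phi\precsim\psi_i\hoplus\psi_\ess\hoplus\psi_\ess\sim\psi_i\hoplus\psi_\ess=\psi$, where the middle equivalence is the duplication identity $\psi_\ess\hoplus\psi_\ess\sim\psi_\ess$ recorded just before the theorem.

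For the ``only if'' direction, I would argue by contradiction: suppose $\nu_\phi(x)>\nu_\psi(x)$ for some $x\in X$. Then $\nu_\psi(x)<\infty$, so $x$ is not an accumulation point of $\sigma(\psi)$, and one can choose a continuous $0\leq f\leq 1$ with $f(x)=1$ supported in an open neighborhood $U$ of $x$ such that $U\cap\sigma(\psi)\subseteq\{x\}$, and also $U\cap\sigma(\phi)=\{x\}$ whenever $x$ is isolated in $\sigma(\phi)$. A direct inspection using the decomposition of $\pi_\psi$ shows $\rk\psi(f)\leq\nu_\psi(x)$, while the corresponding analysis of $\phi(f)=h_\phi\pi_\phi(f)$ yields rank exactly $\nu_\phi(x)$ in the isolated case and infinite rank in the essential case. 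Hence $\rk(\phi(f)-\eps)_+>\nu_\psi(x)$ for sufficiently small $\eps>0$, which contradicts $\phi\precsim\psi$ by the standard rank-domination property of the sequence $b_n^*\psi(f)b_n$ converging in norm to $\phi(f)$.

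The main obstacle is the rank computation for $\phi(f)$ in the case $x\in\sigma_\ess(\phi)$: one needs that $\phi(f)$, though compact, has infinite rank. This requires combining both halves of the structure theorem, namely that $f$ supported near an accumulation point of $\sigma(\phi)$ produces an infinite-rank spectral projection in $\pi_\phi$, and that the requirement (from Corollary \ref{cor:structure}) that the support projections of $h_\phi$ and $\pi_\phi$ agree forces $h_\phi$ to be injective on the range of that spectral projection.
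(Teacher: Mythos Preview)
Your proof is correct and follows essentially the same route as the paper. For the ``if'' direction your splitting $\phi_i=\phi_{i,1}\hoplus\phi_{i,2}$ is exactly the paper's decomposition $\phi_i=\phi_{i,i}\hoplus\phi_{i,\ess}$ (with $\sigma(\phi_{i,i})\subset\sigma_i(\psi)$ and $\sigma(\phi_{i,\ess})\subset\sigma_\ess(\psi)$), and both arguments then compare the three summands of $\phi$ against $\psi_i\hoplus\psi_\ess\hoplus\psi_\ess\sim\psi$ via Lemmas \ref{lem:sigmai}, \ref{lem:sigmaess}, and the remark between them.

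The one place where you differ is the ``only if'' direction: the paper simply appeals to the forward implications already contained in those two lemmas, whereas you give a self-contained rank argument at a single bad point $x$ using a bump function $f$. Your version is more explicit and arguably cleaner, since the paper's reduction does not immediately explain why $\phi\precsim\psi$ forces the componentwise subequivalences; your direct argument sidesteps that issue entirely. The obstacle you flag---that $h_\phi$ must be injective on the range of the relevant spectral projection of $\pi_\phi$---is handled exactly as you indicate, by the support-projection convention in the structure theorem (the Remark after Proposition \ref{prop:cpclocal}).
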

\begin{proof} With the above remarks in mind, and the decomposition of Equation \eqref{eq:cpcdec}, it is enough to decompose $\phi$ and $\psi$ in
	$$\phi = \phi_i\hoplus\phi_\ess\qquad\text{and}\qquad\psi = \psi_i\oplus\hoplus\psi_\ess.$$
Now $\phi_i$ can be decomposed further according to $\sigma(\psi)$ into
	$$\phi_i = \phi_{i,\ess}\hoplus\phi_{i,i},$$
where $\sigma(\phi_{i,\ess})\subset\sigma_\ess(\psi)$ and $\sigma(\phi_{i,i})\subset \sigma_i(\psi)$. By replacing $\psi$ with the equivalent map $\psi_\ess\hoplus\psi$ we then have the decompositions
	$$\phi = \phi_\ess\hoplus\phi_{i,\ess}\hoplus\phi_{i,i}\qquad\text{and}\qquad\psi = \psi_\ess\hoplus\psi_\ess\hoplus\psi_i.$$
It is now enough to apply Lemma
\ref{lem:sigmai} to $\phi_{i,i}$ and $\psi_i$, and Lemma \ref{lem:sigmaess} to the remaining direct summands, to get to the sought conclusion.
\end{proof}

Observe that any function $\nu$ from $X$ to $\mathbb{N}_0\cup\{\infty\}$ with compact support
can be split into the sum of two functions, $\nu_i$ and $\nu_\ess$, with
disjoint closed support, such that the former is supported by the isolated
points of $\supp\nu$ and $\nu_\ess$ on the rest of $\supp\nu$.

\begin{definition} Let $\tilde\IN_0:=\IN_0\cup\{\infty\}$. A Multiplicity function on $X$ is a function
$\nu : X \to \tilde\IN_0 $ whose support is closed and $\nu$ takes the value  $\infty$ on 
the accumulation points of its support.
We denote the set of multiplicity functions on $X$ by $\Mf(X)$.
\end{definition}

Observe that $\Mf(X)$ has a natural structure of partially ordered Abelian
monoid when equipped with the point-wise operation of addition, and partial order
inherited from $\tilde\IN_0$. In particular, every multiplicity function
$\nu_\phi$ of a c.p.c. order zero map $\phi:C(X)\to K$ is an element of
$\Mf(X)$. By Theorem \ref{thm:multiplicity}, every function in $\Mf(X)$ is
associated to a representation of $C(X)$ onto $K$ and hence to a c.p.c. order
zero map, which is unique up to Cuntz equivalence. If we insist on calling
$WW(C(X),\CC)$ the Cuntz homology of $X$, then we have the following

\begin{corollary} The Cuntz homology of $X$ is order isomorphic to the partially
ordered Abelian monoid $\Mf(X)$.
\end{corollary}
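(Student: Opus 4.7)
The plan is to define a map $\Phi:WW(C(X),\CC)\to\Mf(X)$ by $\Phi([\phi]):=\nu_\phi$ and prove that it is an additive, order-preserving bijection. First I would verify $\nu_\phi\in\Mf(X)$ for every c.p.c.\ order zero map $\phi:C(X)\to K$: its support coincides with $\sigma(\phi)$, which is closed by definition, and every accumulation point of $\supp\nu_\phi=\sigma(\phi)$ lies in $\sigma_\ess(\phi)$, on which $\nu_\phi$ takes the value $\infty$ by definition. Preservation of the zero element is clear since $\nu_0=0$.

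Well-definedness, injectivity, and the fact that $\Phi$ is an order embedding then all follow at once from Theorem \ref{thm:multiplicity}, which states $\phi\precsim\psi$ if and only if $\nu_\phi\leq\nu_\psi$; in particular $[\phi]=[\psi]$ if and only if $\nu_\phi=\nu_\psi$. Additivity $\Phi([\phi]+[\psi])=\nu_\phi+\nu_\psi$ I would prove by direct inspection: $\sigma(\phi\hoplus\psi)=\sigma(\phi)\cup\sigma(\psi)$, and at each isolated point $x$ of this union the decomposition $\pi_{\phi\hoplus\psi}(\tilde\chi_{\{x\}})=\pi_\phi(\tilde\chi_{\{x\}})\oplus\pi_\psi(\tilde\chi_{\{x\}})$ makes ranks add, while at accumulation points both sides equal $\infty$.

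The main nontrivial step is surjectivity, which I would establish by an explicit construction. Given $\nu\in\Mf(X)$, write $S:=\supp\nu=S_i\sqcup S_\ess$, where $S_i$ consists of isolated points of $S$ and $S_\ess$ of the accumulation points. Using compactness and metrizability of $X$, pick a countable dense subset $D\subset S_\ess$, and enumerate a sequence $(x_n)_{n\in\IN}$ in $S$ that lists every $x\in S_i$ exactly $\nu(x)$ times (allowing $\infty$) and contains every $d\in D$ infinitely often. Set $H:=\ell^2(\IN)$, $\pi:=\bigoplus_n \ev_{x_n}:C(X)\to B(H)$, choose strictly positive weights $\lambda_n\to 0$, and define $h:=\diag(\lambda_n)\in K(H)$. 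Then $h$ commutes with $\pi$ and $\phi:=h\pi$ is a c.p.c.\ order zero map into $K$, with $\sigma(\phi)=\overline{\{x_n\}}=S$; by the choice of multiplicities, $\nu_\phi(x)=\rk\pi(\tilde\chi_{\{x\}})=\nu(x)$ at each $x\in S_i$, and $\nu_\phi(x)=\infty=\nu(x)$ at each $x\in S_\ess$.

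The main obstacle is ensuring the enumeration $(x_n)$ has closure exactly $S$: the sequence must be dense in $S_\ess$ yet must not accumulate at points outside $S$. Metrizability of $S$ (as a closed subset of a metrisable compactum) delivers the countable dense subset $D$ needed for the former, and the fact that $S$ itself is closed takes care of the latter. Combined with $\lambda_n\to 0$ to secure $h\in K$ and hence $\phi(C(X))\subset K$, this makes the construction go through. The remaining verifications, including that the inverse $\nu\mapsto[\phi]$ constructed above is well-defined independently of the chosen enumeration (again by Theorem \ref{thm:multiplicity}), are routine, and the four steps together yield the claimed order isomorphism.
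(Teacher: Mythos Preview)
Your proof is correct and follows essentially the same approach as the paper: the paper states the corollary without a separate proof, relying on Theorem~\ref{thm:multiplicity} for well-definedness, injectivity and the order embedding, and on the one-sentence remark preceding the corollary (``every function in $\Mf(X)$ is associated to a representation of $C(X)$ onto $K$ and hence to a c.p.c.\ order zero map'') for surjectivity. You have simply made these steps explicit, including the construction of a representing c.p.c.\ order zero map for an arbitrary $\nu\in\Mf(X)$ and the verification of additivity, neither of which the paper spells out.
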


For the bivariant Cuntz semigroup $W(C(X),\CC)$ one sees that the only
representations of $C(X)$ involved are finite dimensional, for the positive
element $h_\phi$ of a c.p.c. order zero map $\phi:C(X)\to M_\infty$ is actually
a finite positive matrix in $M_n$ for some $n\in\IN$. Therefore, if we denote by
$\Mf_i(X)$ the subsemigroup of $\Mf(X)$ given by
	$$\Mf_i(X) := \{\nu\in \IN_0^X\ |\ |\supp\nu|<\infty\},$$
i.e. all the finitely supported multiplicity functions over $X$ with values in $\IN_0$, then $W(C(X),\CC) \cong \Mf_i(X)$ as partially ordered Abelian monoids. As the following result shows, the former can be regarded as the $\sup$-completion of the latter, that is $\Mf_i(X)$ is dense in $\Mf(X)$.

\begin{proposition} For every $\nu\in\Mf(X)$ there exists an increasing sequence $\seq\nu\subset\Mf_i(X)$ such that $\nu = \sup\nu_n$.
\end{proposition}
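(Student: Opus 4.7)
The plan is to exhaust $\supp\nu$ by finite subsets while letting the finite multiplicities grow. Set $S := \supp\nu$, which is a compact (hence separable) subset of $X$, and decompose $S = S_i \sqcup S_\ess$ into its isolated points and its accumulation points. Enumerate $S_i = \{y_1, y_2, \ldots\}$ (possibly a finite list), and pick a sequence $\{z_1, z_2, \ldots\}$ dense in $S_\ess$ (possibly empty).

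For each $n \in \IN$ define $\nu_n \in \Mf_i(X)$ to be supported on the finite set $\{y_j : 1 \leq j \leq n\} \cup \{z_j : 1 \leq j \leq n\}$ by
$$\nu_n(y_j) := \min(\nu(y_j), n) \quad \text{for } 1 \leq j \leq n, \qquad \nu_n(z_j) := n \quad \text{for } 1 \leq j \leq n.$$
Since $z_j \in S_\ess$ forces $\nu(z_j) = \infty$, we have $\nu_n \leq \nu$ pointwise, and clearly $\nu_n \leq \nu_{n+1}$. Each $\nu_n$ has finite support and $\IN_0$-valued entries, so it does lie in $\Mf_i(X)$.

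It remains to show $\nu = \sup_n \nu_n$ in $\Mf(X)$. Let $\tau \in \Mf(X)$ satisfy $\tau \geq \nu_n$ for every $n$. For $y_j \in S_i$, letting $n \to \infty$ yields $\tau(y_j) \geq \nu(y_j)$. For each $z_j$, the bound $\tau(z_j) \geq n$ valid for $n \geq j$ gives $\tau(z_j) = \infty$. For an arbitrary $w \in S_\ess$ not of the form $z_j$, density of $\{z_j\}$ in $S_\ess$, together with $\{z_j\} \subset \supp\tau$ and the closedness of $\supp\tau$, places $w \in \supp\tau$ as an accumulation point of $\supp\tau$; since $\tau \in \Mf(X)$ we then have $\tau(w) = \infty = \nu(w)$. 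Hence $\tau \geq \nu$.

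The only mildly subtle point is this final step: the \emph{pointwise} supremum of the $\nu_n$ may genuinely fail to lie in $\Mf(X)$, since its support need not be closed (for instance, when $\nu$ is $\infty$ on a convergent sequence together with its limit, the pointwise sup is $0$ at the limit point). The supremum taken in $\Mf(X)$ automatically closes the support and forces accumulation points to carry the value $\infty$, which is exactly how $\nu$ recovers the limit points that no individual $\nu_n$ can see.
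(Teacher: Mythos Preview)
Your proof is correct and follows essentially the same approach as the paper's: approximate from below by finitely supported functions hitting a dense sequence in the essential part with value $n$, and handle the isolated points separately. Your version is in fact slightly more careful than the paper's, which assigns $\nu_n(x)=\nu_i(x)$ on \emph{all} of $Z$ and $\nu_n(x)=n$ on \emph{all} of $Y$ at every stage---this tacitly assumes the set of isolated points of $\supp\nu$ is finite, whereas your enumeration $\{y_1,\ldots,y_n\}$ together with the cap $\min(\nu(y_j),n)$ works regardless. Your closing remark on why the supremum must be taken in $\Mf(X)$ rather than pointwise is also a useful clarification not present in the original.
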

\begin{proof} Let $\seq x\subset\supp\nu_\ess$ be a dense sequence,
$Y\subset\supp\nu_i$ be such that $\nu_i(y) = \infty$ for any $y\in Y$, $Z :=
\supp\nu_i\smallsetminus Y$ and set
		$$\nu_n(x) : =\begin{cases}
		\nu_i(x) & x\in Z\\
		n & x\in\{x_1,\ldots,x_n\}\cup Y\\
		0	&\text{otherwise}
	\end{cases},\qquad\forall x\in X.$$
	Then $\nu_n\in\Mf_i(X)$ and $\nu_n\leq\nu$ for every $n\in\IN$. Suppose that
	$\mu\in\Mf(X)$ is such that $\nu_n\leq\mu$ for any $n\in\IN$, then
	$\supp\nu_n\subset\supp\mu$ for all $n\in\IN$ and by the closedness of the
	supports and the density of $\seq x$ in $\supp\nu_\ess$ it follows that
	$\supp\nu\subset\supp\mu$. This inclusion implies that $\nu_\ess\leq\mu_\ess$,
	while $\nu_i\leq\mu$ by construction of the sequence $\seq\nu$, whence $\nu =
	\nu_i+\nu_\ess\leq\mu$.
\end{proof}

Observe that any semigroup $\Mf(X)$ can be described as a quotient of countably supported functions on $X$ taking values in $\tilde\IN_0$. The equivalence relation is provided by checking that two functions agree in value on the isolated points and have the same closure of the accumulation points, where the functions take value $\infty$. If $\Lf(X)$ denotes the set of all such countably supported functions over $X$ and $\sim$ is said equivalence relation, then
	$$\Mf(X)\cong\Lf(X)/\sim.$$
An element $f$ of $\Lf(X)$ can be represented as a formal sum
	$$f = \sum_{k=1}^\infty n_k\delta_{x_k},$$
where $\seq x\subset X$ is a sequence of points from $X$ and $\{n_k\}_{k\in\IN}\subset\tilde\IN_0$ is such that each $n_k = \infty$ whenever $x_k$ is an accumulation point and $\delta_{x_k}$ is the function that takes value 1 on $x_k$ and $0$ everywhere else.

The image of $f$ under $\pi:\Lf(X)\to\Mf(X)$ can then be represented as the formal sum
	$$\pi(f) = \sum_{i=1}^\infty n_{k_i}\delta_{x_{k_i}} + \omega_C$$
where 
$C$ is the closure of all the accumulation points of the sequence $\seq x$, 
$$\omega_C(x) := \begin{cases}\infty & x\in C\\0 & x\not\in C.\end{cases}$$
and the sum is on the isolated points only.

The topology $\tau_X$ of the space $X$ can be recovered from the knowledge of $\Mf(X)$ as follows. For every two multiplicity functions $f,g\in {\rm Mf}(X)$ we say that $f$ is $\tau$-equivalent to $g$, in symbols $f\sim_\tau g$, if $\supp f=\supp g$. It is easy to see that the quotient
	$$\Tau(X):=\Mf(X)/\sim_\tau$$
is in a bijective correspondence with all the closed subsets of $X$ and hence with the topology $\tau_X$ on $X$. The set $\Tau(X)$ can also be identified with the class of those functions in $\Mf(X)$ whose range is in the set $\{0,\infty\}$. Such functions have the absorption property
	$$\omega + f = \omega$$
for any $f\in\Mf(X)$ such that $\supp f\subset\supp\omega$, and the stability property
	$$n\omega = \omega,\qquad\forall n\in\tilde\IN,$$
or just $\omega + \omega = \omega$.
All these properties characterize such elements which are in a one-to-one correspondence with closed subsets.

It is easy to see that  Cuntz homology as defined in this section provides a complete invariant for compact Hausdorff spaces. Formally we have the following

\begin{theorem}
Let $X$ and $Y$ be compact metrizable spaces. Then $WW(C(X), \CC)$ is isomorphic to $WW(C(Y), \CC)$ if and only if $X$ and $Y$ are homeomorphic. 
\end{theorem}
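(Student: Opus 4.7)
The easy direction is immediate from the functoriality of $WW(\_,\_)$: a homeomorphism $X \to Y$ induces a $*$-isomorphism $C(Y)\cong C(X)$ and hence an isomorphism $WW(C(X),\CC)\cong WW(C(Y),\CC)$.

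For the converse, I would first use the preceding corollary to transport the hypothesis into a partially ordered monoid isomorphism $\Phi : \Mf(X) \to \Mf(Y)$. The key observation is that the sub-poset $\Tau(X)$ can be recovered from the algebraic structure alone: by the discussion preceding the theorem, $\omega \in \Mf(X)$ belongs to $\Tau(X)$ precisely when $\omega + \omega = \omega$, a condition manifestly preserved by any monoid isomorphism. Thus $\Phi$ restricts to an order isomorphism $\Phi|_{\Tau(X)} : \Tau(X) \to \Tau(Y)$, and since $\Tau(X)$ is order isomorphic via support to $(\mathrm{Closed}(X),\subseteq)$, this yields a lattice isomorphism $\Psi : \mathrm{Closed}(X) \to \mathrm{Closed}(Y)$. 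Because $X$ and $Y$ are Hausdorff, the atoms of these lattices are exactly the singletons, and every order isomorphism preserves atoms; hence $\Psi$ induces a well-defined bijection $f : X \to Y$ by $\Psi(\{x\}) = \{f(x)\}$. Moreover $\Psi$ preserves arbitrary suprema (being an isomorphism of complete lattices), and $C = \bigvee_{x \in C}\{x\}$ in $\mathrm{Closed}(X)$, so $\Psi(C) = \overline{f(C)}$ for every closed $C \subseteq X$.

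It then remains to show $f$ is a homeomorphism. Given a closed $C \subseteq X$ and $x \in X \smallsetminus C$, one has $C \subsetneq C \cup \{x\}$, whence $\Psi(C) \subsetneq \Psi(C) \cup \{f(x)\}$ and so $f(x) \notin \Psi(C) = \overline{f(C)}$. Combining $f(X \smallsetminus C) \subseteq Y \smallsetminus \overline{f(C)}$ with $f(C) \subseteq \overline{f(C)}$ forces $f(C) = \overline{f(C)}$, so $f$ sends closed sets to closed sets; applying the same argument to $\Psi^{-1}$ yields continuity of $f^{-1}$, and $f$ is a homeomorphism. The main obstacle, in my view, lies in the algebraic recovery of $\Tau(X)$ from the pomonoid $\Mf(X)$ and the verification that $\Psi$ preserves arbitrary joins in the lattice of closed sets; once these points are in place the rest is routine point-set topology.
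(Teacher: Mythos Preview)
Your argument is correct and shares the paper's central insight: the additive idempotents of $\Mf(X)$ are exactly the functions $\omega_C$ for closed $C\subseteq X$, so any isomorphism $\Phi$ carries the closed-set lattice of $X$ onto that of $Y$. The execution differs in one respect. The paper extracts the point-bijection $f$ from the \emph{minimal non-zero} elements $\delta_x$ of $\Mf(X)$ (value $1$ at $x$, zero elsewhere) and links $f$ to the closed-set bijection $F$ via the absorption identity $\omega_C+\delta_x=\omega_C\iff x\in C$, which yields $f(C)\subseteq F(C)$ directly; symmetry under $\Phi^{-1}$ then forces $f(C)=F(C)$. You instead stay entirely inside the idempotent sublattice $\Tau(X)$, recovering $f$ from its atoms $\omega_{\{x\}}$ and invoking preservation of arbitrary joins to obtain $\Psi(C)=\overline{f(C)}$. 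Your route is a touch more self-contained in that it never leaves the lattice of idempotents, while the paper's absorption argument avoids the appeal to completeness and arbitrary suprema. One small slip in your last line: applying the argument to $\Psi^{-1}$ shows that $f^{-1}$ is a closed map, hence gives continuity of $f$; continuity of $f^{-1}$ you already have from the fact that $f$ itself is closed.
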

\begin{proof}
We only need to show that $\Mf(X)\cong\Mf(Y)$ via $\Phi : \Mf(X) \to \Mf(Y)$  implies the existence of a homeomorphism between $X$ and $Y$. 
To this end observe that an element in  $\Mf(X)$ is minimal and non-zero if and only if it is of the form $\delta_x$ for $x \in X$. Since an isomorphism preserves this property there exists a bijection $f: X \to Y$ such that $\Phi (\delta_x ) = \delta_{f(x)}$ for all $x \in X$. 

The elements $\omega  \in \Mf(X)$ which are additive idempotents i.e. satisfy $\omega + \omega = \omega$ are precisely the ones of the form 
$\omega_C$ for some closed subset $C \subset X$. This property is preserved under $\Phi$ so that we obtain a bijection $F$ between the closed sets of $X$ and the closed sets of $Y$. Since $\omega_C + \delta_x = \omega_C$ if and only if $x \in C$  we find $f(c) \in F(C)$ for all $c \in C$ i.e. $f(C) \subset F(C)$. Since the maps from $Y$ to $X$ and from the closed subsets of $Y$  to the closed subsets of $X$ induced by $\Phi^{-1}$ are $f^{-1}$ and $F^{-1}$ respectively we obtain $f(C)=F(C)$ and the same for the inverses. Thus, $f$ is a homeomorphism.
\end{proof}


\section{\label{sec:classification}Classification Results}

In this section we characterize the possible isomorphism between any two unital simple and stably finite C*-algebras in terms of the Bivariant Cuntz semigroup. In particular, this class contains the set of unital AF-algebras classified by Elliot and the set of unital AI-algebras, classified by Ciuperca and Elliott \cite{ceai}, among others.

Recall that for every element $\Phi\in WW(A,B)$, there exists a
c.p.c. order zero map $\phi:A\to B\otimes K$ such that
$[\phi\otimes\id_K]=\Phi$, and that we say that $\Phi\in WW(A,B)$ is invertible
if there exists a c.p.c. order zero map $\psi:B\otimes K\to A\otimes K$ such
that $\psi\circ\phi\sim\id_{A\otimes K}$ and $\phi\circ\psi\sim\id_{B\otimes
K}$, for any representative $\phi\in\Phi$. As in the case of K-theory, where the
unordered $K_0$-group is only capable of capturing stable isomorphisms between
AF-algebras, this notion of invertibility may present the same sort of limitations when is used to classify C*-algebras. We provide below an example that shows this limitation.

\begin{example}\label{ex:1st} Let $n, m > 0$ be natural numbers. By Example \ref{Ex:RecoverCuntz}, one computes that
$WW(M_n, M_m)=WW(M_m, M_n)=\mathbb{N}_0\cup\{\infty\}$. These semigroups contain invertible elements, namely $1\in\IN_0\cup\{\infty\}$. However, $M_n$ and $M_m$ are stably isomorphic, i.e. $M_n\otimes K\cong M_m\otimes K$ for any $m,n\in\IN_0$, but isomorphic only if $n=m$.
\end{example}

This simple example shows that, in order to capture isomorphism, a stricter
notion of invertibility is required. As a guiding principle, we have Elliott's
classification theory of AF-algebras through their \emph{dimension groups}, that
is the collection of the \emph{ordered} $K_0$-groups, its scale, and the class
of the unit of the algebra in the unital case.

With the following result we establish that, in the unital and stably finite case, a pair of c.p.c. order zero maps which are invertible up to Cuntz
equivalence are Cuntz equivalent to their support $*$-homomorphisms. Recall, before it, that for any simple and stably finite \Cs-algebra $A$
, one has an embedding of the Murray-von Neumann
semigroup $V(A)$ into $\Cu(A)$ (cf. \cite{apt2009})

\begin{proposition}\label{prop:cpctohom} Let $A,B$ be unital and finite \Cs-algebras. If $\phi:A\to B$ and $\psi:B\to A$ are two c.p.c.
order zero maps such that $\psi\circ\phi\sim \id_A$ and
$\phi\circ\psi\sim\id_B$, then there are unital $*$-homomorphisms $\pi_\phi:A\to
B$ and $\pi_\psi:B\to A$ such that
	\begin{enumerate}[{\rm (i)}]
		\item $[\pi_\phi] = [\phi]$ and $[\pi_\psi] = [\psi]$;
		\item $\pi_\psi\circ\pi_\phi \sim \id_A$ and
		$\pi_\phi\circ\pi_\psi\sim\id_B$.
	\end{enumerate}
\end{proposition}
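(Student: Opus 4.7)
The plan is to replace $\phi$ (and symmetrically $\psi$) by a Cuntz-equivalent unital $*$-homomorphism, and then deduce (ii) from the compatibility of Cuntz equivalence with composition established in Proposition \ref{prop:compositions}.

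First I will establish that $[\phi(1_A)]=[1_B]$ in $W(B)$. Evaluating at $1_B$ the sequence in $M_\infty(B)$ witnessing $\phi\circ\psi\sim\id_B$ yields $\phi(\psi(1_B))\sim 1_B$ in $W(B)$. Since $\psi$ is c.p.c.\ we have $\psi(1_B)\le 1_A$, and the complete positivity of $\phi$ gives $\phi(\psi(1_B))\le \phi(1_A)\le 1_B$, hence $[1_B]\le[\phi(1_A)]\le[1_B]$. By symmetry, $[\psi(1_B)]=[1_A]$ in $W(A)$.

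Next I invoke the machinery from the Compact Elements subsection. Setting $h:=\phi(1_A)$, the identity $[h]=[1_B]$ shows that $[h]$ is compact in $\Cu(B)$, so by \cite[Theorem 3.5]{bc} there is a projection $p\in B$ with $h\sim p$ and $h^{1/2}p=h^{1/2}$. Repeating the computation from the proof of the compact-elements theorem with $x_n:=(h+\tfrac1n)^{-1}h^{1/2}$, one obtains $\norm{x_n^*\phi(a)x_n-\pi_\phi(a)}\to 0$, where $\pi_\phi$ is the support $*$-homomorphism of $\phi$ from Corollary \ref{cor:structure}. Because each $x_n^*\phi(a)x_n$ lies in $B$, the limits $\pi_\phi(a)$ do too, so $\pi_\phi\colon A\to B$ is a genuine $*$-homomorphism, Cuntz equivalent to $\phi$, and $\pi_\phi(1_A)=p$.

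The crucial remaining point is to upgrade $[p]=[1_B]$ to the equality $p=1_B$. From $1_B\precsim p$ in $W(B)$ pick $x\in B$ with $\norm{xpx^*-1_B}<1$, so that $d:=xpx^*$ is positive and invertible in $B$, and set $y:=px^*d^{-1/2}$. A direct computation gives $y^*y=1_B$ and $y=py$, whence $yy^*=p(yy^*)p\in pBp$. Finiteness of $B$ forces $yy^*=1_B$, while $yy^*\in pBp$ with $\norm{yy^*}\le 1$ gives $yy^*\le p$; combining, $1_B\le p\le 1_B$, i.e.\ $p=1_B$, so $\pi_\phi$ is unital. The same construction applied to $\psi$ delivers a unital $*$-homomorphism $\pi_\psi\colon B\to A$ with $[\pi_\psi]=[\psi]$, proving (i). For (ii), Proposition \ref{prop:compositions} yields $\pi_\psi\circ\pi_\phi\sim\psi\circ\pi_\phi\sim\psi\circ\phi\sim\id_A$ and symmetrically $\pi_\phi\circ\pi_\psi\sim\id_B$. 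The main technical obstacle is the unitality step, where the Cuntz-theoretic relation $p\sim 1_B$ has to be converted into the genuine equality $p=1_B$ via an isometry argument that crucially uses finiteness of $B$.
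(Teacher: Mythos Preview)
Your overall strategy is sound, and your first paragraph (establishing $[\phi(1_A)]=[1_B]$) as well as your third paragraph (the isometry argument upgrading $[p]=[1_B]$ to $p=1_B$) are correct. The gap is in the second paragraph.

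The appeal to \cite[Theorem~3.5]{bc} requires $B$ to be \emph{stably} finite, whereas the proposition only assumes $B$ is finite; there exist finite unital \Cs-algebras that are not stably finite, so this is a genuine mismatch of hypotheses. Moreover, even granting the conclusion of \cite{bc}, your subsequent claims hinge on the support projection $p_h$ lying in $B$ and on the norm convergence $x_n^*\phi(a)x_n\to\pi_\phi(a)$ with $x_n=(h+\tfrac1n)^{-1}h^{1/2}$; both require $0$ to be isolated in $\sigma(h)$, which you have not established independently.

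The paper avoids this detour entirely by extracting invertibility of $h=\phi(1_A)$ directly from the relation you already proved. From $1_B\precsim h$ one obtains $b_n\in B$ with $b_n^*hb_n\to 1_B$; hence $b_n^*hb_n$ is eventually invertible, so $b_n$ is left-invertible, and finiteness of $B$ makes $b_n$ invertible, whence $h=(b_n^*)^{-1}(b_n^*hb_n)b_n^{-1}$ is invertible. (Symmetrically $\psi(1_B)$ is invertible in $A$.) Once $h$ is invertible, $\pi_\phi(a)=h^{-1/2}\phi(a)h^{-1/2}$ is a unital $*$-homomorphism $A\to B$ with $[\pi_\phi]=[\phi]$, and your third paragraph becomes unnecessary. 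Part (ii) then follows exactly as you indicate via Proposition~\ref{prop:compositions}. In short: replace the compactness/\cite{bc} step by this two-line invertibility argument and your proof goes through under the stated (mere) finiteness hypothesis.
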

\begin{proof} By \sth, we find positive elements $h_\phi,h_\psi$ and
$*$-homo\-mor\-phisms $\pi_\phi,\pi_\psi$ such that $\phi = h_\phi\pi_\phi$ and
$\psi = h_\psi\pi_\psi$. Evaluating on the unit of $A$ and $B$ respectively, we
get
		$$h_\psi^{\frac12}\pi_\psi(h_\phi)h_\psi^{\frac12}\sim_\Cu 1_A
		\qquad\text{and}\qquad
		h_\phi^{\frac12}\pi_\phi(h_\psi)h_\phi^{\frac12}\sim_\Cu 1_B.$$
	From the first relation on the left we get, by definition of $\cue$, the
	existence of a sequence $\seq x\subset A$ such that
	$x_nh_\psi^{\frac12}\pi_\psi(h_\phi)h_\psi^{\frac12}x_n^*$ converges to $1_A$
	in norm, and therefore
	$x_nh_\psi^{\frac12}\pi_\psi(h_\phi)h_\psi^{\frac12}x_n^*$ is eventually
	invertible. Hence, there exist $c_n\in A$ such that
	$x_nh_\psi^{\frac12}\pi_\psi(h_\phi)h_\psi^{\frac12}x_n^*c_n = 1_A$ for
	sufficiently large values of $n$, which shows that $x_n$ is right invertible.
	Since $A$ is finite, it follows that the sequence $\seq x$ is
	eventually invertible, and therefore
	$h_\psi^{\frac12}\pi_\psi(h_\phi)h_\psi^{\frac12}x_n^*c_nx_n = 1_A$, which
	shows that $h_\psi$ is also invertible for the same argument as before.
	Similarly, one also deduce the invertibility of $h_\phi$, so that $\pi_\phi$
	and $\pi_\psi$ satisfy (i) and (ii).
	
	Now set $p=\pi_\phi(1_A)$ and $q =
	\pi_\psi(1_B)$. Since $\pi_\psi(p)\sim_\Cu 1_A$ and $\pi_\phi(q)\sim_\Cu 1_B$,
	finiteness of $A$ and $B$ implies $\pi_\phi(q) = 1_B$ and $\pi_\psi(p)
	= 1_A$. Now $1_A-q$ is a positive element in $A$, but
		$$\pi_\phi(1_A-q) = p - 1_B \leq 0,$$
	which is possible only if $p= 1_B$. Similarly, one finds that $q=1_A$, and
	therefore $\pi_\phi$ and $\pi_\psi$ are unital.
\end{proof}

In order to lift an invertible element in the bivariant Cuntz semigroup, it
suffices to show the existence of representatives which are $*$-homomorphisms,
but in a strict sense. These considerations motivate the following definition.

\begin{definition}[Strict invertibility] An element $\Phi\in WW(A,B)$ is
strictly invertible if there exist c.p.c. order zero maps $\phi:A\to B$ and
$\psi:B\to A$ such that
	\begin{enumerate}[i.]
		\item $[\phi\otimes\id_K] = \Phi$;
		\item $\psi\circ\phi \sim \id_A$ and $\phi\circ\psi\sim\id_B$.
	\end{enumerate}
\end{definition}

Observe that every $*$-isomorphism between two \Cs-algebras $A$ and $B$ induces
a strictly invertible element in $WW(A,B)$. Hence, if there are no strictly
invertible elements in $WW(A,B)$, then $A$ and $B$ cannot be isomorphic.

\begin{definition}[Strict WW-equivalence] Two \Cs-algebras $A$ and $B$ are
strictly $WW$-equivalent if there exists a strictly invertible element in
$WW(A,B)$.
\end{definition}

\begin{remark} The notions of strictly invertible elements and strict
$WW$-equivalence can be reformulated for the bivariant semigroup $W$ as well in
the same formal terms.
\end{remark}

Observe that any c.p.c. order zero map $\phi:A\to B$ induces an element in
$WW(A,B)$ through the class of its ampliation, viz. $[\phi\otimes\id_K]$. To
make tangible contact with the current theory of classification, we also
introduce the next definition.

\begin{definition}[Scale of $WW$] The scale $\Sigma(WW(A,B))$ of the bivariant
Cuntz semigroup $WW(A,B)$ is the set of all classes of c.p.c. order zero maps
that arise from c.p.c. order zero maps from $A$ to $B$ through
$\infty$-ampliation, i.e. the set
		$$\Sigma(WW(A,B)) = \{[\phi\otimes\id_K]\in WW(A,B)\ |\ \phi:A\to B\text{
		c.p.c. order zero}\}.$$
\end{definition}

The above definition of scale for the bivariant Cuntz semigroup can be used to
characterize the strictly invertible elements in $WW(A,B)$ as those invertible
element in the scale of $WW(A,B)$, which have an inverse in $\Sigma(WW(B,A))$.

\begin{example} Let $B$ be a \Cs-algebra. Since any c.p.c. order zero map $\phi
: \CC \to B$ is generated by a positive element of $B$, i.e.
		$$\phi(z) = zh_\phi,\qquad\forall z\in\CC,$$
	for some positive element $h_\phi\in B^+$, one can identify the $\Sigma(WW(\CC,B))$ with the Cuntz equivalence classes of the elements of $B$
	embedded in $B\otimes K$ through a minimal projection $e$ of $K$. Apart from a
	suprema-completion, $\Sigma(WW(\CC,B))$ coincides with the notion of scale for
	the ordinary Cuntz semigroup introduced in \cite{ptww}.
\end{example}

We will now give a proof by examples for the classification of UHF algebras,
starting by revisiting Example \ref{ex:1st}.

\begin{example} Let $0 < n \leq m$ be natural numbers, and consider the matrix
algebras $M_n$ and $M_m$. We claim that there is a strictly invertible element
in $WW(M_n, M_m)$ if and only if $n = m$. One
direction is obvious; therefore, suppose that $\Phi\in WW(M_n,M_m)$ is a strictly
invertible element. Then, there are c.p.c. order zero maps $\phi : M_n \to M_m$
and $\psi : M_m \to M_n$ such that $\Phi = [\phi\otimes\id_K]$ and
$\psi\circ\phi\sim 1_{M_n}$, $\phi\circ\psi\sim 1_{M_m}$. By Proposition
\ref{prop:cpctohom}, we find unital $*$-homomorphisms $\pi_\phi:M_n\to M_m$ and
$\pi_\psi:M_m\to M_n$, but such a $\pi_\psi$ can only exist if $m=n$. \emph{En
passant} we observe that, under these circumstances, both $\pi_\phi$ and
$\pi_\psi$ are surjective and hence $*$-isomorphisms.
\end{example}

\begin{example}Let $A$ and $B$ be UHF algebras. As in the above example, one direction is trivial, so let us show the converse. To this end, following the above proof with $A$ and $B$ instead of
matrix algebras, one gets unital injective $*$-homomorphisms $\pi_\phi:A\to
B$ and $\pi_\psi:B\to A$. These only exist if $A$ and $B$ have the same
supernatural number by Glimm's classification result of UHF algebras
\cite{glimm}; hence, the desired implication follows. 
\end{example}

In particular, the above example shows the following.
\begin{proposition}\label{prop:ClaUHF} Two UHF algebras $A$ and $B$ are isomorphic if and only if there
is a strictly invertible element in $WW(A,B)$.
\end{proposition}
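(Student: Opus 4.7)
The plan is to formalize the argument given in the example immediately preceding the proposition. The forward direction is immediate: any $*$-isomorphism $\alpha: A \to B$ is in particular a c.p.c. order zero map, so the class $[\alpha \otimes \id_K] \in WW(A,B)$ is strictly invertible, with strict inverse $[\alpha^{-1} \otimes \id_K] \in \Sigma(WW(B,A))$, since $\alpha^{-1} \circ \alpha = \id_A$ and $\alpha \circ \alpha^{-1} = \id_B$.

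For the reverse direction, suppose that $\Phi \in WW(A,B)$ is strictly invertible. By definition, there exist c.p.c. order zero maps $\phi: A \to B$ and $\psi: B \to A$ representing $\Phi$ and its strict inverse respectively, satisfying $\psi \circ \phi \sim \id_A$ and $\phi \circ \psi \sim \id_B$. Since UHF algebras are unital and stably finite (in particular finite), the hypotheses of Proposition \ref{prop:cpctohom} are met, and we may replace $\phi$ and $\psi$ by their support $*$-homomorphisms $\pi_\phi: A \to B$ and $\pi_\psi: B \to A$, which are unital and satisfy $\pi_\psi \circ \pi_\phi \sim \id_A$ and $\pi_\phi \circ \pi_\psi \sim \id_B$.

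Finally, since $A$ and $B$ are simple, the unital $*$-homomorphisms $\pi_\phi$ and $\pi_\psi$ must be injective, providing mutual unital embeddings between $A$ and $B$. By Glimm's classification of UHF algebras \cite{glimm}, a unital embedding $A \hookrightarrow B$ exists if and only if the supernatural number of $A$ divides that of $B$; the existence of embeddings in both directions therefore forces the two supernatural numbers to coincide, and hence $A \cong B$. The main point of the argument is really Proposition \ref{prop:cpctohom}, which does the work of passing from a pair of Cuntz-invertible c.p.c.\ order zero maps to an actual pair of unital $*$-homomorphisms; once this is in place, the remainder of the proof is a direct appeal to the classical classification of UHF algebras.
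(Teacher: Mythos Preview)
Your proof is correct and follows essentially the same approach as the paper: you apply Proposition~\ref{prop:cpctohom} to replace the Cuntz-invertible c.p.c.\ order zero maps by unital $*$-homomorphisms, observe injectivity from simplicity, and then invoke Glimm's classification to conclude that the supernatural numbers agree. This is exactly the argument the paper gives in the example immediately preceding the proposition.
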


We finish this section extending Proposition \ref{prop:ClaUHF} to all unital stably finite
\Cs-algebras. Recall that it includes all $AF$-algebras and $AI$-algebras, as mentioned before. We first
give a lemma and recall Elliott's intertwining argument \cite{elliott10}.

\begin{lemma}\label{lem:cuau} Let $A$ and $B$ be unital \Cs-algebras, $B$ finite. Two unital $*$-homo\-mor\-phisms $\pi_1,\pi_2:A\to B$ are Cuntz
equivalent if and only if they are approximately unitarily equivalent.
\end{lemma}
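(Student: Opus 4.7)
One direction is immediate from Proposition \ref{prop:au}: approximate unitary equivalence implies Cuntz equivalence for any c.p.c.\ order zero maps, so in particular for unital $*$-homomorphisms. The substance of the lemma is therefore to promote a witnessing sequence for Cuntz equivalence to a sequence of unitaries, which is exactly the place where the finiteness of $B$ enters.

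The plan is as follows. Assume $\pi_1\sim\pi_2$, so that there exists a sequence $\seq b\subset B$ with $\norm{b_n^*\pi_2(a)b_n-\pi_1(a)}\to 0$ for all $a\in A$. Evaluating at $a=1_A$ and using unitality of $\pi_1$ and $\pi_2$ yields $\norm{b_n^*b_n-1_B}\to 0$, so for $n$ large enough the positive element $b_n^*b_n$ is invertible, which makes $b_n$ left invertible in $B$. Here is the decisive step: since $B$ is finite, every left invertible element is invertible, so each $b_n$ (for large $n$) is invertible.

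Now perform polar decomposition $b_n=u_n|b_n|$ with $|b_n|=(b_n^*b_n)^{1/2}$ and $u_n=b_n|b_n|^{-1}$. Then $u_n^*u_n=1_B$, and finiteness of $B$ forces $u_n$ to be a unitary. Functional calculus on $b_n^*b_n\to 1_B$ gives $\norm{|b_n|-1_B}\to 0$, and therefore for each fixed $a\in A$,
\begin{align*}
\norm{b_n^*\pi_2(a)b_n-u_n^*\pi_2(a)u_n}
    &=\norm{|b_n|u_n^*\pi_2(a)u_n|b_n|-u_n^*\pi_2(a)u_n}\\
    &\leq\bigl(\norm{|b_n|}+1\bigr)\norm{a}\,\norm{|b_n|-1_B}\to 0.
\end{align*}
Combining with the hypothesis yields $\norm{u_n^*\pi_2(a)u_n-\pi_1(a)}\to 0$ for every $a\in A$, and applying $u_n(\,\cdot\,)u_n^*$ (which is isometric) converts this into $\norm{u_n\pi_1(a)u_n^*-\pi_2(a)}\to 0$, i.e.\ $\pi_1\au\pi_2$.

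The main obstacle is precisely the step where we pass from a left inverse to a two-sided inverse; without the finiteness hypothesis on $B$, the elements $b_n$ could be isometries that are not unitaries (as with the unilateral shift), and no polar decomposition inside $B$ into a unitary times a positive element would be available. Everything else in the argument is routine functional calculus together with unitality.
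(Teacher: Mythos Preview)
Your proof is correct and follows essentially the same route as the paper's: evaluate the witnessing sequence at $1_A$ to get $b_n^*b_n\to 1_B$, use finiteness of $B$ to upgrade to invertibility of $b_n$, then replace $b_n$ by the unitary part $u_n$ of its polar decomposition and check that $\||b_n|-1_B\|\to 0$ does the rest. The paper's version is terser (it does not spell out the estimate $\|hxh-x\|\leq(\|h\|+1)\|x\|\|h-1\|$), and your second appeal to finiteness for $u_n$ is redundant since $u_n=b_n|b_n|^{-1}$ is already invertible, but these are cosmetic differences.
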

\begin{proof} One implication is obvious, so assume that $\pi_1\sim\pi_2$. By assumptions, there
exists a sequence $\seq x\subset B$ such that
$\norm{x_n^*\pi_1(a)x_n-\pi_2(a)}\to0$ for any $a\in A$, which, in particular, implies that
	$$x_n^*x_n\to 1.$$
Now, by finiteness of $B$, one obtains that $x_n$ is eventually
invertible. Hence, without loss of generality and forgetting about the
first few non-invertible elements, if any, one can replace $x_n$ by the
unitaries $u_n$ coming from the polar decomposition $x_n = u_n|x_n|$ of each invertible $x_n$. The sequence $\seq u$ then witnesses the sought approximate unitary equivalence between $\pi_1$ and $\pi_2$.
\end{proof}

\begin{theorem}[Elliott \cite{elliott10}]\label{thm:intarg} Let $A$ and $B$ be
separable, unital \Cs-algebras. If there are unital $*$-homomorphisms
$\pi_1:A\to B$ and $\pi_2:B\to A$ such that $\pi_2\circ\pi_1\au\id_A$ and
$\pi_1\circ\pi_2\au\id_B$, then $A$ and $B$ are isomorphic.
\end{theorem}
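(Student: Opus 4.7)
The proof follows Elliott's classical approximate intertwining technique. The strategy is to inductively construct sequences of unitaries $\{u_n\}\subset U(A)$ and $\{v_n\}\subset U(B)$ so that the unital $*$-homomorphisms
\[
\phi_n := \Ad(v_n\cdots v_1)\circ\pi_1, \qquad \psi_n := \Ad(u_n\cdots u_1)\circ\pi_2
\]
form point-norm Cauchy sequences and converge to mutually inverse unital $*$-iso\-mor\-phisms $\phi\colon A\to B$ and $\psi\colon B\to A$.

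Fix countable dense sequences $\{a_n\}\subset A$ and $\{b_n\}\subset B$. The inductive step proceeds as follows. Assuming $\phi_{n-1}$ and $\psi_{n-1}$ have been built, the compositions $\psi_{n-1}\circ\phi_{n-1}$ and $\phi_{n-1}\circ\psi_{n-1}$ remain $\au$-equivalent to $\id_A$ and $\id_B$ respectively, since composition with inner automorphisms preserves approximate unitary equivalence. This allows one to choose $u_n\in U(A)$ that witnesses $\psi_{n-1}\circ\phi_{n-1}\au\id_A$ to tolerance $2^{-n}$ on the finite subset $\{a_1,\ldots,a_n\}$, and likewise $v_n\in U(B)$ to tolerance $2^{-n}$ on $\{b_1,\ldots,b_n\}$. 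Crucially, both choices must simultaneously guarantee $\norm{\phi_n(a_i)-\phi_{n-1}(a_i)}<2^{-n}$ for $i<n$, and the analogous condition for $\psi_n$, so that the sequences are Cauchy on the dense subsets. This is arranged by exploiting the fact that the unitaries witnessing $\au$-equivalence can be chosen to implement the equivalence to any prescribed precision on any given finite subset.

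Once the construction is complete, one defines $\phi(a):=\lim_n\phi_n(a)$ and $\psi(b):=\lim_n\psi_n(b)$ for $a,b$ ranging over the dense sets, extending by continuity to all of $A$ and $B$. Since the point-norm limit of a sequence of contractive unital $*$-homomorphisms is again a unital $*$-homomorphism, both $\phi$ and $\psi$ are unital $*$-homomorphisms. The inductive estimates then force $\psi(\phi(a_i))=a_i$ for every $i\in\IN$, whence $\psi\circ\phi=\id_A$ by density and continuity, and symmetrically $\phi\circ\psi=\id_B$; hence $\phi$ is the desired $*$-isomorphism.

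The main obstacle is the inductive bookkeeping that must balance two competing requirements at every stage: the new unitary $u_n$ (resp. $v_n$) must sharpen the identity-approximation on the newly enlarged finite subset, yet its conjugation action on $\phi_{n-1}$ (resp. $\psi_{n-1}$) must not undo the approximations achieved at previous stages. This is handled by a geometric summability argument, choosing the tolerance at stage $n$ small enough (of order $2^{-n}$) and then selecting $u_n$ and $v_n$ sufficiently close to $1$ on the relevant finite subsets that the Cauchy property for $\{\phi_n\}$ and $\{\psi_n\}$ is preserved, and the identity-approximations at previous stages survive in the limit.
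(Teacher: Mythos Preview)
The paper does not prove this theorem; it is stated as a known result and attributed to Elliott via the citation \cite{elliott10}, then used as a black box in the proof of Theorem \ref{thm:classification}. Your proposal outlines the standard Elliott approximate intertwining argument, which is indeed the content of the cited reference.

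One remark on the sketch itself: the Cauchy property for $\{\phi_n\}$ is not usually obtained by forcing $v_n$ to be close to $1$ on the relevant finite set (this would conflict with the requirement that $\Ad(v_n)$ correct $\phi_{n-1}\circ\psi_{n-1}$ toward $\id_B$). Rather, in the standard zigzag one enlarges the finite subsets at each stage to include the images $\phi_{n-1}(a_i)$ and $\psi_{n-1}(b_j)$, so that the intertwining relations themselves yield
\[
\phi_n(a_i)\approx\phi_n\circ\psi_{n-1}\circ\phi_{n-1}(a_i)\approx\phi_{n-1}(a_i),
\]
and convergence follows from summability of the errors. Your final paragraph gestures at this bookkeeping but does not quite identify the mechanism; the proof goes through once this is made precise.
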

Combining Proposition \ref{prop:cpctohom} with Lemma \ref{lem:cuau} and
Theorem \ref{thm:intarg}, we get to the following classification result for
unital and stably finite \Cs-algebras in the bivariant Cuntz Semigroup setting.

\begin{theorem}\label{thm:classification} Let $A$ and $B$ be unital, stably
finite \Cs-algebras. There is an isomorphism between $A$ and $B$ if and only if
there exists a strictly invertible element in $WW(A,B)$.
\end{theorem}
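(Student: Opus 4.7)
The ``only if'' direction is immediate: a $*$-isomorphism $\alpha:A\to B$ is in particular a c.p.c. order zero map whose ampliation $\alpha\otimes\id_K$ is invertible in $WW(A,B)$ with inverse $\alpha^{-1}\otimes\id_K$, so $[\alpha\otimes\id_K]$ is strictly invertible by the very definition. For the nontrivial converse, my plan is to reduce strict $WW$-equivalence to a two-sided approximate intertwining of unital $*$-homomorphisms, which then produces an honest $*$-isomorphism via Theorem \ref{thm:intarg}.

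Suppose, then, that $\Phi\in WW(A,B)$ is strictly invertible. Unwinding the definition, one obtains c.p.c. order zero maps $\phi:A\to B$ and $\psi:B\to A$---crucially, with codomains $B$ and $A$ themselves rather than $B\otimes K$ and $A\otimes K$---such that $\psi\circ\phi\sim\id_A$ and $\phi\circ\psi\sim\id_B$. Since $A$ and $B$ are unital and stably finite (hence in particular finite), Proposition \ref{prop:cpctohom} applies and hands over unital $*$-homomorphisms $\pi_\phi:A\to B$ and $\pi_\psi:B\to A$ representing $[\phi]$ and $[\psi]$ respectively, which still satisfy $\pi_\psi\circ\pi_\phi\sim\id_A$ and $\pi_\phi\circ\pi_\psi\sim\id_B$.

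From here the argument is mechanical. Lemma \ref{lem:cuau} upgrades Cuntz equivalence between unital $*$-homomorphisms into a finite unital \Cs-algebra to approximate unitary equivalence, yielding $\pi_\psi\circ\pi_\phi\au\id_A$ and $\pi_\phi\circ\pi_\psi\au\id_B$. Elliott's intertwining theorem (Theorem \ref{thm:intarg}) then produces a $*$-isomorphism $A\cong B$, completing the proof.

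The conceptually decisive step---and the only place where the word \emph{strict} in strict invertibility does any work---is the first one: having representatives $\phi,\psi$ land in $B$ and $A$ themselves is precisely what allows Proposition \ref{prop:cpctohom} to produce unital $*$-homomorphisms, rather than merely a matched pair of c.p.c. order zero maps between the stabilizations $A\otimes K$ and $B\otimes K$. Without this finer datum, as Example \ref{ex:1st} already illustrates, invertibility in $WW(\_,\_)$ alone would recover only a stable isomorphism, not an honest one; everything after that step is a routine chain of invocations.
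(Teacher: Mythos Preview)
Your proof is correct and follows exactly the same route as the paper's own proof: strict invertibility gives c.p.c.\ order zero maps $\phi:A\to B$ and $\psi:B\to A$ witnessing mutual Cuntz inverses, Proposition~\ref{prop:cpctohom} upgrades these to unital $*$-homomorphisms, Lemma~\ref{lem:cuau} converts Cuntz equivalence to approximate unitary equivalence, and Theorem~\ref{thm:intarg} finishes the argument. Your closing paragraph on why \emph{strict} invertibility is essential is a helpful gloss not present in the paper's terse proof.
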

\begin{proof} It is clear that any isomorphism between $A$ and $B$ gives a
strictly invertible element. To prove the converse, assume that
$\Phi\in WW(A,B)$ is a strictly invertible element and that $\phi$ is a
representative of $\Phi$. By Proposition \ref{prop:cpctohom}, one finds unital
$*$-homomorphisms $\pi_1:A\to B$ and $\pi_2:B\to A$ such that
$\pi_2\circ\pi_1\sim\id_A$ and $\pi_1\circ\pi_2\sim\id_B$. Then, Cuntz equivalence is replaced by approximately unitary
equivalence by Lemma
\ref{lem:cuau}, so one gets that $A$ is isomorphic to $B$ by Theorem
\ref{thm:intarg}.
\end{proof}

The well-known Kirchberg-Phillips classification (\cite{KP002,KP00}) says that Kirchberg algebras, i.e. simple separable nuclear and purely infinite \Cs-algebras are isomorphic if and only if they are KK-equivalent.
Our classification theorem \ref{thm:classification} can be regarded as an analogue of this result, in fact goes beyond it in some sense since we do not require any conditions beyond stable finiteness such as nuclearity or simplicity. Moreover, as our examples show any two Kirchberg algebras are Cuntz equivalent, our bivariant Cuntz semigroup appears as exclusively geared to algebras with traces.

	\bibliographystyle{plain}
	\bibliography{refs}
	
\end{document}